\def\N{\mathbb{N}}
\def\R{\mathbb{R}}
\def\C{\mathbb{C}}
\def\Z{\mathbb{Z}}
\def\cL{\mathcal{L}}
\def\cT{\mathcal{T}}
\def\cR{\mathcal{R}}
\def\cX{\mathcal{X}}
\def\cY{\mathcal{Y}}
\def\EE{\mathbb{E}}
\def\MP{\emptyset}
\def\loc{{\mathrm{loc}}}
\def\arg{\mathrm{arg}\,}
\def\sm{\setminus}
\def\bel{\begin{equation}}
\def\eel{\end{equation}}
\def\beq{\begin{eqnarray*}}
\def\eeq{\end{eqnarray*}}
\newcommand{\n}[1]{\|#1\|}
\newcommand{\nn}[2]{\|#1\|_{#2}}
\def\eps{\varepsilon}
\def\ph{\varphi}
\def\Om{\Omega}
\def\om{\omega}
\def\la{\lambda}
\def\al{\alpha}
\def\ga{\gamma}
\def\Ga{\Gamma}
\def\si{\sigma}
\def\Si{\Sigma}
\def\del{\delta}
\def\ov{\overline}
\def\vrh{\varrho}
\def\Rep{\mathrm{Re}\,}
\def\Imp{\mathrm{Im}\,}
\newtheoremstyle{tagged}% name
  {3pt}% Space above
  {3pt}% Space below
  {\itshape}% Body font
  {}% Indent amount
  {\bfseries}% Theorem head font
  {}% Punctuation after theorem head
  { }% Space after theorem head
  {\thmname{#1}~(#3)}% Theorem head spec: name + tag
\newtheorem{prop}{Proposition}[section]
\newtheorem{lemma}[prop]{Lemma}
\newtheorem{remark}[prop]{Remark}
\newtheorem{definition}[prop]{Definition}
\newtheorem{theorem}[prop]{Theorem}
\theoremstyle{tagged}
\newtheorem{assumption}{Assumption}
\newcommand{\assref}[1]{\upshape (\nameref{#1})}
\renewcommand\labelenumi{$(\roman{enumi})$}
\renewcommand\theenumi\labelenumi
\numberwithin{equation}{section}
\newcounter{aufzi}
\newenvironment{aufzi}{\begin{list}{ {\upshape(\alph{aufzi})}}{
        \usecounter{aufzi}
        \topsep1ex
%        \partopsep
        \parsep0cm
        \itemsep1ex
        \leftmargin0.8cm
%        \rightmargin
%        \listparindent
        \labelwidth0.5cm
        \labelsep0.3cm
        %\itemindent-0.3cm
}}
{\end{list}}
\NewDocumentCommand{\norm}{m o}{%
  \lVert #1 \rVert%
  \IfValueT{#2}{_{#2}}%
}
\NewDocumentCommand{\bignorm}{m o}{%
  \bigl\lVert #1 \bigr\rVert%
  \IfValueT{#2}{_{#2}}%
}
\NewDocumentCommand{\Bignorm}{m o}{%
  \Bigl\lVert #1 \Bigr\rVert%
  \IfValueT{#2}{_{#2}}%
}
  \newcommand {\Bigabs}[1]{\Bigl| #1 \Bigr|}
  \newcommand {\idual}[3] {\langle #1, #2 \rangle_{#3}}
  \newcommand {\dual}[2] {\idual{#1}{#2}{} }
  \newcommand {\sect}[1] {\Sigma_{#1}}
  \newcommand {\eins}{\mathbbm 1}
  \newcommand {\sfrac}[2] { {\,{}^{#1}\!\!/\!{}_{#2}\,}} 
  \newcommand {\einhalb} {\sfrac{1}{2}}
\title{A cheap way to closed operator sums}
\author{Benhard H. Haak}
\address{\it Université de Bordeaux\\Institut de Mathématiques de Bordeaux\\351 cours de la Libération\\F -- 33405 Talence\\France}
\email{bernhard.haak@math.u-bordeaux.fr}
\author{Peer Chr. Kunstmann}
\address{\it Karlsruhe Institute of Technology (KIT),
Institute for Analysis\\
Englerstr. 2, D -- 76128 Karlsruhe\\Germany}
\email{peer.kunstmann@kit.edu}
\date{}
\begin{document}
%\maketitle

\dedicatory{Dedicated to Robert Denk on the occasion of his 60th birthday.}

\begin{abstract}
  Let $A$ and $B$ be sectorial operators in a Banach space $X$ of
  angles $\om_A$ and $\om_B$, respectively, where
  $\om_A+\om_B<\pi$. We present a simple and common approach to
  results on closedness of the operator sum $A+B$, based on
  Littlewood-Paley type norms and tools from several interpolation
  theories.  This allows us to give short proofs for the well-known
  results due to Da~Prato-Grisvard and Kalton-Weis. We prove a new
  result in $\ell^q$-interpolation spaces and illustrate it with a
  maximal regularity result for abstract parabolic equations. Our
  approach also yields a new proof for the Dore-Venni result.

\textbf{Mathematics Subject Classification (2020).} 47\,A\,60, 47\,D\,06.

\textbf{Keywords.} operator sums, interpolation, functional calculus,
fractional domain spaces, maximal $L^p$-regularity.
\end{abstract}

\maketitle

\allowdisplaybreaks

\section{Introduction and main idea of the approach}\label{sec:intro}

In this paper we consider the problem of closedness in a Banach space
$X$ of an operator sum of two closed operators $A$ and $B$ with
non-empty resolvent sets whose resolvent operators commute. More
specifically we assume that $A$ and $B$ are sectorial operators in $X$
of angle $\om_A\in[0,\pi)$ and $\om_B\in[0,\pi)$, respectively, with
dense domain and range and that $\om_A+\om_B<\pi$.
We recall that sectorial operators with dense range are injective.

The sum $A+B$ has domain $D(A+B)=D(A)\cap D(B)$, which is a Banach space for
the norm $x\mapsto \n{x}+\n{Ax}+\n{Bx}$, and closedness of $A+B$ is -- via
the open mapping theorem -- equivalent to an estimate
\begin{align}\label{eq:equiv}
 \n{x}+\n{Ax}+\n{Bx}\le C \big(\n{x}+\n{(A+B)x}\big),\quad x\in D(A)\cap D(B). 
\end{align}
A prominent example of such a situation is frequently encountered in
the study of parabolic evolution equations
\begin{align}\label{eq:maxregT}
 u'(t)+A\, u(t)=f(t),\quad t\in(0,T),\qquad u(0)=0,
\end{align}
with $0<T\le\infty$, if one is interested in maximal regularity in the sense
that $f\in L^p(0,T;X)$, say, should imply $u',Au\in L^p(0,T;X)$
as well.  Here $-A$ is the generator of a bounded analytic semigroup
in $X$, hence a sectorial operator of angle $\om_A<\frac\pi2$.  In
order to view this as a problem on operator sums we have to consider
a lifted version $u(\cdot)\mapsto Au(\cdot)$ of $A$ in the space
$L^p(0,T;X)$ with domain $L^p(0,T;D(A))$ where $D(A)$ is
equipped with the graph norm. By abuse of notation we denote this
lifting still by $A$.  The operator $B$ is given by $\frac{d}{dt}$
with domain $\{u\in W^{1,p}(0,T;X): u(0)=0 \}$ in
$L^p(0,T;X)$.  It is clear that $B$ is sectorial of angle
$\om_B=\frac\pi2$ and that resolvents of $A$ and $B$ commute.

For finite $T<\infty$, the mild solution $u$ of \eqref{eq:maxregT} for a
given $f\in L^p(0,T;X)$ satisfies
$\norm{u}_{L^p(0,T;X)} \lesssim \norm{f}_{L^p(0,T;X)}$ and the problem
of maximal regularity then reduces to \eqref{eq:equiv}, i.e., to the
question whether the operator $A+B$ with domain $D(A)\cap D(B)$ is
closed in $L^p(0,T;X)$.  For $T=\infty$, the situation changes a bit
since, for $f\in L^p(0,\infty;X)$ we then only have
$u\in L^p_{\loc}(0,\infty;X)$ for the mild solution to
\eqref{eq:maxregT}. Here, the question whether
$u',Au\in L^p(0,\infty;X)$ reduces to the existence of a constant
$C>0$ with
\begin{align}\label{eq:equiv-hom}
 \n{Au}+\n{Bu}\le C \n{(A+B)u},\quad u\in D(A)\cap D(B). 
\end{align}
This is stronger than \eqref{eq:equiv} and, if $A+B$ is injective, it is
equivalent to boundedness of (an extension of) $A(A+B)^{-1}$ or $B(A+B)^{-1}$.
Boundedness of one of these operators also entails \eqref{eq:equiv}, hence
closedness of $A+B$, and this is the usual way to study closedness of operator sums, which we also follow in this paper.

\noindent In the maximal regularity problem above, $L^p(0,T;X)$ is just an
example. The question is also relevant for various other $X$-valued
function spaces on $(0,T)$ or on $\R_+$. The main motivation for the
study of maximal regularity for linear problems is to be able to do
fixed point arguments without loss of regularity for, e.g. quasilinear
parabolic equations. For maximal $L^p$-regularity for linear problems
of elliptic and parabolic type we refer to the seminal work
\cite{Denk-Hieber-Pruess}, and e.g. to
\cite{ClementLi,Denk:MR-parabolic} for applications of maximal
regularity to quasilinear problems.

\noindent Operator sum theorems of different types have been given in the
literature over the past 50 years and we mention as most influential for
several variants the results by Da~Prato-Grisvard (\cite{DaP-G},
1975), by Dore-Venni (\cite{Do-V}, 1987), and by Kalton-Weis
(\cite{Ka-W}, 2001), and each had immediate impact on the study of
parabolic evolution equations.  Here we present a simple common
approach to such operator sum results and prove a new one. Our
approach can, for the result by Da~Prato-Grisvard, be sketched as
follows.

\noindent For simplicity, we first assume that $-A$ generates a
bounded analytic semigroup $(T(t))_{t\ge0}$ and that $-B$ generates a
bounded semigroup $(S(t))_{t\ge0}$.  If these semigroups commute, then
$(T(t)S(t))_{t\ge0}$ is a strongly continuous semigroup whose
generator is the closure $\ov{A+B}$ of the operator $A+B$.  In order
to see this, observe that $A+B$ has domain $D(A+B)=D(A)\cap D(B)$
which is dense and invariant under the semigroup operators $T(t)S(t)$
and apply a standard result from semigroup theory (see, e.g., \cite[Prop.~II.1.7]{Engel-Nagel}).

\noindent For suitable $x\in X$ we then have
\begin{equation}\label{eq:StTt}
 Sx:=\big(\ov{A+B}\big)^{-1} x =\int_0^\infty S(t)T(t)x\,dt.
\end{equation}
We basically have to show that $AS$ (or $BS$) extends to a bounded
operator on $X$, see Section~\ref{sec:prelim}. In order to do so we
apply suitable functionals $x'\in X'$ and write
\[
  \langle ASx, x'\rangle=\int_0^\infty \langle A S(t)T(t)x, x'\rangle\,dt 
 = \int_0^\infty \langle S(t)(tA)^{\einhalb}T(\sfrac{t}{2})x,((tA)T(\sfrac{t}{2}))'x'\rangle\,\frac{dt}t,
\]
using that $S(t)$ and $T(t)$ commute.  The key assumption for the Da
Prato-Grisvard result is that $X$ is a real interpolation space or, in
other words, that the norm of $X$ is equivalent to a certain
Littlewood-Paley norm, namely
\begin{align}\label{eq:normXeqsim}
 \nn{x}{X}\simeq \nn{t\mapsto (tA)^{\einhalb}T(\sfrac{t}{2})x}{L^p(0,\infty;\frac{dt}t,X)}
\end{align}
for some $p\in[1,\infty]$. This in turn is equivalent to the two
estimates
\begin{align*}  
 \nn{t\mapsto (tA)^{\einhalb}T(\sfrac{t}{2})x}{L^p(0,\infty;\frac{dt}t,X)}
  \lesssim  &  \; \nn{x}{X}, \\
 \nn{t\mapsto ((tA)^{\einhalb}T(\sfrac{t}{2}))'x'}{L^{p'}(0,\infty;\frac{dt}t,X')}
  \lesssim  & \; \nn{x'}{X'}.
\end{align*}
Since
\begin{align}\label{eq:B-bdd}
  \nn{t\mapsto S(t)(tA)^{\einhalb}T(\sfrac{t}{2})x}{L^p(0,\infty;\frac{dt}t,X)}\le\left(\sup_t\n{S(t)}\right)
   \nn{t\mapsto (tA)^{\einhalb}T(\sfrac{t}{2})x}{L^p(0,\infty;\frac{dt}t,X)}
\end{align}
we can conclude via H\"older
\begin{align*}
  |\langle ASx, x'\rangle|
  \le &  \; \nn{t\mapsto S(t)(tA)^{\einhalb}T(\sfrac{t}{2})x}{L^p(0,\infty;\frac{dt}t,X)}
  \nn{t\mapsto ((tA)^{\einhalb}T(\sfrac{t}{2}))'x'}{L^{p'}(0,\infty;\frac{dt}t,X')} \\
  \lesssim & \; \left(\sup_t\n{S(t)}\right)\,\nn{x}{X}\nn{x'}{X'},
\end{align*}
and use that $(S(t))$ is bounded. Clearly, this approach should also
work for other Littlewood-Paley norms. In the general case, however,
we cannot use \eqref{eq:StTt} and we resort to a formula involving
resolvents, namely
\begin{align}\label{eq:def-Sintro}
   Sx=\frac1{2\pi i}\int_\Ga (\la+B)^{-1}R(\la,A)x\,d\la,
\end{align}
where $\Ga$ is the boundary of a sector $\sect{\rho}$ with
$\rho\in(\om_A,\pi-\om_B)$ oriented such that $\sect{\om_A}$ is to the
left of $\Ga$.

We show that this approach yields a unified perspective for the known
results on closedness of operator sums by Da~Prato-Grisvard
(\cite{DaP-G}), clearly tied to the real interpolation method, and by
Kalton-Weis (\cite{Ka-W}). The latter uses a bounded
$H^\infty$-calculus for one operator. Boundedness of the
$H^\infty$-calculus, however, can be characterized (under a mild
assumption on the Banach space) by estimates for square function norms
involving Littlewood-Paley expressions for this operator. Here, we
will work with these Littlewood-Paley norms directly.

The proofs we give for those results on the closedness of operator
sums are ``cheap'' in the sense that we just use norms on
Littlewood-Paley type expressions for one operator (and its dual) and
a corresponding sectoriality estimate for the other operator. Similar
to the above sketch, the proof itself is then done in a few lines. We
demonstrate this in Section~\ref{sec:DaP-G} for the Da~Prato-Grisvard
result in real interpolation spaces and in Section~\ref{sec:Ka-W} for
the Kalton-Weis result.

In Section~\ref{sec:Tr-L} we establish a new result on closedness of
operator sums in the generalized Triebel-Lizorkin spaces that were 
introduced in \cite{KUll}. This can be
viewed as a counterpart to the Da~Prato-Grisvard result where real
interpolation is replaced by the $l^q$-interpolation method that was 
introduced in \cite{Ku:lq}. Since we also need to know something 
about the dual situation, as can be seen from the sketch above, and 
the dual situation is not covered in \cite{KUll, Ku:lq} we provide the 
necessary material in Appendix~\ref{app:dual-lq}.

It came as a bit of a surprise to us that also the Dore-Venni result
fits into our setting. In order to see this we had to reconsider the
complex interpolation method from our perspective in the sense that we
need a ``real type'' formulation. We do this in
Appendix~\ref{app:complex}, the principal inspiration coming from
\cite{LL-interpol}. The approach itself can be seen as a variant of
certain arguments used for other but related purposes in
\cite{Pr-So}. We present our proof of the Dore-Venni result in
Section~\ref{sec:Do-V}. Maybe it is less convincing to term this a ``cheap
way'' but it makes very transparent that the technicalities we face
are due to intrinsic properties of the complex interpolation method.

\section{Preliminaries}\label{sec:prelim}

Let $A$ and $B$ be closed and injective linear operators in a Banach
space $X$ and assume $\rho(A)\cap\rho(B)\neq\MP$ and that resolvents
of $A$ and $B$ commute. Further assume that $D(A)\cap D(B)$ is dense
in $X$. We consider the operator $A+B$ with $D(A+B)=D(A)\cap D(B)$.

We present two lemmas that reduce the proof of closedness of the
operator $A+B$ to the verification of certain properties for an operator
$S$. These properties will then ensure that the closure $\ov{S}$ of $S$ equals $(A+B)^{-1}$.

\begin{lemma}\label{lem:1}
  Let $S$ be a densely defined linear operator in $X$ with range
  $R(S)\subseteq D(A)\cap D(B)$. Assume that $(A+B)Sx=x$ for all
  $x \in D(S)$ and that one of the operators $AS$, $BS$ has a bounded extension, i.e.
  $\ov{AS}\in\cL(X)$ or $\ov{BS}\in\cL(X)$.  Then $S$ is closable, the closure
  $\ov{S}$ is injective and maps $D(\ov{S})$ into $D(A)\cap D(B)$ with
  $(A+B)\ov{S}x=x$ for all $x\in D(\ov{S})$.
\end{lemma}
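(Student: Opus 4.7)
\medskip\noindent\textbf{Proof plan for Lemma~\ref{lem:1}.}
The first observation is that the assumption is symmetric in $A$ and $B$: from $(A+B)Sx=x$ on $D(S)$ we read off $BSx=x-ASx$, hence if $\ov{AS}\in\cL(X)$ then automatically $\ov{BS}\in\cL(X)$ with $\ov{BS}=I-\ov{AS}$, and vice versa. So in what follows I may assume that both operators $AS$ and $BS$ extend to bounded operators on $X$.

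Next I would prove closability. Suppose $(x_n)\subset D(S)$ with $x_n\to 0$ and $Sx_n\to y$ in $X$. By boundedness of $\ov{AS}$ we have $ASx_n=\ov{AS}x_n\to 0$. Since $A$ is closed and $Sx_n\to y$, $ASx_n\to 0$, closedness gives $y\in D(A)$ and $Ay=0$. Injectivity of $A$ forces $y=0$, so $S$ is closable. (If one preferred to use $B$, the same argument works verbatim with $BSx_n=x_n-ASx_n\to 0$.)

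Now let $x\in D(\ov{S})$ and pick $(x_n)\subset D(S)$ with $x_n\to x$ and $Sx_n\to \ov{S}x$. By boundedness, $ASx_n\to \ov{AS}x$ and $BSx_n\to \ov{BS}x$. Closedness of $A$ and $B$ then yields $\ov{S}x\in D(A)\cap D(B)$ with $A\ov{S}x=\ov{AS}x$ and $B\ov{S}x=\ov{BS}x$. Adding the two and using the relation $(A+B)Sx_n=x_n\to x$ gives
\[
 (A+B)\ov{S}x=\ov{AS}x+\ov{BS}x=\lim_n(ASx_n+BSx_n)=\lim_n x_n=x.
\]

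Finally, injectivity of $\ov{S}$ is immediate: if $\ov{S}x=0$ then $x=(A+B)\ov{S}x=0$. There is no real obstacle here; the only conceptual point is to notice that the single bounded-extension hypothesis together with $(A+B)Sx=x$ already forces \emph{both} $\ov{AS}$ and $\ov{BS}$ to be bounded, after which closedness of $A$, $B$ and the injectivity of $A$ (or $B$) do all the work.
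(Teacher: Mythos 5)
Your proof is correct and follows essentially the same route as the paper: first note that $(A+B)S\subseteq I_X$ and density of $D(S)$ force both $\ov{AS}$ and $\ov{BS}$ to be bounded, then use closedness of $A$ and $B$ together with injectivity to get closability, and run the same limit argument to obtain $\ov{S}x\in D(A)\cap D(B)$, $(A+B)\ov{S}x=x$, and injectivity of $\ov{S}$. No gaps to report.
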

\begin{proof}
  First note that $\ov{AS},\ov{BS}\in\cL(X)$ by $(A+B)S\subseteq I_X$ and denseness of $D(S)$.
  
  \smallskip\noindent
  Let $(x_n)$ be a sequence in $D(S)$ and $z\in X$ such that
  $x_n\to 0$ and $Sx_n\to z$. Then $ASx_n=\ov{AS}x_n\to 0$ and
  $BSx_n=\ov{BS}x_n\to0$. Closedness of and $A$ and $B$ implies
  $z\in D(A)\cap D(B)$ and $Az=0=Bz$. By injectivity of $A$ or $B$ we
  conclude that $z=0$. Hence $S$ is closable.
  
  \smallskip\noindent 
  Now let $(x_n)$ be a sequence in $D(S)$ and $x,z\in X$ with
  $x_n\to x$ and $Sx_n\to z$. Then $ASx_n=\ov{AS}x_n\to \ov{AS}x$ and
  $BSx_n=\ov{BS}x_n\to \ov{BS}x$. By closedness of $A$ and $B$ we
  conclude $z\in D(A)\cap D(B)$ and $Az=\ov{AS}x$,
  $Bz=\ov{BS}x$. Hence $(A+B)z=(\ov{AS}+\ov{BS})x=x$. In particular,
  if $z=0$ then $x=0$ so $\ov{S}$ is injective.  Moreover, the
  argument shows $\ov{S}:D(\ov{S})\to D(A)\cap D(B)$ and
  $(A+B)\ov{S}x=x$ for all $x\in D(\ov{S})$, i.e.
  $(A+B)\ov{S}\subseteq I_X$.
\end{proof}

\begin{lemma}\label{lem:2}
  In addition to the assumptions of Lemma~\ref{lem:1}, assume that the
  linear operator $S$ commutes with resolvents of $A$ and of $B$.  If
  $\ov{S}$ is bounded or if there exists $\la\in\rho(A)\cap\rho(-B)$
  such that $R(\la,A)(\la+B)^{-1}D(\ov{S})$ is a core for $A+B$, then
  $A+B$ is closed and injective and $\ov{S}=(A+B)^{-1}$.
\end{lemma}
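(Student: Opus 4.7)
The strategy is to show $A+B = \ov{S}^{-1}$. From Lemma~\ref{lem:1}, $\ov{S}$ is injective with $\ran\ov{S} \subseteq D(A+B)$ and $(A+B)\ov{S}x = x$ for all $x \in D(\ov{S})$; hence $\ov{S}^{-1}$ is a closed injective operator satisfying $\ov{S}^{-1}\subseteq A+B$. The task therefore reduces to establishing the reverse inclusion $A+B \subseteq \ov{S}^{-1}$, which automatically forces closedness and injectivity of $A+B$ together with $\ov{S}=(A+B)^{-1}$.

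First, a routine closure argument transfers the commutation of $S$ with $R(\la, A)$ and with $(\la+B)^{-1}$ to $\ov{S}$; in particular these resolvents leave $D(\ov{S})$ invariant. In the case where $\ov{S}$ is bounded (so $D(\ov{S})=X$), commutation with $R(\mu, A)$ on all of $X$ together with the decomposition $y = R(\mu, A)(\mu-A)y$ for $y\in D(A)$ gives $\ov{S}y\in D(A)$ and $A\ov{S}y = \ov{S}Ay$; similarly $\ov{S}$ leaves $D(B)$ invariant with $B\ov{S}y=\ov{S}By$. Consequently, for $y \in D(A+B)$ one has $\ov{S}y\in D(A+B)$ and
\[
  \ov{S}(A+B)y = (A+B)\ov{S}y = y,
\]
so $y \in \ran\ov{S}$ with $\ov{S}^{-1}y = (A+B)y$. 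This yields $A+B\subseteq \ov{S}^{-1}$.

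In the second case, the plan is to show $D_0 := R(\la, A)(\la+B)^{-1}D(\ov{S}) \subseteq \ran\ov{S}$. Setting $U := R(\la, A) - (\la+B)^{-1}$, the elementary identities $AR(\la,A)=\la R(\la,A) - I$ and $B(\la+B)^{-1}=I-\la(\la+B)^{-1}$, combined with commutation of resolvents of $A$ and $B$, yield
\[
  R(\la, A)(\la+B)^{-1}(A+B)u = U u, \qquad u\in D(A+B).
\]
Applying this to $u = \ov{S}z$ for $z\in D(\ov{S})$ and using $(A+B)\ov{S}z=z$, one obtains
\[
  R(\la, A)(\la+B)^{-1}z = U\ov{S}z = \ov{S}Uz,
\]
the last equality by commutation of $\ov{S}$ with $U$ from the first step. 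Hence $D_0\subseteq\ran\ov{S}$, and on $D_0$ the operators $\ov{S}^{-1}$ and $A+B$ coincide. Since $D_0$ is a core for $A+B$ and $\ov{S}^{-1}$ is closed, the graph of $A+B$ is contained in that of $\ov{S}^{-1}$, giving $A+B \subseteq \ov{S}^{-1}$.

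The main subtlety lies in this second case: one must exhibit each element of $D_0$ not merely as a point of $D(\ov{S})$, but actually as an image $\ov{S}(\cdot)$. The ``quasi-inverse'' identity for $R(\la,A)(\la+B)^{-1}$ relative to $A+B$ is the algebraic bridge from $D(\ov{S})$ to $\ran\ov{S}$; once it is in hand, the core hypothesis closes the argument immediately.
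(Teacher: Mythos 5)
Your proof is correct and takes essentially the same approach as the paper: in the core case both arguments hinge on the identity $(A+B)R(\la,A)(\la+B)^{-1}=R(\la,A)-(\la+B)^{-1}$ (your $U$), commutation of $\ov{S}$ with this operator and with $A+B$, and a closure argument based on the core hypothesis. Your framing via the two inclusions $\ov{S}^{-1}\subseteq A+B$ and $A+B\subseteq\ov{S}^{-1}$ is a slightly tidier organizational device -- the closedness of $A+B$ is then inherited from that of $\ov{S}^{-1}$ rather than verified directly by a sequence argument as in the paper's bounded case -- but the underlying computations coincide.
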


\begin{proof}
  From the assumptions we obtain that also the operator $\ov{S}$
  commutes with the resolvents of $A$ and $B$.
  
  \smallskip\noindent 
  If $\ov{S}\in\cL(X)$, we conclude that $\ov{S}$ commutes with $A$
  and $B$.  Let $(x_n)$ be a sequence in $D(A+B)$ and $x,z\in X$ with
  $x_n\to x$ and $(A+B)x_n\to z$.  We then have $\ov{S}x_n\to \ov{S}x$
  and
  \[
    (A+B)\ov{S}x_n=\ov{S}(A+B)x_n\to \ov{S}z.
  \]
  Furthermore, $A\ov{S}x_n=\ov{AS}x_n\to\ov{AS}x$ and
  $B\ov{S}x_n=\ov{BS}x_n\to\ov{BS}x$, hence
  $(A+B)\ov{S}x_n\to (\ov{AS}+\ov{BS})x=x$. Thus we have
  $x=\ov{S}z$. In particular, $x\in R(\ov{S})\subseteq D(A+B)$ and
  $(A+B)x=(A+B)\ov{S}z=z$. We have shown that $A+B$ is closed. We also
  see that $z=0$ implies $x=0$, and can conclude that $A+B$ is
  injective and $(A+B)^{-1}=\ov{S}$.
  
  \smallskip\noindent
  Now let $\la\in\rho(A)\cap\rho(-B)$ such that
  $D:=R(\la,A)(\la+B)^{-1}D(\ov{S})$ is a core for $A+B$. We calculate
  \begin{align*}
    (A+B)R(\la,A)(\la+B)^{-1}
    & = \; AR(\la,A)(\la+B)^{-1}+B(\la+B)^{-1}R(\la,A)   \\
    & = \; \big(\la R(\la,A)-I\big)(\la+B)^{-1}+\big(I-\la(\la+B)^{-1}\big)R(\la,A)    \\
    & = \; R(\la,A)-(\la+B)^{-1}.
\end{align*} 
Hence $(A+B)R(\la,A)(\la+B)^{-1}$ commutes with $\ov{S}$, i.e.
\begin{align*}
 (A+B)R(\la,A)(\la+B)^{-1}\ov{S}\subseteq \ov{S}(A+B)R(\la,A)(\la+B)^{-1},
\end{align*}
where the operator on the left has domain $D(\ov{S})$. On the other
hand, we have, by the commuting assumption,
$R(\ov{S})\subseteq D(A+B)$, and Lemma~\ref{lem:1},
\begin{align*}
 (A+B)R(\la,A)(\la+B)^{-1}\ov{S}\subseteq (A+B)\ov{S}R(\la,A)(\la+B)^{-1}\subseteq R(\la,A)(\la+B)^{-1}.
\end{align*}
We conclude
\begin{equation}\label{eq:D}
  \ov{S}(A+B)y=y \quad \mbox{ for all $y\in R(\la,A)(\la+B)^{-1}(D(\ov{S}))=D$}.
\end{equation}
Now let $x\in D(A+B)$. By assumption we find a sequence $(y_n)$ in
$D(\ov{S})$ such that, for $(x_n):=(R(\la,A)(\la+B)^{-1}y_n)$, we have
$x_n\to x$ and $(A+B)x_n\to (A+B)x$.  By $x_n\in D$ and \eqref{eq:D}
we have $\ov{S}(A+B)x_n=x_n\to x$. Since $\ov{S}$ is closed we obtain
$(A+B)x\in D(\ov{S})$ and $\ov{S}(A+B)x=x$.  The assertions follow.
\end{proof}

\begin{remark}\rm
  We draw attention to the fact that, in the proof of
  Lemma~\ref{lem:1}, it is sufficient to assume that $A$ \emph{or} $B$
  is injective. We also comment that boundedness of $\ov{S}$ in
  Lemma~\ref{lem:2} implies that $R(\la,A)(\la+B)^{-1}D(\ov{S})$ is a
  core for $A+B$ for any $\la\in\rho(A)\cap\rho(-B)$. However, if
  $\ov{S}$ is bounded we do not need the condition
  $\rho(A)\cap\rho(-B)\neq\MP$.
\end{remark}

The following is the main assumption on the operators $A$ and $B$ in this paper.

\begin{assumption}[$\Sigma$]\rm\label{ass:main}
  We assume that $A$ and $B$ are sectorial operators in $X$ with dense
  domain and range and with sectoriality angles $\om_A$ and $\om_B$,
  respectively. Moreover, we assume that resolvents of $A$ and $B$
  commute and that $\om_A+\om_B<\pi$ and let
  $\rho\in(\om_A,\pi-\om_B)$.
\end{assumption}

Observe that the rays $e^{\pm i\rho}(0,\infty)$ are contained in $\rho(A)\cap\rho(-B)$. Before we define the operator $S$ we recall the following.

\begin{lemma}\label{lem:Ainv}
Under Assumption~\assref{ass:main},
the operators $A$ and $B$ are injective and $A^{-1}$ and $B^{-1}$ are sectorial of angle $\om_A$ and $\om_B$, respectively, and with dense domain and range. 
\end{lemma}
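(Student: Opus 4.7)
The plan is to reduce everything to the resolvent identity
\[
  R(\la,A^{-1}) = \la^{-1}I - \la^{-2}R(\la^{-1},A),
\]
valid for any $\la\neq 0$ with $\la^{-1}\in\rho(A)$. I would split the verification into three short steps: injectivity of $A$ and $B$; denseness of the domain and range of the inverses; and the sectoriality estimate for $A^{-1}$ (and then $B^{-1}$).

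First, since $A$ and $B$ are sectorial with dense range, both are injective, as already recalled in Section~\ref{sec:intro}. Hence $A^{-1}$ and $B^{-1}$ are well-defined closed operators with $D(A^{-1})=R(A)$, $R(A^{-1})=D(A)$, and analogously for $B$. The density of the domains and ranges of $A$ and $B$ built into Assumption~\assref{ass:main} then immediately implies that $D(A^{-1})$, $R(A^{-1})$, $D(B^{-1})$ and $R(B^{-1})$ are all dense in $X$.

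For the sectoriality of $A^{-1}$, I would fix $\om'\in(\om_A,\pi)$ and pick any $\la\notin\ov{\Si_{\om'}}$ with $\la\neq 0$. Since the closed sector $\ov{\Si_{\om'}}$ is stable under the inversion $z\mapsto z^{-1}$ (a consequence of $\arg(z^{-1})=-\arg z$), we have $\la^{-1}\notin\ov{\Si_{\om'}}$, hence $\la^{-1}\in\rho(A)$ with $\n{R(\la^{-1},A)}\le M/|\la^{-1}|=M|\la|$. A short computation, starting from $(\la-A^{-1})Ax=\la Ax-x=\la(A-\la^{-1})x$ for $x\in D(A)$ and solving for $y=Ax$, confirms the displayed resolvent identity; it shows that $\la\in\rho(A^{-1})$ and that $\n{R(\la,A^{-1})}\le (1+M)/|\la|$. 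Since $\om'>\om_A$ was arbitrary, $A^{-1}$ is sectorial of angle $\om_A$. Replacing $A$ by $B$ throughout handles $B^{-1}$.

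The statement is essentially bookkeeping: the only substantive point is the derivation of the resolvent identity for $A^{-1}$, while everything else follows directly from the definitions and Assumption~\assref{ass:main}. I do not anticipate any serious obstacle here; the computation is purely algebraic and the sector inversion is immediate.
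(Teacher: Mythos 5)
Your proposal is correct and follows essentially the same route as the paper: your identity $R(\la,A^{-1})=\la^{-1}I-\la^{-2}R(\la^{-1},A)$ is just a rearrangement of the formula $\la(\la-A^{-1})^{-1}=A(A-\la^{-1})^{-1}$ used there, and injectivity and the density statements are handled identically (injectivity from dense range, $D(A^{-1})=R(A)$, $R(A^{-1})=D(A)$). The only cosmetic remark is that your argument directly gives angle $\le\om_A$ for $A^{-1}$; equality follows by applying the same reasoning to $(A^{-1})^{-1}=A$, which is implicit in both proofs.
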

\begin{proof}
Injectivity of a sectorial operator follows from denseness of its range. The formula
\begin{align}\label{eq:A-1sect}
 \la(\la-A^{-1})^{-1}=A(A-\la^{-1})^{-1},\qquad \la\in\rho(A)\sm\{0\}.
\end{align}
implies sectoriality of $A^{-1}$.
\end{proof}

The following lemma sheds some light on spaces we shall use.
 
\begin{lemma}\label{lem:prod-res}
Under Assumption~\assref{ass:main}, let $k,l,m,n\in\N_0$. Then the range of the operator
\begin{align}\label{eq:range-prod-res}
 \left(\prod_{j=1}^k R(\mu_j,A)\right) \left(\prod_{j=1}^l R(\nu_j,A^{-1})\right) \left(\prod_{j=1}^m (\al_j+B)^{-1}\right)
 \left(\prod_{j=1}^k (\beta_j+B^{-1})^{-1}\right) 
\end{align}
is invariant under resolvents of $A,B,A^{-1},B^{-1}$ and independent of $\mu_j,\nu_j\al_j,\beta_j\in e^{\pm i\rho}(0,\infty)$.
\end{lemma}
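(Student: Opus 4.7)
The plan is to derive both assertions from two observations: the entire family of resolvents appearing in \eqref{eq:range-prod-res} mutually commute, and the standard resolvent identity lets one absorb a change of a single parameter into a bounded operator that commutes with all remaining factors. Denote the operator in \eqref{eq:range-prod-res} by $T$.

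The first step is to verify that resolvents of $A$, $A^{-1}$, $B$, and $B^{-1}$ commute pairwise. Resolvents of $A$ and $B$ commute by Assumption~\assref{ass:main}, and rewriting \eqref{eq:A-1sect} from Lemma~\ref{lem:Ainv} yields
\begin{align*}
R(\nu, A^{-1}) = \nu^{-2} R(\nu^{-1}, A) - \nu^{-1} I, \qquad R(\beta, B^{-1}) = \beta^{-2} R(\beta^{-1}, B) - \beta^{-1} I,
\end{align*}
so that the resolvents of $A^{-1}$ and $B^{-1}$ are affine functions of resolvents of $A$ and $B$; hence the whole family mutually commutes. Invariance of $\ran T$ under any such resolvent $R$ is then immediate: for $y = T x \in \ran T$ one has $R y = R T x = T(R x) \in \ran T$.

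The second step is independence of the parameters. I fix one, say $\mu_1$, and pick another admissible $\mu_1'$. The resolvent identity gives
\begin{align*}
R(\mu_1', A) = \bigl[I + (\mu_1 - \mu_1') R(\mu_1', A)\bigr] R(\mu_1, A) = U R(\mu_1, A),
\end{align*}
with $U \in \cL(X)$. By the first step, $U$ commutes with every other factor in $T$, so replacing $\mu_1$ by $\mu_1'$ produces a new product $T' = U T$, whence $\ran T' \subseteq \ran T$; the symmetric argument yields equality. Iterating over each parameter in turn finishes the proof, the calculations for $\nu_j$, $\al_j$, $\beta_j$ being completely parallel.

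I do not foresee a genuine obstacle: the only delicate point is cross-commutativity with resolvents of $A^{-1}$ and $B^{-1}$, settled by the identity from Lemma~\ref{lem:Ainv}. Once that is in place, both claims reduce to the same well-worn manipulation that shows $\ran R(\lambda, A) = D(A)$ is independent of $\lambda \in \rho(A)$.
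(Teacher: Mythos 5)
Your proof is correct and takes essentially the same route as the paper's, which likewise notes that commutativity is clear from the assumptions and obtains independence of the parameters from the resolvent equation (phrased there as an induction on $k+l+m+n$, while you exchange one parameter at a time). The only blemish is a harmless sign slip: rewriting \eqref{eq:A-1sect} gives $R(\nu,A^{-1})=\nu^{-1}I-\nu^{-2}R(\nu^{-1},A)$ rather than its negative, but the structural fact you actually use -- that $R(\nu,A^{-1})$ is an affine combination of $I$ and a resolvent of $A$, and analogously for $B^{-1}$ -- is unaffected.
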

\begin{proof}
The stated invariance under resolvents is clear from the assumptions. Independence can be proved by induction on $k+l+m+n$ via the resolvent equation.
\end{proof}

\begin{definition}\rm\label{def:D_}
Under the Assumption~\assref{ass:main} and for $k,l,m,n\in\N_0$, we denote the range of the operator in \eqref{eq:range-prod-res} by $D_{k,l,m,n}$. 
\end{definition}

\begin{remark}\rm\label{rem:D_}
Then we have, e.g. $D_{k,l,0,0}=D(A^k)\cap D(A^{-l})=D(A^k)\cap R(A^l)$ and $D_{0,0,m,n}=D(B^m)\cap D(B^{-n})=D(B^m)\cap R(B^n)$. It can be shown that
\begin{align}\label{eq:D1010}
 D_{1,0,1,0}=D(BA)\cap D(B)=D(AB)\cap D(A). 
\end{align}
Indeed, for the first equality in \eqref{eq:D1010} the inclusion ``$\subseteq$'' is clear. For the reverse inclusion let $x\in D(BA)\cap D(B)$ and put $y:=(\mu+B)x$, $z:=(\mu+B)Ax$ where $|\arg\mu|=\rho$. Then $(\mu-A)x=(\mu+B)^{-1}(\mu y-z)$, hence $x=R(\mu,A)(\mu+B)^{-1}(\mu y-z)\in D_{1,0,1,0}$. The second equality in \eqref{eq:D1010} then holds by interchanging the roles of $A$ and $B$.
\end{remark}

\begin{lemma}\label{lem:D_}
The spaces $D_{k,l,m,n}$ are dense in $X$. Equipping $D_{k,l,m,n}$ with the norm that turns one of the operators in \eqref{eq:range-prod-res} into an isometry $X\to D_{k,l,m,n}$, we have that, for $\tilde{k}\ge k$, $\tilde{l}\ge l$, $\tilde{m}\ge m$, $\tilde{n}\ge n$, the space $D_{\tilde{k},\tilde{l},\tilde{m},\tilde{n}}$ is dense in $D_{k,l,m,n}$. 
\end{lemma}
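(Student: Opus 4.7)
The plan is to approximate elements of $X$ (resp.\ $D_{k,l,m,n}$) by scaled products of resolvents that converge strongly to the identity. The basic input is that for a sectorial operator $T$ with dense domain and range, one has $\mu R(\mu,T)x\to x$ as $|\mu|\to\infty$ along any ray outside the sectoriality sector, for every $x\in X$; this is classical and uses precisely the denseness of $D(T)$ (uniform boundedness coming from sectoriality, and convergence extending from $D(T)$ to its closure). By Lemma~\ref{lem:Ainv} the same holds for $A^{-1}$, $B$, and $B^{-1}$ along the rays $e^{\pm i\rho}(0,\infty)$. Using \eqref{eq:A-1sect}, the operators $R(\nu,A^{-1})$ and $(\beta+B^{-1})^{-1}$ are rational expressions in the resolvents of $A$ and $B$, respectively, so all four resolvent families pairwise commute.

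For the density of $D_{k,l,m,n}$ in $X$, given $x\in X$ I would replace each factor in \eqref{eq:range-prod-res} by its approximating version $\mu_j^{(n)}R(\mu_j^{(n)},A)$, $\nu_j^{(n)}R(\nu_j^{(n)},A^{-1})$, and so on, with all parameters tending to infinity on $e^{i\rho}(0,\infty)$. Call the resulting operator $T_n$. Each factor is uniformly bounded in $n$ and strongly convergent to $I$ on $X$, so the product $T_n$ also converges strongly to $I$. By Lemma~\ref{lem:prod-res}, $T_n x$ is a nonzero scalar multiple of an element in the range of the operator in \eqref{eq:range-prod-res}, hence $T_n x\in D_{k,l,m,n}$, and $T_n x\to x$ in $X$.

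For the second assertion, let $P$ denote the chosen operator from \eqref{eq:range-prod-res} realizing the isometry $X\to D_{k,l,m,n}$; a given $y\in D_{k,l,m,n}$ writes uniquely as $y=Px$. Take $Q_n$ to be an approximation operator built from $(\tilde{k}-k)+(\tilde{l}-l)+(\tilde{m}-m)+(\tilde{n}-n)$ additional scaled resolvent factors of the matching types, with fresh parameters on $e^{i\rho}(0,\infty)$ going to infinity. Commutativity yields $y_n:=Q_n y=PQ_n x$, and $y_n$ is a nonzero scalar multiple of the operator in \eqref{eq:range-prod-res} with tilde-indices applied to $x$, so $y_n\in D_{\tilde{k},\tilde{l},\tilde{m},\tilde{n}}$. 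Then
\[
 \|y_n-y\|_{D_{k,l,m,n}}=\|P(Q_n x-x)\|_{D_{k,l,m,n}}=\|Q_n x-x\|_X\to 0.
\]
The one nontrivial point throughout is that the strong convergence of each approximating factor must hold on all of $X$, which is guaranteed precisely by the denseness of domain \emph{and} range of $A$ and $B$ in Assumption~\assref{ass:main}, transferred to $A^{-1}$ and $B^{-1}$ via Lemma~\ref{lem:Ainv}.
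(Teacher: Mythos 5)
Your proof is correct, and it is the same basic strategy as the paper's -- an approximate identity built from resolvents whose range lands in $D_{k,l,m,n}$ -- but the realization differs enough to be worth noting. The paper takes $U_n=\vrh_n(e^{i(\pi-\rho)}A)\vrh_n(e^{-i\rho}B)$ with $\vrh_n$ from \eqref{eq:def-rho-n}, notes that $U_n^{k_0}x\in D_{k,l,m,n}$ for $k_0=\max\{k,l,m,n\}$, and gets $U_n^{k_0}x\to x$ by citing \cite[Prop.~9.4]{KuW:levico}; the single function $\vrh_n$ handles domain and range of $A$ (resp.\ $B$) simultaneously. You instead approximate factor by factor with scaled resolvents $\mu R(\mu,A)$, $\nu R(\nu,A^{-1})$, $\al(\al+B)^{-1}$, $\beta(\beta+B^{-1})^{-1}$ along the ray $e^{i\rho}(0,\infty)$, using only the elementary fact that $\mu R(\mu,T)x\to x$ for a sectorial $T$ with dense domain (applied to $A^{-1},B^{-1}$ via Lemma~\ref{lem:Ainv}, which is exactly where dense range enters), uniform boundedness from sectoriality, and parameter-independence from Lemma~\ref{lem:prod-res}; this makes the argument self-contained, at the cost of a slightly longer product. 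You also spell out the second assertion -- commuting the extra approximation $Q_n$ past the isometry $P$ and using $\|P(Q_nx-x)\|_{D_{k,l,m,n}}=\|Q_nx-x\|_X$ -- which the paper compresses into ``the second assertion follows from the first''; your explicit version implicitly uses the injectivity of $P$ (so that the norm is well defined and $y=Px$ is unique) and the commutation of all four resolvent families, both of which you correctly justify via \eqref{eq:A-1sect} and the standing assumptions. No gaps.
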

\begin{proof}
The second assertion follows from the first, so we prove the first assertion. For 
\begin{align}\label{eq:def-rho-n}
\vrh_n(\la)=\frac{n}{n+\la}-\frac1{1+n\la},\quad \la\in\C\sm(-\infty,0], n\in\N,
\end{align} 
we let 
\begin{align}\label{eq:Un}
 U_n:=\vrh_n(e^{i(\pi-\rho)}A)\vrh_n(e^{-i\rho}B). 
\end{align}
Observe that $e^{i(\pi-\rho)}A$ and $e^{-i\rho}B$ are sectorial by choice of $\rho$.
Let $k_0:=\max\{k,l,m,n\}$. Then, for any $x\in X$, we have $U_n^{k_0}x\in D_{k,l,m,n}$, and $U_n^{k_0}x\to x$ in $X$ holds by \cite[Prop.~9.4]{KuW:levico}.
\end{proof}

\section{The formula for $S$ and the key lemma}\label{sec:key}

\noindent We define the operator $S$ as in \eqref{eq:def-Sintro}.

\begin{definition}\rm\label{def:S-key} 
  Under Assumption~\assref{ass:main} we define
\begin{align}\label{eq:defS-key}
 Sx:=\frac1{2\pi i}\int_\Ga (\la+B)^{-1}R(\la,A)x\,d\la,\quad x\in D(S):=D_{1,1,1,1},
\end{align}
where $\Ga$ is the boundary of the sector $\sect{\rho}$ oriented such that $\sect{\om_A}$ is to the left of $\Ga$. 
\end{definition}

\noindent The integral in \eqref{eq:defS-key} is abolutely convergent for $x\in D(S)$ since
\begin{align}\label{eq:DS}
 D(S)=D_{1,1,1,1}\subseteq  R(AB)\cap R(BA)\subseteq R(A)\cap R(B).
\end{align} 
The next lemma gathers the properties of $S$ we need.

\begin{lemma}\label{lem:propS}
Under Assumption~\assref{ass:main} the operator $S$ defined in \eqref{eq:defS-key} satisfies the following:
\begin{enumerate}
\item \label{it:L3.6i}$S$ commutes with resolvents of $A$ and of $B$.
\item \label{it:L3.6ii} For $\la\in\Ga$, $(\la+B)^{-1}R(\la,A)(D(S))$ is a core for $A+B$.
\item \label{it:L3.6iii}
For $\si\in(\rho,\pi)$, $\ph\in H^\infty_0(\sect{\si})$ and $j=0,1$, we have
\[
 \ph(A)B^jSx=\frac1{2\pi i}\int_\Ga \ph(\la) B^j(\la+B)^{-1}R(\la,A)x\,d\la,\quad x\in D(S).
\]
\item \label{it:L3.6iv} For $x\in D(S)$, we have $Sx\in D(A)\cap D(B)$ and $(A+B)Sx=x$.
\end{enumerate}
\end{lemma}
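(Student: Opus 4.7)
Items (i) and (iii) are bookkeeping. By the commuting-resolvents hypothesis in Assumption~\assref{ass:main}, the integrand $(\la+B)^{-1}R(\la,A)$ in \eqref{eq:defS-key} commutes with every resolvent of $A$ and of $B$. For (i) I would pull $R(\mu,A)$ or $(\alpha+B)^{-1}$ inside the absolutely convergent integral. For (iii), $B(\la+B)^{-1}=I-\la(\la+B)^{-1}$ is uniformly bounded in $\la\in\Ga$ and may be moved inside; to handle $\ph\in H^\infty_0(\Si_\si)$, insert the Dunford--Riesz representation $\ph(A)=\frac{1}{2\pi i}\int_{\Ga'}\ph(\mu)R(\mu,A)\,d\mu$ on a contour $\Ga'$ slightly to the left of $\Ga$ and apply Fubini, justified by the decay of $\ph$ at $0$ and $\infty$.

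The crux is (iv). The identities $AR(\la,A)=\la R(\la,A)-I$ and $B(\la+B)^{-1}=I-\la(\la+B)^{-1}$ let me move $A$ resp.\ $B$ past the commuting bounded operator under the integral. Combined with closedness of $A$, $B$ and absolute convergence of the resulting integrals (which inherit decay from $x\in D(S)=D_{1,1,1,1}\subseteq D(A)\cap R(A)\cap D(B)\cap R(B)$), this shows $Sx\in D(A)\cap D(B)$ with
\[
  ASx=\frac{1}{2\pi i}\int_\Ga (\la+B)^{-1}AR(\la,A)x\,d\la,\qquad BSx=\frac{1}{2\pi i}\int_\Ga B(\la+B)^{-1}R(\la,A)x\,d\la.
\]
After expanding via the above identities and cancelling the $\pm\la(\la+B)^{-1}R(\la,A)x$ cross terms pointwise in the integrand, I obtain
\[
  (A+B)Sx=\frac{1}{2\pi i}\int_\Ga [R(\la,A)-(\la+B)^{-1}]x\,d\la.
\]
The two terms must be kept together because each only decays like $|\la|^{-1}$ at $\infty$, while the difference $R(\la,A)x-(\la+B)^{-1}x=\la^{-1}[R(\la,A)Ax+B(\la+B)^{-1}x]=O(|\la|^{-2})$ is integrable there; near $\la=0$ both are bounded by $x\in R(A)\cap R(B)$. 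To evaluate the combined integral I would regularise by $f_\epsilon\in H^\infty_0(\Si_\si)$ with $f_\epsilon\to 1$ boundedly as $\epsilon\to 0$. The regularised $R(\la,A)$ integral equals $f_\epsilon(A)x$ by Dunford--Riesz and converges to $x$ by McIntosh approximation (using $x\in D(A)\cap R(A)$). The regularised $(\la+B)^{-1}$ integral vanishes by Cauchy: since $\om_B<\pi-\rho$, $-\sigma(B)\subseteq-\overline{\Si_{\om_B}}$ is disjoint from $\overline{\Si_\rho}$, so $f_\epsilon(\la)(\la+B)^{-1}x$ is holomorphic in a neighborhood of $\overline{\Si_\rho}$, and closing $\Ga$ by arcs inside $\Si_\rho$ the arc contributions vanish thanks to the decay of $f_\epsilon$ at $0$ and $\infty$. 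Passing $\epsilon\to 0$ in the combined integral yields $(A+B)Sx=x$. These regularisation and arc-estimate arguments are the main technical obstacle.

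Finally for (ii), given $x\in D(A+B)$ I would set $y_n:=(\la-A)(\la+B)U_n^{k_0}x$, where $U_n$ are the regularisers from the proof of Lemma~\ref{lem:D_} and $k_0\ge 2$ is chosen so that $y_n\in D_{1,1,1,1}=D(S)$; this uses that $(\la-A)R(\mu,A)=(\la-\mu)R(\mu,A)+I$ and $(\la+B)(\alpha+B)^{-1}=(\la-\alpha)(\alpha+B)^{-1}+I$ shift the indices of $D_{k,l,m,n}$ by one, so $y_n\in D_{k_0-1,k_0,k_0-1,k_0}\subseteq D_{1,1,1,1}$. Commutation then gives $(\la+B)^{-1}R(\la,A)y_n=U_n^{k_0}x$; by Lemma~\ref{lem:D_}, $U_n^{k_0}x\to x$ in $X$; and since $U_n^{k_0}$ commutes with $A+B$, also $(A+B)U_n^{k_0}x=U_n^{k_0}(A+B)x\to(A+B)x$. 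Hence $(\la+B)^{-1}R(\la,A)(D(S))$ is a core for $A+B$.
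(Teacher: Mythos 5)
The proposal is essentially correct, and items (i)--(iii) follow the same route as the paper. For (ii), the paper simply identifies $(\la+B)^{-1}R(\la,A)(D(S))=D_{2,1,2,1}$ via Lemma~\ref{lem:prod-res} and shows this is a core using the $U_n$ from the proof of Lemma~\ref{lem:D_}; your construction of explicit preimages $y_n=(\la-A)(\la+B)U_n^{k_0}x$ is the same argument written inside out. For (iii), your plan (Dunford--Riesz contour for $\ph(A)$, Fubini, resolvent equation, Cauchy) matches the paper, but note one imprecision: the paper takes $\Ga'=\partial\sect{\rho'}$ with $\rho'\in(\rho,\si)\cap(\om_A,\pi-\om_B)$, i.e.\ the auxiliary contour lies \emph{outside} $\sect{\rho}$, so that each $\la\in\Ga$ is interior to $\sect{\rho'}$ and the Cauchy integral formula returns $\ph(\la)$ from the $\frac{1}{\mu-\la}R(\la,A)$ term while the $\frac{1}{\mu-\la}R(\mu,A)$ term integrates to zero over $\Ga$. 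Your ``slightly to the left of $\Ga$'' is at best ambiguous under the paper's orientation convention ($\sect{\om_A}$ to the left).

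The genuine divergence is in (iv). After establishing $Sx\in D(A)\cap D(B)$ (your argument is the same Hille-type argument as the paper's), you evaluate
\[
(A+B)Sx=\frac1{2\pi i}\int_\Ga\big[R(\la,A)-(\la+B)^{-1}\big]x\,d\la
\]
head-on: regularise by $f_\eps\in H^\infty_0$ with $f_\eps\to 1$ boundedly, split, recognise the $R(\la,A)$ piece as $f_\eps(A)x\to x$ by McIntosh approximation, and kill the $(\la+B)^{-1}$ piece by Cauchy with closing arcs inside $\sect{\rho}$ (legitimate since $-\sigma(B)\cap\sect{\rho}=\emptyset$, $f_\eps$ decays at $0$ and $\infty$, and $(\la+B)^{-1}x$ is bounded near $0$ for $x\in R(B)$). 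This is correct, but it carries precisely the regularisation/arc-estimate/limit machinery you yourself flag as the ``main technical obstacle''. The paper bypasses all of it by reusing (iii) with a single fixed regulariser $\ph(z)=z(1+z)^{-3}$ (so both $\ph$ and $z\mapsto z\ph(z)$ lie in $H^\infty_0$), reading off
\[
\ph(A)(A+B)Sx=\frac1{2\pi i}\int_\Ga \ph(\la)(\la+B)(\la+B)^{-1}R(\la,A)x\,d\la
=\frac1{2\pi i}\int_\Ga \ph(\la)R(\la,A)x\,d\la=\ph(A)x,
\]
and concluding by injectivity of $\ph(A)$. Injectivity of one fixed regulariser thus collapses your whole $\eps\to 0$ limit into a single identity --- a worthwhile shortcut to internalise.
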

\begin{proof}
\ref{it:L3.6i} is clear. 

\ref{it:L3.6ii} By Lemma~\ref{lem:prod-res} we have, for $\la\in\Ga$ and in the notation of Remark~\ref{rem:D_}, by \eqref{eq:A-1sect},
\[
 (\la+B)^{-1}R(\la,A)(D(S))=D_{2,1,2,1}.
\] 
This set is a core for $A+B$, since it is dense in $D(A+B)=D(A)\cap D(B)$ for the norm $\nn{x}{X}+\nn{Ax}{X}+\nn{Bx}{X}$. 
The proof is similar to the proof of Lemma~\ref{lem:D_}. For $U_n$ as in \eqref{eq:Un} and $x\in D(A+B)$ we have 
$U_n^2x\in D_{2,2,2,2}\subseteq D_{2,1,2,1}$ and $U_nx\to x$, $AU_nx=U_nAx\to Ax$, $BU_nx=U_nBx\to x$ in $X$.
In particular, $D_{2,1,2,1}$ is a core for $A+B$.

\ref{it:L3.6iii} Clearly, $\ph(A)$ commutes with $B^jS$. We choose $\rho'\in(\rho,\si)\cap(\om_A,\pi-\om_B)$, 
$\Ga':=\partial\sect{\rho'}$ and then have
\begin{align*}
B^jS\ph(A)x=&\frac1{2\pi i} \int_\Ga B^j(\la+B)^{-1}R(\la,A) \frac1{2\pi i}\int_{\Ga'} \ph(\mu) R(\mu,A)x\,d\mu\,d\la.
\end{align*}
As usual we use the resolvent equation $R(\la,A)R(\mu,A)=\frac1{\mu-\la}(R(\la,A)-R(\mu,A))$ and observe
\begin{align*}
 \frac1{2\pi i} \int_\Ga B^j(\la+B)^{-1} \frac1{2\pi i}\int_{\Ga'}\frac{\ph(\mu)}{\mu-\la}\,d\mu\, R(\la,A)x\,d\la
 =\frac1{2\pi i}\int_\Ga \ph(\la) B^j(\la+B)^{-1}R(\la,A)x\,d\la
\end{align*}
by Cauchy's integral theorem, since $\la$ is inside of $\Ga$. For the other term we observe that $\la\mapsto \frac1{\mu-\la} B^j(\la+B)^{-1}R(\mu,A)x$ is analytic in $\sect{\rho'}$. By $x\in D(S)$ the integral $\int_\Ga$ is absolutely convergent, and thus vanishes by Cauchy's theorem.

\ref{it:L3.6iv} We have $R(S)\subseteq D(A)\cap D(B)$: For $x\in D_{1,1,1,1}$ we have $Ax,Bx\in D_{0,1,0,1}$ and the integral in \eqref{eq:defS-key} with $Ax$ or $Bx$ in place of $x$ is still absolutely convergent. By the commuting assumption and Hille's theorem we conclude that $Sx\in D(A)\cap D(B)$.

We have to show $(A+B)Sx=x$. Let $\ph(z)=z(1+z)^{-3}$. Then $\ph$ and $z\mapsto z\ph(z)$ belong to $H^\infty_0(\sect{\si})$ for any $\si\in(\rho,\pi)$ and $\ph(A)=A^2(1+A)^{-3}$ is injective. 
Hence it suffices to show $\ph(A)(A+B)Sx=\ph(A)x$. By \ref{it:L3.6iii} we have
\begin{align*}
 \ph(A)(A+B)Sx=&\ \frac1{2\pi i}\int_\Ga \ph(\la)(\la+B)(\la+B)^{-1}R(\la,A)x\,d\la \\
        =&\ \frac1{2\pi i}\int_\Ga \ph(\la)R(\la,A)x\,d\la=\ph(A)x,
\end{align*}
as desired.
\end{proof}

By Lemma~\ref{lem:propS}, we can apply Lemmas~\ref{lem:1} and \ref{lem:2} to the operator $S$ and obtain closedness of $A+B$ by showing that $AS$ has a bounded extension. We need one more formula before we can start the preparation of the corresponding estimates.

\begin{lemma}\label{lem:likeKW}
Under Assumption~\assref{ass:main} we have, for the operator $S$ defined in \eqref{eq:defS-key},
\begin{align*}
 ASx=\frac1{2\pi i}\int_\Ga  \la^{\einhalb} (\la+B)^{-1} A^{\einhalb}R(\la,A)x\,d\la, \quad x\in D(S).
\end{align*}
\end{lemma}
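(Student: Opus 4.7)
The strategy is the algebraic factorization
\[
  \la - A \;=\; \bigl(\la^{1/2} - A^{1/2}\bigr)\bigl(\la^{1/2} + A^{1/2}\bigr)
\]
(principal branch), which yields $(\la^{1/2}-A^{1/2}) R(\la,A) = (\la^{1/2}+A^{1/2})^{-1}$ and hence the bounded-operator identity
\[
  A R(\la,A) \;=\; \la^{1/2}\, A^{1/2} R(\la,A) \;-\; A^{1/2}(\la^{1/2}+A^{1/2})^{-1}, \qquad \la \in \Ga.
\]
The hypothesis $\om_A < \rho < \pi - \om_B$ forces $\rho/2 + \om_A/2 < \pi$, so $(\la^{1/2}+A^{1/2})^{-1}$ is well defined with $\|(\la^{1/2}+A^{1/2})^{-1}\| \lesssim |\la|^{-1/2}$ on $\Ga$, and $A^{1/2}(\la^{1/2}+A^{1/2})^{-1}$ is uniformly bounded by sectoriality of $A^{1/2}$.

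I would then pull $A$ inside the integral defining $Sx$, i.e.\ establish
\[
  ASx \;=\; \frac{1}{2\pi i}\int_\Ga (\la+B)^{-1} A R(\la,A) x\, d\la, \qquad x \in D(S).
\]
Since $Sx \in D(A)$ by Lemma~\ref{lem:propS}(\ref{it:L3.6iv}) and $A$ is closed, this follows from Hille's theorem once the integrand is shown to be Bochner integrable. For $x = R(\mu,A) y \in D_{1,1,1,1}$, the resolvent equation rewrites $A R(\la,A) x$ as $(\mu-\la)^{-1}(\la R(\la,A)y - \mu R(\mu,A)y)$, producing $O(|\la|^{-1})$ decay at infinity; the extra $(\al+B)^{-1}(\beta+B^{-1})^{-1}$-factors inherent in the decomposition of $y$ via Definition~\ref{def:D_} absorb the $|\la|^{-1}$ blow-up of $(\la+B)^{-1}$ at the origin, exactly as for the absolute convergence noted after~\eqref{eq:defS-key}.

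Substituting the algebraic identity then splits the integral as
\[
  ASx \;=\; \frac{1}{2\pi i}\!\int_\Ga \la^{1/2}(\la+B)^{-1} A^{1/2} R(\la,A) x\, d\la \;-\; \frac{1}{2\pi i}\!\int_\Ga (\la+B)^{-1} A^{1/2}(\la^{1/2}+A^{1/2})^{-1} x\, d\la,
\]
both integrands now of size $O(|\la|^{-3/2})$ at infinity (using $x \in D(A) \subseteq D(A^{1/2})$ to commute $A^{1/2}$ past the resolvents). The main obstacle is the vanishing of the second integral, which I would handle by contour deformation. The integrand $(\la+B)^{-1} A^{1/2}(\la^{1/2}+A^{1/2})^{-1} x$ extends holomorphically to $\sect{\rho}\setminus\{0\}$, because $\om_A < \rho < \pi - \om_B$ places $-\la \in \rho(B)$ and $-\la^{1/2} \in \rho(A^{1/2})$ throughout that sector. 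Closing $\Ga$ with a large arc of radius $R$ and a small arc of radius $\eps$ inside $\sect{\rho}$ produces a closed contour bounding a region free of singularities; the $O(|\la|^{-3/2})$ decay at infinity and the boundedness of $(\la+B)^{-1} x$ near $0$ (from $x \in R(B) \supseteq D(S)$) annihilate the arc contributions as $R\to\infty$ and $\eps\to 0$. Cauchy's theorem then forces the second integral to vanish, and the desired formula follows.
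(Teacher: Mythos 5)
Your proof is correct, but it proceeds along a genuinely different route than the paper. The paper stays inside the functional-calculus framework it has already set up: it regularizes with $\vrh_n(A)$, writes $A=A^{1-s}A^{s}$, moves $A^{s}$ onto $x$ and uses Lemma~\ref{lem:propS}~\ref{it:L3.6iii} with the $H^\infty_0$-function $\la\mapsto\vrh_n(\la)\la^{1-s}$ to produce the factor $\la^{1-s}A^{s}R(\la,A)$ under the integral, and then removes the regularization by dominated convergence and sets $s=\einhalb$ (modifying \cite[Prop.~4.2]{Ka-W}). You instead pull $A$ into the integral (via Hille and closedness of $A$; note this step is in substance already contained in the paper's proof of Lemma~\ref{lem:propS}~\ref{it:L3.6iv}, where $ASx=S(Ax)$ is established), and then use the classical resolvent identity
\begin{equation*}
  AR(\la,A)\;=\;\la^{\einhalb}A^{\einhalb}R(\la,A)\;-\;A^{\einhalb}\bigl(\la^{\einhalb}+A^{\einhalb}\bigr)^{-1},
\end{equation*}
killing the correction term by Cauchy's theorem: since $\rho<\pi-\om_B$ and $A^{\einhalb}$ is sectorial of angle $\om_A/2$, the map $\la\mapsto(\la+B)^{-1}\bigl(\la^{\einhalb}+A^{\einhalb}\bigr)^{-1}A^{\einhalb}x$ is holomorphic on a sector containing $\ov{\sect{\rho}}\sm\{0\}$, is $O(|\la|^{-3/2})$ at infinity, and is bounded near $0$ because $x\in D(S)\subseteq R(B)$, so the arc contributions vanish and the $\Ga$-integral is zero. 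Your checks (invertibility and the bound $\lesssim|\la|^{-\einhalb}$ for $(\la^{\einhalb}+A^{\einhalb})^{-1}$ on $\Ga$, uniform boundedness of $A^{\einhalb}(\la^{\einhalb}+A^{\einhalb})^{-1}$, the $O(|\la|^{-1})$ decay of $AR(\la,A)x$ from the $A$-resolvent factor in $D_{1,1,1,1}$, and the absorption of the $(\la+B)^{-1}$ singularity at $0$ by the $B$-factors) are all the right ones and make the split and the deformation legitimate. What each approach buys: the paper's argument recycles Lemma~\ref{lem:propS}~\ref{it:L3.6iii} and so needs no separate convergence or contour estimates, at the price of an approximation and a limit $n\to\infty$; yours is more elementary and self-contained at the level of resolvent identities, and it makes transparent the structural reason the formula holds — the difference between the factors $A$ and $\la^{\einhalb}A^{\einhalb}$ inside the integrand is holomorphic across $\sect{\rho}$ and hence integrates to zero — at the price of the explicit decay and arc estimates you supplied.
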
 
\begin{proof}
We suitably modify the argument of \cite[Prop. 4.2]{Ka-W} and first take $x=\vrh_1(A)y$ with $\vrh_n$ from \eqref{eq:def-rho-n}. By Lemma~\ref{lem:propS}~\ref{it:L3.6iii} we then have, for $s\in(0,1)$ and $n\in\N$,
\begin{align*}
\vrh_n(A)ASx=&\vrh_n(A)A^{1-s}SA^s\vrh_1(A)y=\frac1{2\pi i}\int_\Ga \vrh_n(\la) \la^{1-s} (\la+B)^{-1} A^{s}R(\la,A)\vrh_1(A)y\,d\la \\
 =& \frac1{2\pi i}\int_\Ga \vrh_n(\la) \la^{1-s} (\la+B)^{-1} A^{s}R(\la,A)x\,d\la.
\end{align*}
Now we let $n\to\infty$ using dominated convergence for the integral. Then we take $s=\frac12$.
\end{proof}  

The key lemma of our approach to closedness of operator sums is the following.

\begin{lemma}\label{lem:key}
Let Assumption~\assref{ass:main} hold and let the operator $S$ be defined by \eqref{eq:defS-key}. If 
\begin{align}\label{eq:key-lem-est}
 \left|\int_0^\infty \langle \psi_\pm(tB) \ph_\pm(tA)x, \big( \ph_\pm(tA)\big)'x' \rangle\,\frac{dt}t \right|
 \lesssim\nn{x}{X}\nn{x'}{X'},
\end{align}
where 
\begin{align}\label{eq:def-psi-ph}
 \psi_\pm(z)=\frac1{e^{\pm i\rho}+z}\quad\mbox{and}\quad
 \ph_\pm(z)=\frac{z^{1/4}}{(-e^{\pm i\rho}+z)^{\einhalb}},
\end{align}
then $AS$ extends to a bounded operator on $X$ and $A+B$ with $D(A+B)=D(A)\cap D(B)$ is closed.
\end{lemma}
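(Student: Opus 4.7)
The plan is to postprocess the integral representation of $ASx$ supplied by Lemma~\ref{lem:likeKW},
\[
 ASx=\frac1{2\pi i}\int_\Ga \la^{1/2}(\la+B)^{-1}A^{1/2}R(\la,A)x\,d\la,\quad x\in D(S),
\]
so as to reduce the desired bound $\|ASx\|\lesssim\|x\|$ to the assumed estimate~\eqref{eq:key-lem-est}. I parametrize $\Ga$ by the two rays $\la=te^{\pm i\rho}$, with the orientation dictated by the requirement that $\sect{\om_A}$ lie to the left, and then perform the change of variables $s=1/t$. On the upper ray one finds $(te^{i\rho}+B)^{-1}=s\,\psi_+(sB)$ and $A^{1/2}(te^{i\rho}-A)^{-1}=-s^{1/2}\ph_+(sA)^2$; combining this with the factor $\la^{1/2}=t^{1/2}e^{i\rho/2}$, the orientation $d\la=e^{i\rho}dt$, and the Jacobian of $t=1/s$, and doing the analogous computation on the lower ray, produces nonzero constants $c_\pm$ depending only on $\rho$ such that
\[
 ASx=c_+\int_0^\infty \psi_+(sB)\,\ph_+(sA)^2\,x\,\frac{ds}{s}+c_-\int_0^\infty \psi_-(sB)\,\ph_-(sA)^2\,x\,\frac{ds}{s}.
\]

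Next I pair against $x'\in X'$. Since the resolvents of $A$ and $B$ commute, so do $\ph_\pm(sA)$ and $\psi_\pm(sB)$, and I may split $\ph_\pm(sA)^2=\ph_\pm(sA)\circ\ph_\pm(sA)$ and shift one factor to the dual side:
\[
 \langle\psi_\pm(sB)\ph_\pm(sA)^2x,x'\rangle=\langle\psi_\pm(sB)\ph_\pm(sA)x,\bigl(\ph_\pm(sA)\bigr)'x'\rangle.
\]
Applying hypothesis~\eqref{eq:key-lem-est} to each of the two resulting integrals then yields $|\langle ASx,x'\rangle|\lesssim\|x\|_X\|x'\|_{X'}$ for every $x\in D(S)$ and every $x'\in X'$. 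Because $D(S)=D_{1,1,1,1}$ is dense in $X$ by Lemma~\ref{lem:D_}, this shows that $AS$ extends to a bounded operator on $X$.

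For the remaining closedness assertion, Lemma~\ref{lem:propS} supplies everything needed to feed into Lemmas~\ref{lem:1} and~\ref{lem:2}: part~\ref{it:L3.6iv} gives $(A+B)Sx=x$ for $x\in D(S)$ and $R(S)\subseteq D(A)\cap D(B)$, part~\ref{it:L3.6i} gives that $S$ commutes with resolvents of $A$ and of $B$, and part~\ref{it:L3.6ii} gives that $(\la+B)^{-1}R(\la,A)D(S)$ is a core for $A+B$ for $\la\in\Ga$. Combining these with the boundedness of $\ov{AS}$ just established, Lemma~\ref{lem:1} makes $S$ closable and $\ov{S}$ injective with $(A+B)\ov{S}\subseteq I_X$, and Lemma~\ref{lem:2} then yields that $A+B$ on $D(A)\cap D(B)$ is closed (and injective) with $\ov{S}=(A+B)^{-1}$.

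The main obstacle is the bookkeeping in the change-of-variables step: the constants $c_\pm$, the consistent choice of branches for $\la^{1/2}$ and for the fractional powers hidden inside $\ph_\pm$, and the orientations on the two rays all have to conspire to produce exactly $\psi_\pm(sB)\ph_\pm(sA)^2$. A subsidiary point is what $\ph_\pm(sA)$ means at all, since $\ph_\pm$ has a branch singularity at the interior point $e^{\pm i\rho}$ of $\sect{\si}$; this is handled by the extended Dunford calculus along a contour that separates $\sigma(A)\subseteq\ov{\sect{\om_A}}$ from that singular point, exploiting the decay $\ph_\pm(z)=O(|z|^{-1/4})$ at infinity and $\ph_\pm(z)=O(|z|^{1/4})$ at zero.
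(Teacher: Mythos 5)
Your proposal is correct and follows essentially the same route as the paper: starting from Lemma~\ref{lem:likeKW}, parametrizing $\Ga$ by the two rays, rescaling so that the integrand becomes $\psi_\pm(sB)\ph_\pm(sA)^2$, splitting one factor $\ph_\pm(sA)$ onto the dual side, applying \eqref{eq:key-lem-est} together with density of $D(S)$, and then concluding closedness via Lemma~\ref{lem:propS} and Lemmas~\ref{lem:1}, \ref{lem:2}. The only difference is cosmetic (you substitute $s=1/t$ at the operator level before pairing, whereas the paper splits inside the pairing and rescales afterwards), and your handling of the branch point of $\ph_\pm$ by choosing an auxiliary angle separating $\ov{\sect{\om_A}}$ from $e^{\pm i\rho}$ is exactly what is implicitly used there.
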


\begin{proof}
We take suitable $x'\in X'$ and have, by Lemma~\ref{lem:likeKW},
\begin{align*}
 \langle ASx,x'\rangle &= \frac1{2\pi i} \int_\Ga \langle \la(\la+B)^{-1} \la^{\einhalb}A^{\einhalb}R(\la,A)x,x' \rangle\,\frac{d\la}\la \\
 &= - \frac1{2\pi i} \int_\Ga \langle \la(\la+B)^{-1} \la^{1/4}A^{1/4}(-\la+A)^{-\einhalb}x, 
                                                \la^{1/4}\big(A^{1/4}(-\la+A)^{-\einhalb}\big)'x' \rangle\,\frac{d\la}\la.
\end{align*}
We take the modulus and split $\int_\Ga$ into the two parts
$\int_{\Ga^\pm}$ in directions of $e^{\pm i\rho}$. 

We parametrize $\Ga^\pm$ by $\la=te^{\pm i\rho}$ where $t$ runs from
$\infty$ to $0$ or from $0$ to $\infty$. We thus obtain, up to a constant of modulus $1$,
\begin{align*}
 & \frac1{2\pi} \int_0^\infty \langle t(e^{\pm i\rho}t+B)^{-1} (tA)^{1/4}(-e^{\pm i\rho}t+A)^{-\einhalb}x, 
                                                \big( (tA)^{1/4}(-e^{\pm i\rho}t+A)^{-\einhalb}\big)'x' \rangle\,\frac{dt}t \\
 = \,\,& \frac1{2\pi} \int_0^\infty \langle (e^{\pm i\rho}+tB)^{-1} (tA)^{1/4}(-e^{\pm i\rho}+tA)^{-\einhalb}x, 
                                                \big( (tA)^{1/4}(-e^{\pm i\rho}+tA)^{-\einhalb}\big)'x' \rangle\,\frac{dt}t  \\
 =\,\, & \frac1{2\pi} \int_0^\infty \langle \psi_\pm(tB) \ph_\pm(tA)x, 
                                                \big( \ph_\pm(tA)\big)'x' \rangle\,\frac{dt}t.                                               
\end{align*}
Hence, if \eqref{eq:key-lem-est} holds then $AS$ extends to a bounded operator on $X$. 
As said before, Lemma~\ref{lem:propS}, allows to apply Lemmas~\ref{lem:1} and \ref{lem:2} to the operator $S$, 
and we thus obtain the claim.
\end{proof}  
  
\section{The Da~Prato-Grisvard result}\label{sec:DaP-G}

\begin{theorem}[Da~Prato-Grisvard]
Let Assumption~\assref{ass:main} hold, let $p\in[1,\infty]$, and assume 
\begin{align}\label{eq:DP-sim}
  \nn{t\mapsto\phi(tA)x}{L^p(0,\infty;\frac{dt}t,X)} \simeq \nn{x}{X}
\end{align} 
for some $\phi\in H^\infty_0(\sect{\rho})\sm\{0\}$. 

Then $A+B$ with $D(A+B)=D(A)\cap D(B)$ is closed, and $A(A+B)^{-1}$ extends to a bounded operator on $X$. 
\end{theorem}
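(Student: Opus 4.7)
My plan is to invoke the key lemma, Lemma~\ref{lem:key}, and verify the estimate \eqref{eq:key-lem-est} using the Littlewood--Paley equivalence \eqref{eq:DP-sim}. Applying H\"older's inequality with conjugate exponents $p,p'$ in the multiplicative measure $\tfrac{dt}{t}$ yields
\[
\Bigl|\int_0^\infty \langle \psi_\pm(tB)\ph_\pm(tA)x,\, (\ph_\pm(tA))'x'\rangle\,\tfrac{dt}{t}\Bigr|
\leq \nn{t\mapsto \psi_\pm(tB)\ph_\pm(tA)x}{L^p(0,\infty;\frac{dt}t,X)}\;\nn{t\mapsto (\ph_\pm(tA))'x'}{L^{p'}(0,\infty;\frac{dt}t,X')}.
\]
Since $\rho\in(\om_A,\pi-\om_B)$, the rays $e^{\pm i\rho}(0,\infty)$ lie in $\rho(-B)$ and sectoriality of $B$ gives $\n{\psi_\pm(tB)}_{\cL(X)}=\n{(e^{\pm i\rho}+tB)^{-1}}_{\cL(X)}\le M$ uniformly in $t>0$. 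Thus it suffices to establish the two Littlewood--Paley bounds
\[
\nn{t\mapsto \ph_\pm(tA)x}{L^p(0,\infty;\frac{dt}t,X)}\lesssim \n{x}_X
\quad\text{and}\quad
\nn{t\mapsto (\ph_\pm(tA))'x'}{L^{p'}(0,\infty;\frac{dt}t,X')}\lesssim \n{x'}_{X'}.
\]

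Next I would check that $\ph_\pm(z)=z^{1/4}(-e^{\pm i\rho}+z)^{-1/2}$ lies in $H^\infty_0(\sect{\sigma})$ for every $\sigma\in(\om_A,\rho)$: the branch point $e^{\pm i\rho}$ sits on $\partial\sect{\rho}$ but strictly outside $\overline{\sect{\sigma}}$, and $|\ph_\pm(z)|\lesssim \min(|z|^{1/4},|z|^{-1/4})$ on $\sect{\sigma}$. The primal bound then follows from \eqref{eq:DP-sim} by the classical McIntosh--Calder\'on subordination. Picking $\tilde\phi\in H^\infty_0(\sect{\rho})$ with $\int_0^\infty \tilde\phi(s)\phi(s)\,\tfrac{ds}{s}=1$, one has the reproducing formula $x=\int_0^\infty \tilde\phi(sA)\phi(sA)x\,\tfrac{ds}{s}$ on a dense subspace (e.g.\ $D_{1,1,0,0}$ in the notation of Definition~\ref{def:D_}), whence
\[
\ph_\pm(tA)x=\int_0^\infty \ph_\pm(tA)\tilde\phi(sA)\,\phi(sA)x\,\tfrac{ds}{s}.
\]
Estimating $\ph_\pm(tA)\tilde\phi(sA)$ in $\cL(X)$ via the $H^\infty_0$ Dunford integral over $\partial\sect{\sigma'}$ for some $\sigma'\in(\om_A,\sigma)$ gives $\n{\ph_\pm(tA)\tilde\phi(sA)}_{\cL(X)}\le h(s/t)$ with a scalar kernel $h\in L^1(d\tau/\tau)$; Young's convolution inequality on the multiplicative group $(0,\infty)$ combined with \eqref{eq:DP-sim} then delivers the claim.

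For the dual bound I would appeal to duality,
\[
\nn{t\mapsto (\ph_\pm(tA))'x'}{L^{p'}(0,\infty;\frac{dt}t,X')}
= \sup_{\nn{g}{L^p(\frac{dt}t,X)}\le 1}\Bigl|\Bigl\langle \int_0^\infty \ph_\pm(tA)g(t)\,\tfrac{dt}{t},\,x'\Bigr\rangle\Bigr|,
\]
so that it remains to prove $\n{y}_X\lesssim \nn{g}{L^p(\frac{dt}t,X)}$ for $y:=\int_0^\infty \ph_\pm(tA)g(t)\,\tfrac{dt}{t}$. Using the lower bound in \eqref{eq:DP-sim} applied to $y$, commuting $\phi(sA)$ inside the $t$-integral, and invoking the same scalar-kernel estimate with Young's inequality yields this bound. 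Together with the first paragraph we obtain \eqref{eq:key-lem-est}, and Lemma~\ref{lem:key} concludes the proof. The principal technical obstacle will be the careful handling of the subordination and reproducing formulas on a dense subspace where the required Fubini interchanges are legitimate; once reduced to the scalar kernel $h$, all estimates depend only on elementary bounds of $\ph_\pm$ and $\tilde\phi$ at $0$ and $\infty$.
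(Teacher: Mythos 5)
Your proposal is correct, and its skeleton coincides with the paper's: define $S$ by \eqref{eq:defS-key}, invoke Lemma~\ref{lem:key}, apply H\"older with exponents $p,p'$ in $L^p(0,\infty;\frac{dt}t)$, and absorb $\psi_\pm(tB)$ by plain sectoriality of $B$ as in \eqref{eq:B-sect}. Where you genuinely diverge is in how the two Littlewood--Paley bounds for $\ph_\pm$ and its dual are verified. The paper handles the primal bound by citing the change-of-auxiliary-function result \cite[Section~6.2]{fc-book}, and gets the dual bound structurally: \eqref{eq:DP-sim} is read as $X=(\dot{X}_{-1}(A),\dot{X}_1(A))_{1/2,p}$ (with the closure space $(\cdot)^0_{1/2,\infty}$ when $p=\infty$), and then the duality theorem \cite[1.11.2~Theorem]{Triebel} together with $\big(\dot{X}_\al(A)\big)'=(X')^\cdot_{-\al}(A')$ yields the $L^{p'}$-bound for $(\ph_\pm(tA))'x'$. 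You instead prove both bounds by hand: Calder\'on reproducing formula, the kernel estimate $\n{\ph_\pm(tA)\tilde\phi(sA)}\le h(s/t)$ with $h\in L^1(\frac{d\tau}\tau)$, and Young's inequality on the multiplicative group for the primal bound (which uses only the upper half of \eqref{eq:DP-sim}), and, for the dual bound, Bochner-space duality plus the lower half of \eqref{eq:DP-sim} applied to $y=\int_0^\infty\ph_\pm(tA)g(t)\,\frac{dt}t$. This is a legitimate alternative: it is self-contained, uses each half of the hypothesis exactly where one expects (upper bound for the square function, lower bound for the dual square function), and sidesteps both the interpolation-space identification and the $p=\infty$ closure subtlety; the price is redoing standard subordination lemmas that the paper simply quotes, and it obscures the conceptual point (see Remark~\ref{rem:DP}) that \eqref{eq:DP-sim} is precisely the real-interpolation hypothesis of Da~Prato--Grisvard. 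Two details to make explicit when writing it up: the duality identity for $\nn{t\mapsto(\ph_\pm(tA))'x'}{L^{p'}(0,\infty;\frac{dt}t,X')}$ only needs the elementary norming direction $L^{p'}(X')\hookrightarrow (L^p(X))'$, which is available here because $t\mapsto(\ph_\pm(tA))'x'$ is norm continuous, hence strongly measurable (no Radon--Nikod\'ym property is required); and the test functions $g$ should be simple and compactly supported in $(0,\infty)$ so that $y$ is an absolutely convergent Bochner integral and the interchange of integral and pairing is immediate, after which density suffices since Lemma~\ref{lem:key} only needs \eqref{eq:key-lem-est} for $x\in D(S)$ and a norming set of $x'$.
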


\begin{proof}
  We define the operator $S$ by \eqref{eq:defS-key} and apply
  Lemma~\ref{lem:key}. We may replace $\phi$ in \eqref{eq:DP-sim} by
  $\ph_\pm$, see \cite[Section~6.2]{fc-book}. Then we estimate
\begin{align*}
 &\ \left|\int_0^\infty \langle \psi_\pm(tB) \ph_\pm(tA)x, \big( \ph_\pm(tA)\big)'x' \rangle\,\frac{dt}t \right| \\
 \le &\ \nn{t\mapsto \psi_\pm(tB) \ph_\pm(tA)x}{L^p(0,\infty;\frac{dt}t,X)}
         \,\,\nn{t\mapsto \ph_\pm(tA)\big)'x'}{L^{p'}(0,\infty,\frac{dt}t,X')} \\
 \le &\ \left(\sup_t\n{\psi_\pm(tB)}\right) \nn{t\mapsto\ph_\pm(tA)x}{L^p(0,\infty;\frac{dt}t,X)}
         \,\,\nn{t\mapsto \ph_\pm(tA)\big)'x'}{L^{p'}(0,\infty,\frac{dt}t,X')}.
\end{align*}
By sectoriality of $B$ we have 
\begin{align}\label{eq:B-sect}
 \sup_t\n{\psi_\pm(tB)}<\infty.
\end{align}
Denoting by $\dot{X}_\al(A)$, $\al\in\R$, the completion of $(D(A^\al),\nn{A^\al\cdot}{X})$, condition \eqref{eq:DP-sim} is, for $1\le p<\infty$, equivalent to 
\begin{align}\label{eq:Xreal-ip}
 X=(\dot{X}_{-1}(A),\dot{X}_{1}(A))_{\einhalb,p},
\end{align}
where we recall that $D(A)\cap R(A)=\dot{X}_{-1}(A)\cap \dot{X}_1(A)$ is dense in $X$. For $p=\infty$ this and \eqref{eq:DP-sim} together imply that $X$ equals the closure of $D(A)\cap R(A)$ in $(\dot{X}_{-1}(A),\dot{X}_1(A))_{\einhalb,\infty}$, i.e., for $p=\infty$ we have
\[
 X=(\dot{X}_{-1}(A),\dot{X}_1(A))_{\einhalb,\infty}^0
\]
in the notation of \cite[1.11.2]{Triebel}.
Hence \cite[1.11.2~Theorem]{Triebel} yields
\[
 X'=(\dot{X}_{-1}(A)',\dot{X}_1(A)')_{\einhalb,p'},\quad 1\le p\le\infty.
\]
Since $\big(\dot{X}_\al(A)\big)'=(X')^\cdot_{-\al}(A')$ in a canonical way, the norm in $X'$ is equivalent to expressions
$\nn{t\mapsto (\phi(tA))'x'}{L^{p'}(0,\infty,\frac{dt}t;X')}$ where
$\phi\in H^\infty_0(\sect{\rho})\sm\{0\}$. 
Hence we have
\begin{align*}
 \nn{t\mapsto \ph_\pm(tA)\big)x}{L^{p}(0,\infty,\frac{dt}t,X)} \lesssim & \ \nn{x}{X}, \\
 \nn{t\mapsto (\ph_\pm(tA)\big)'x'}{L^{p'}(0,\infty,\frac{dt}t,X')} \lesssim & \ \nn{x'}{X'},
\end{align*}
and this allows to conclude.
\end{proof}

\begin{remark}\rm\label{rem:DP}
The original result in \cite{DaP-G} is formulated in real interpolation spaces 
$X=(\mathcal{X},\dot{\mathcal{X}}_1(\mathcal{A}))_{\theta,p}$, $\theta\in(0,1)$, where $\mathcal{A}$ is sectorial in $\mathcal{X}$, and $A$ is the ``part'' of $\mathcal{A}$ in $X$. This condition is equivalent to the relation \eqref{eq:Xreal-ip} in the proof as can be seen, e.g., by reiteration.
\end{remark}

\section{The Kalton-Weis result}\label{sec:Ka-W}

To formulate the Kalton-Weis approach we recall some definitions and
concepts.  Let $X$ be a Banach space and $\cT \subseteq \cL(X)$. Then
$\cT$ is called $R$-bounded (resp. $\gamma$-bounded) with constant $C$
if for each $N \ge 1$ and each choice $T_1, \ldots, T_N \in \cT$ and
and each choice $x_1, \ldots, x_N \in X$
\[
  \EE \Bignorm{ \sum_{n=1}^N r_n  T_n x_n }^2
  \le C^2  \; \EE \Bignorm{ \sum_{n=1}^N r_n  x_n }^2
\]
where $(r_n)$ is an i.i.d. sequence of Rademachers or replaced by an 
i.i.d sequence of Gaussians $(\gamma_n)$, respectively.
The case $N=1$ implies
that $R$-bounded or $\gamma$-bounded collections are uniformly
bounded. It is moreover well known that $R$-boundedness implies
$\gamma$-boundedness, see for example \cite[Theorem
8.1.8]{fabulous4:band2}.  We call an operator $A$ $R$-sectorial, respectively 
$\gamma$-sectorial of angle $\rho$, if for each $\eps>0$ the
collection
\[
  \cT = \{ \lambda R(\lambda, A): \lambda \not\in \sect{\rho+\eps} \}
\]
is  $R$-bounded, respectively $\gamma$-bounded.

\medskip

An operator $ T $ acting from a separable\footnote{we only need
  separable Hilbert spaces in our context, see
  \cite[Section~9.1]{fabulous4:band2} for the general case.} Hilbert
space $ H $ into a Banach space $ X $ is called $\gamma$-radonifying
if, for some (and hence all) orthonormal basis $(h_n)$ and an
i.i.d. sequence of Gaussian random variables $(\gamma_n)$ on a
probability space $\Omega$, the series
\[
  \sum_n \gamma_n T h_n
\]
converges in $ L^2(\Omega; X) $. The space of $\gamma$-radonifying operators, equipped with the norm
\[
  \|T\|_\gamma = \left( \mathbb{E} \left\| \sum_n \gamma_n T h_n \right\|^2 \right)^{\einhalb}
\]
forms a Banach space, denoted by $\gamma(H; X)$. If $X$ is Hilbert,
$\gamma$-radonifying operators coincide with Hilbert-Schmidt
operators. Denote by $\gamma'(H; X')$ its dual space, defined by trace
duality \cite[section 9.1.f]{fabulous4:band2}. We are particularly
interested in determining when certain ``square'' functions define
operators in $\gamma(H; X)$ or $\gamma'(H; X')$ in the following way.

\medskip\noindent Let us assume that $A$ is an injective sectorial
operator of angle $\omega$ having dense range. For an interval $I$
equipped with a measure $\mu$ and a nontrivial domain $\mathcal{O}$ in
the complex plane, we can regard a bounded function
$f: I \times \mathcal{O} \to \mathbb{C}$ that satisfy
\begin{enumerate}
\item For all $ t \in I $, the function $ z \mapsto f(t, z) $ is holomorphic on $ \mathcal{O} $.
\item For all $ z \in \mathcal{O} $, the function $ t \mapsto f(t, z) $ is square integrable with respect to $\mu$.
\end{enumerate}
as a vector-valued bounded holomorphic function\footnote{such
  functions a automatically measurable}
$ F: \mathcal{O} \to L^2(I, d\mu)$.  This is in particular the case
for $\phi \in H_0^\infty(\Sigma_\theta)$, $I=(0, \infty)$, $\mu = dt/t$ and
$f(t, z) := \phi(tz)$.  For each $x\in X, x' \in X'$ we may then
define the operators
\[
  T_x: 
  \begin{cases}
    L^2(I) &\to X \\
    g &\mapsto \langle g, F(z) \rangle_{L^2}  (A)x
  \end{cases}
\]
as well as a ``dual'' counterpart, the operator
\[
  T_{x'}^d: 
  \begin{cases}
    L^2(I) &\to X' \\
    g &\mapsto \big(\langle g, F(z) \rangle (A)\big)' x'
  \end{cases}
\]
By a theorem of Kalton and Weis,  these operators are tightly linked to
the boundedness of the $H^\infty$-calculus: if both operators, 
\[
  S: \quad x \mapsto T_x \in \cL(X;  \gamma(H; X))
  \quad\text{and}\quad
  S^d: \quad x' \mapsto T_{x'}^d \in \cL(X';  \gamma'(H; X'))
\]
are bounded, then $A$ admits a bounded functional calculus on
$\Sigma_\theta$, and conversely, if $X$ is a Banach space of finite
cotype, then boundedness of the $H^\infty$-functional calculus on
$\Sigma_\nu$ for some $\nu < \theta$ implies boundedness of the
operators $S, S^d$.  We refer e.g. to \cite[Theorems 7.2 and
7.7]{KalWei2004}, or to \cite[Theorems 10.4.16 and
10.4.19]{fabulous4:band2} for details and references.  If $S$
(respectively, $S^d$) is bounded, we say that $A$ admits square
function estimates (respectively, dual square function
estimates). Upon proper use of vector-valued (Pettis)-integrals, a
Fubini argument allows to write the operators $T_x$ and $T^d_{x'}$ as
\[
   x \mapsto \phi(tA) x
   \quad\text{and}\quad
   x' \mapsto  \phi(tA)'x'
 \]
 respectively, see \cite[Chap. 9]{fabulous4:band2}. We do not go
 into details of these vector-valued integrals, we consider them here
 as a handy symbolic notation for the previously constructed operators
 $T_x$ and $T^d_{x'}$.

\begin{theorem}[Kalton-Weis]
  Let Assumption~\assref{ass:main} hold. In addition, let $A$ and $B$ be
  $\ga$-sectorial in $X$ of angles $\om_A^\ga$ and $\om_B^\ga$,
  respectively, and assume $\om_A^\ga+\om_B^\ga<\pi$ and
  $\rho\in(\om_A^\ga,\pi-\om_B^\ga)$. Assume that $A$ has 
  square function estimates and dual square function estimates associated
  with the functions $\varphi_\pm$.

   Then $A+B$ with $D(A+B)=D(A)\cap D(B)$ is closed, and $A(A+B)^{-1}$ extends to a bounded operator on $X$. 
\end{theorem}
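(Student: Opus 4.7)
The plan is to apply Lemma~\ref{lem:key}: it suffices to verify the estimate
\[
  \left| \int_0^\infty \langle \psi_\pm(tB)\,\ph_\pm(tA)x,\, (\ph_\pm(tA))'x' \rangle \,\frac{dt}{t} \right|
  \lesssim \n{x}_X \n{x'}_{X'}
\]
for $x\in D(S)$ and $x'$ in a suitable dense subspace of $X'$, with $\psi_\pm, \ph_\pm$ as in \eqref{eq:def-psi-ph}. Note that $\psi_\pm \in H^\infty(\sect{\si})$ for $\si$ slightly less than $\pi-\om_B^\ga$, and $\ph_\pm \in H^\infty_0(\sect{\si})$ for $\si$ slightly larger than $\om_A^\ga$, so all the relevant functional-calculus objects are well defined.

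First, by the hypothesis that $A$ admits square function estimates associated with $\ph_\pm$, the map $x \mapsto \ph_\pm(\cdot A)x$ belongs to $\cL(X; \gamma(L^2((0,\infty),\frac{dt}t); X))$, i.e.\ $\n{\ph_\pm(\cdot A)x}_\gamma \lesssim \n{x}_X$. Similarly, by the dual square function estimate, $x' \mapsto (\ph_\pm(\cdot A))' x'$ belongs to $\cL(X'; \gamma'(L^2((0,\infty), \frac{dt}t); X'))$ with $\n{(\ph_\pm(\cdot A))'x'}_{\gamma'} \lesssim \n{x'}_{X'}$. Second, by $\gamma$-sectoriality of $B$ of angle $\om_B^\ga < \pi - \rho$, the family $\{\psi_\pm(tB) : t>0\}$ is $\gamma$-bounded (this is just the pointwise evaluation $\la R(\la, -B)$ along the ray $\la = e^{\pm i\rho}/t$, rescaled).

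Next, I would invoke the Kalton--Weis multiplier theorem (see \cite[Thm.~9.5.1]{fabulous4:band2} or \cite[Thm.~5.2]{KalWei2004}): if $M : (0,\infty) \to \cL(X)$ takes values in a $\gamma$-bounded set, then pointwise multiplication $f \mapsto M(\cdot)f(\cdot)$ is a bounded operator on $\gamma(L^2((0,\infty),\frac{dt}t); X)$. Applied to $M(t) = \psi_\pm(tB)$, this yields
\[
  \bignorm{t \mapsto \psi_\pm(tB) \ph_\pm(tA)x}_{\gamma(L^2; X)}
  \lesssim \bignorm{t \mapsto \ph_\pm(tA) x}_{\gamma(L^2; X)}
  \lesssim \n{x}_X.
\]

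Finally, the duality bracket under the integral is exactly the trace-duality pairing between $\gamma(L^2; X)$ and $\gamma'(L^2; X')$, so Cauchy--Schwarz/trace duality gives
\[
  \left| \int_0^\infty \langle \psi_\pm(tB)\,\ph_\pm(tA)x, (\ph_\pm(tA))'x' \rangle \,\frac{dt}{t} \right|
  \le \bignorm{\psi_\pm(\cdot B)\ph_\pm(\cdot A)x}_{\gamma(L^2;X)} \, \bignorm{(\ph_\pm(\cdot A))'x'}_{\gamma'(L^2;X')}
  \lesssim \n{x}_X \n{x'}_{X'}.
\]
Summing over the two rays $\Ga^\pm$, Lemma~\ref{lem:key} then yields that $AS$ extends boundedly to $X$, that $A+B$ with domain $D(A)\cap D(B)$ is closed, and (via Lemmas~\ref{lem:1} and~\ref{lem:2}) that $A(A+B)^{-1}$ is bounded on $X$.

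The only delicate point is the application of the Kalton--Weis multiplier theorem: one needs the family $\{\psi_\pm(tB):t>0\}$ to be $\gamma$-bounded, which is precisely where the $\gamma$-sectoriality hypothesis on $B$ is used (mere uniform boundedness of $\psi_\pm(tB)$, as in the Da~Prato--Grisvard case, would not suffice to act on the $\gamma$-space). Everything else is the interpretation of the scalar integral as a trace-duality pairing, which is handled by the Pettis-integral/Fubini machinery recalled just before the theorem statement.
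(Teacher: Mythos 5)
Your proposal is correct and follows essentially the same route as the paper's proof: both reduce to the estimate of Lemma~\ref{lem:key}, use the hypothesized square function and dual square function estimates for $A$, the $\gamma$-boundedness of $\{\psi_\pm(tB):t>0\}$ from $\gamma$-sectoriality of $B$, and trace duality between $\gamma(L^2;X)$ and $\gamma'(L^2;X')$ together with the $\gamma$-multiplier theorem. Your observation that $\psi_\pm(tB)=e^{\mp i\rho}\mu R(\mu,B)$ with $\mu=-e^{\pm i\rho}/t\notin\Sigma_{\om_B^\gamma+\eps}$ places $\{\psi_\pm(tB)\}$ directly inside the $\gamma$-bounded resolvent family, which is in fact a slightly more immediate justification of \eqref{eq:B-ga-sect} than the paper's appeal to closed convex hulls.
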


\noindent By the preceding discussion, the assumptions on $A$ are satisfied
if $A$ has a bounded $H^\infty$-functional calculus on a Banach space $X$ having finite cotype.

\begin{proof}
  We define the operator $S$ by \eqref{eq:defS-key} and apply
  Lemma~\ref{lem:key}.  Then we estimate
\begin{align*}
 &\ \left|\int_0^\infty \langle \psi_\pm(tB) \ph_\pm(tA)x, \big( \ph_\pm(tA)\big)'x' \rangle\,\frac{dt}t \right| \\
 \le &\ \nn{t\mapsto \psi_\pm(tB) \ph_\pm(tA)x}{\ga(0,\infty;\frac{dt}t,X)}
         \,\,\nn{t\mapsto \ph_\pm(tA)\big)'x'}{\ga(0,\infty,\frac{dt}t,X')} \\
 \le &\ \ga\left(\{\psi_\pm(tB):t>0\}\right) \nn{t\mapsto\ph_\pm(tA)x}{\ga(0,\infty;\frac{dt}t,X)}
         \,\,\nn{t\mapsto \ph_\pm(tA)\big)'x'}{\ga(0,\infty,\frac{dt}t,X')}.
\end{align*}
By $\ga$-sectoriality of $B$ and a standard argument on closed convex
combinations of $\gamma$-bounded sets, we have
\begin{align}\label{eq:B-ga-sect}
  \ga\left(\{\psi_\pm(tB):t>0\}\right) <\infty,
\end{align}
Now
\begin{align*}
 \nn{t\mapsto \ph_\pm(tA)\big)x}{\ga(0,\infty,\frac{dt}t,X)} \lesssim & \ \nn{x}{X}, \\
 \nn{t\mapsto \ph_\pm(tA)\big)'x'}{\ga(0,\infty,\frac{dt}t,X')} \lesssim & \ \nn{x'}{X'}
\end{align*}
allow to conclude.
\end{proof}

\section{Closed operator sums in generalized Triebel-Lizorkin spaces}\label{sec:Tr-L}

We give a result on closed operator sums in the generalized
Triebel-Lizorkin spaces introduced in
\cite{KUll}. Theorem~\ref{thm:closed-lq} may be seen as a counterpart
to the Da~Prato-Grisvard result with real interpolation replaced by
the $\ell^q$-interpolation method from \cite{Ku:lq}. This needs some
preparation. Recall the notation  $L^0(\Om)$ for the equivalence classes
of $(\Om,\mu)$ measurable functions.

\begin{assumption}[BFS]\label{ass:BFS}
  We say that a Banach space $X$ satisfies the
  assumption~\assref{ass:BFS} if $X$ is a Banach function space in the
  sense of \cite{LN-BFS} that has the Fatou property as well as an
  absolutely continuous norm.

  \noindent Recall that a Banach space $(X,\nn{\cdot}{X})$ is a \emph{Banach
    function space} if there exists a
  $\si$-finite measure space $(\Om,\mu)$ with $X\subseteq L^0(\Om)$
  such that
  \begin{enumerate}[label=(\roman*)]
  \item\label{BFS-item-i}
    If $f\in X$ and $g\in L^0(\Omega)$ with $|g|\le|f|$ a.e.,
    then $g\in X$ and $\|g\|_{X}\le\|f\|_{X}$.
  \item\label{BFS-item-ii}
    For every measurable $E\subseteq\Omega$ of positive measure,
    there exists a measurable subset $F\subseteq E$ of positive
    measure with $\eins_F\in X$.
\end{enumerate}
Recall also the notions of the Fatou property
\begin{enumerate}[label=(\roman*),resume]
  \item\label{BFS-item-iii}
    If $(f_n)$ is a sequence in $X$ with $0\le f_n\le f_{n+1}$ and
    $\sup_n\|f_n\|_{X}<\infty$, then $f=\sup_nf_n\in X$ and
    $\|f\|_{X}=\sup_n\|f_n\|_{X}$.
  \end{enumerate}
  finally, recall the notion of absolutely continuous norms for Banach
  function spaces:
\begin{enumerate}[label=(\roman*),resume]
\item\label{BFS-item-iv}
    If $f\in X$ and $(E_n)$ is a decreasing sequence of measurable
    subsets with $\eins_{E_n}\to0$ a.e., then
    \(\|f\,\eins_{E_n}\|_{X}\to0\).
\end{enumerate}
\end{assumption}

Assume that $X$ is such a Banach function space. By, e.g.,
\cite[Prop.~3.12]{LN-BFS}, for a space $X$ with \ref{BFS-item-i} and \ref{BFS-item-ii},
\ref{BFS-item-iv} is equivalent to order-continuity of $X$, and by
\cite[Prop.~3.15]{LN-BFS} this is equivalent to
\[
 X'=\{g\in L^0(\Om): \forall f\in X: fg\in L^1(\Om)\,\}.
\]
In this case, we have by \cite[Prop.~3.1]{LN-BFS} that
\[
 \nn{g}{X'}=\sup\{\int_\Om|fg|\,d\mu: f\in X, \nn{f}{X}\le1\}.
\]
We also remark that, by \cite[Cor.~3.16]{LN-BFS}, a space $X$
satisfying Assumption~\assref{ass:BFS} is reflexive if and only if $X'$
is order-continuous.
Via \cite[Prop.~2.5]{LN-BFS} we can see that Assumption~\assref{ass:BFS}
on $X$ is equivalent to those used in  \cite{Ku:lq,KUll}.

Let $A$ be a sectorial operator of angle $\om_A\in[0,\pi)$ in $X$ with
dense domain and range and let $q\in[1,\infty)$. Then $A$ is
\emph{$\cR_q$-sectorial of angle $\om^{\ell^q}_A\in[\om_A,\pi)$} if, for
any $\om\in(\om_A^{\ell^q},\pi)$, the set
$\{\la R(\la,A):\la\in\C\sm\sect{\om}\}$ is \emph{$\ell^q$-bounded} in
$\cL(X)$. Here, a subset $\cT\subseteq\cL(X)$ is called
\emph{$\ell^q$}-bounded if there exists a constant $C>0$ such that, for
all $n\in\N$ and $x_j\in$, $T_j\in\cT$, $j=1,\ldots,n$, we have
\[
 \Bignorm{\Big(\sum_{j=1}^n|T_jx_j|^q\Big)^{1/q}}_{X} \le C \Bignorm{\Big(\sum_{j=1}^n|x_j|^q\Big)^{1/q}}_{X}.
\]
For simplicity, we state our theorem explicitly for the generalized
Triebel-Lizorkin spaces $X_{\theta,\ell^q}(A)$, $\theta\in(0,1)$,
associated with an $\cR_q$-sectorial operator $A$ in $X$ (introduced
in \cite{KUll} with a different notation) in the case
$0\in\rho(A)$. This is more in the spirit of the original formulation
of the Da~Prato-Grisvard result. More general situations will be
studied elsewhere. In case $0\in\rho(A)$, the spaces from \cite{KUll}
are given by
\[
 X_{\theta,\ell^q}(A):=\{x\in X: t\mapsto t^{-\theta}\phi(tA)x\in X(L^q_*)\}, \quad \theta>0,
\]
where $L^q_*:=L^q(0,\infty;\frac{dt}t)$, with norm
\[
 \nn{x}{X_{\theta,\ell^q}(A)}:=\bignorm{t\mapsto t^{-\theta}\phi(tA)x}_{X(L^q_*)}
 =\Bignorm{\Big(\int_0^\infty |t^{-\theta}\phi(tA)x|^q\,\frac{dt}t\Big)^{1/q}}_{X}.
\]
Here, $\phi\neq0$ is a function in
$H^\infty_0(\sect{\om})$ such that
$z\mapsto z^{-\theta}\phi(z)\in H^\infty_0(\sect{\om})$.

\begin{theorem}\label{thm:closed-lq}
Suppose that $X$ satisfies Assumption~\assref{ass:BFS} and is reflexive.
Let Assumption~\assref{ass:main} hold. In addition, let $A$ and $B$ be $\cR^q$-sectorial in $X$ of angles $\om_A^{\ell^q}$ and $\om_B^{\ell^q}$, respectively, and assume $\om_A^{\ell^q}+\om_B^{\ell^q}<\pi$ and $\rho\in(\om_A^{\ell^q},\pi-\om_B^{\ell^q})$.
In addition, suppose $0\in\rho(A)$. 
Denote by $A_\theta$ and $B_\theta$ the parts of $A$ and $B$, respectively, in $X_{\theta,\ell^q}(A)$. 

Then $A_\theta+B_\theta$ with $D(A_\theta+B_\theta)=D(A_\theta)\cap D(B_\theta)$ is closed in $X_{\theta,\ell^q}(A)$, and $A_\theta(A_\theta+B_\theta)^{-1}$ extends to a bounded operator on $X_{\theta,\ell^q}(A)$. 
\end{theorem}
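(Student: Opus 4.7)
The plan is to apply the key lemma (Lemma~\ref{lem:key}) to the parts $A_\theta$ and $B_\theta$ in $Y:=X_{\theta,\ell^q}(A)$, using the same contour $\Ga$. First I would verify that Assumption~\assref{ass:main} carries over to $(A_\theta,B_\theta)$ on $Y$: sectoriality of the parts with unchanged angles is a consequence of the construction of the spaces $X_{\theta,\ell^q}(A)$ in \cite{KUll}; commutativity of resolvents is inherited from $X$; and density of $D(A_\theta)\cap D(B_\theta)$ in $Y$ is obtained via the truncation operators $U_n$ of \eqref{eq:Un}, since $U_n^{k_0}x\to x$ in $Y$ with $U_n^{k_0}x\in D_{k_0,k_0,k_0,k_0}$ for $x\in Y$. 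The operator $S$ of Definition~\ref{def:S-key} then satisfies the conclusions of Lemma~\ref{lem:propS} relative to $(A_\theta,B_\theta)$, so by Lemma~\ref{lem:key} it suffices to prove \eqref{eq:key-lem-est} with $X$ replaced by $Y$ and $X'$ by $Y'$.

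For this, I would insert the identity factor $t^{-\theta}\cdot t^{\theta}=1$ inside the pairing and apply H\"older in the duality between $X(L^q_*)$ and $X'(L^{q'}_*)$, valid since $X$ has the Fatou property and an absolutely continuous norm. This yields
\[
\left|\int_0^\infty \langle \psi_\pm(tB)\ph_\pm(tA)x,(\ph_\pm(tA))'x'\rangle\,\frac{dt}{t}\right|
\le \nn{t\mapsto t^{-\theta}\psi_\pm(tB)\ph_\pm(tA)x}{X(L^q_*)}\,\nn{t\mapsto t^{\theta}(\ph_\pm(tA))'x'}{X'(L^{q'}_*)}.
\]
The $\cR_q$-sectoriality of $B$ gives $\ell^q$-boundedness of $\{\psi_\pm(tB):t>0\}$, which by a standard pointwise-multiplier argument translates into boundedness of the pointwise action $(f(t))\mapsto(\psi_\pm(tB)f(t))$ on $X(L^q_*)$. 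Thus the first factor is bounded by $\nn{t\mapsto t^{-\theta}\ph_\pm(tA)x}{X(L^q_*)}$, which equals $\nn{x}{Y}$ up to constants by the Littlewood-Paley description of $Y$ together with the independence-of-$\phi$ result from \cite{KUll} (possibly used with a suitable power of $\ph_\pm$ so that $z^{-\theta}\ph_\pm(z)$ itself lies in $H^\infty_0(\sect{\rho})$).

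The principal obstacle is to identify the second factor with $\nn{x'}{Y'}$. This is exactly the role of Appendix~\ref{app:dual-lq}: there one establishes a Littlewood-Paley description of the dual space, namely $\nn{x'}{Y'}\simeq \nn{t\mapsto t^{\theta}(\ph_\pm(tA))'x'}{X'(L^{q'}_*)}$, playing the role that the duality $(\dot{X}_{-1}(A),\dot{X}_1(A))'_{\einhalb,p}=(\dot{X}_{-1}(A)',\dot{X}_1(A)')_{\einhalb,p'}$ played in the proof of the Da~Prato-Grisvard result. Reflexivity of $X$ is needed here to ensure that $X'$ again satisfies Assumption~\assref{ass:BFS} and that the dual $\ell^q$-interpolation scheme is well-behaved on $X'$. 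Once these pieces are available, combining the two estimates yields \eqref{eq:key-lem-est} in $Y$, and Lemma~\ref{lem:key} closes the argument.
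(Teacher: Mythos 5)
Your overall strategy is the paper's: transfer Assumption~\assref{ass:main} to the parts in $Y=X_{\theta,\ell^q}(A)$, invoke Lemma~\ref{lem:key}, pull $\psi_\pm(tB)$ out by $\ell^q$-boundedness, and identify the two remaining factors with $\nn{x}{Y}$ and $\nn{x'}{Y'}$ via \cite{KUll} and Proposition~\ref{prop:dual-TL}. The gap is in how you introduce the weights. Inserting $t^{-\theta}t^{\theta}$ attaches the weight directly to $\ph_\pm$, but $\ph_\pm(z)=z^{1/4}(-e^{\pm i\rho}+z)^{-1/2}$ decays only like $z^{1/4}$ at $0$ and $z^{-1/4}$ at $\infty$. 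Hence $z^{-\theta}\ph_\pm(z)$ is unbounded near $0$ and $z^{\theta}\ph_\pm(z)$ is unbounded near $\infty$ as soon as $\theta\ge\frac14$, and then the factors $\nn{t\mapsto t^{-\theta}\ph_\pm(tA)x}{X(L^q_*)}$ and $\nn{t\mapsto t^{\theta}(\ph_\pm(tA))'x'}{X'(L^{q'}_*)}$ are in general infinite (already for a scalar $A=a>0$ the integral $\int_0^1 t^{(1/4-\theta)q}\,\frac{dt}{t}$ diverges), so neither can be equivalent to $\nn{x}{Y}$, resp. $\nn{x'}{Y'}$. The independence-of-$\phi$ results require the \emph{weighted} function ($z^{-\theta}\phi(z)$, resp. $z^{\theta-1}\phi(z)$ after factoring out $(A')_{-1}^{-1}$ as in Proposition~\ref{prop:dual-TL}) to lie in $H^\infty_0(\sect{\rho})$, which fails here; and your parenthetical fix of using ``a suitable power of $\ph_\pm$'' is not available, because the estimate \eqref{eq:key-lem-est} demanded by Lemma~\ref{lem:key} involves $\ph_\pm$ itself, not a power of it. As written, your argument only covers $\theta<\frac14$.

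The paper's proof circumvents exactly this point: instead of inserting powers of $t$, it moves a fractional power of $A$ across the (unweighted) $X$--$X'$ pairing, writing the integrand as $\langle\psi_\pm(tB)\ph_\pm(tA)A^{\theta}x,(\ph_\pm(tA))'(A')^{-\theta}x'\rangle$, applies H\"older and the $\cR_q$-bound, then uses the change-of-$\phi$ result \cite[Prop.~4.5]{KUll} at the \emph{unweighted} level to replace $\ph_\pm$ by $\phi(z)=z^2(1+z)^{-4}$, and only afterwards converts $A^{\theta}$ and $(A')^{-\theta}$ into the weights $t^{-\theta}$ and $t^{\theta-1}$ paired with $\phi_\theta(z)=z^{\theta}\phi(z)$ and $\phi_{1-\theta}(z)=z^{1-\theta}\phi(z)$, which do satisfy the admissibility conditions and match the definition of $\nn{\cdot}{Y}$ and the dual norm from Proposition~\ref{prop:dual-TL}. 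To repair your argument, perform the change of auxiliary function before the weight appears (or equivalently adopt the $A^{\theta}$-transfer), and note that this also requires working with suitably regular $x,x'$ so that $A^{\theta}x$ and $(A')^{-\theta}x'$ are defined, as the paper does.
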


\noindent We remark that, in the situation of Theorem~\ref{thm:closed-lq}, we
have
\[
 D(A_\theta)=A^{-1}\big(X_{\theta,\ell^q}(A)\big)=X_{\theta+1,\ell^q}(A)\quad\mbox{and}\quad
 D(B_\theta)=(1+B)^{-1}\big(X_{\theta,\ell^q}(A)\big).
\]

\begin{proof}
  We first note that Assumption~\assref{ass:main} also holds for
  $X_{\theta,\ell^q}(A)$ and $A_\theta$, $B_\theta$ in place of $X$, and
  $A$, $B$.  We then define the operator $S$ by \eqref{eq:defS-key}
  with $A_\theta$, $B_\theta$ in place of $A$, $B$, and shall apply
  Lemma~\ref{lem:key}. We represent duality between
  $X_{\theta,\ell^q}(A)$ and $(X_{\theta,\ell^q}(A))'$ via
  Proposition~\ref{prop:dual-TL} in Appendix~\ref{app:dual-lq}. For
  nice enough $x\in D(A)\cap D(S)$ and $x'\in D(A')$ we can thus work
  in $\dual{\cdot}{\cdot}=\idual{\cdot}{\cdot}{X,X'}$.
  Then we estimate
\begin{align*}
 &\  \left|\int_0^\infty \langle \psi_\pm(tB) \ph_\pm(tA)x, \big( \ph_\pm(tA)\big)'x' \rangle\,\frac{dt}t \right|    \\
 =&\ \left|\int_0^\infty \langle \psi_\pm(tB) \ph_\pm(tA)A^\theta x, \big( \ph_\pm(tA)\big)'(A')^{-\theta} x' \rangle\,\frac{dt}t \right| \\
 \le &\ \bignorm{t\mapsto \psi_\pm(tB) \ph_\pm(tA)A^\theta x}_{X(L^q_*)}
         \,\,\bignorm{t\mapsto (\ph_\pm(tA)\big)'(A')^{-\theta}x'}_{X'(L^{q'}_*)} \\
 \le &\ \mathcal{R}_q\left(\{\psi_\pm(tB):t>0\}\right) \bignorm{t\mapsto\ph_\pm(tA)A^\theta x}_{X(L^q_*)}
         \,\,\bignorm{t\mapsto (\ph_\pm(tA)\big)'(A')^{-\theta}x'}_{X'(L^{q'}_*)}.
\end{align*}
Since $A$ is $\cR_q$-sectorial in $X$ we may, by
\cite[Prop.~4.5]{KUll} and at the expense of a constant, change
$\ph_\pm$ in the last line to $\phi\in H^\infty_0$ given by
$\phi(z):=z^2(1+z)^{-4}$, i.e. we obtain
\[
 \lesssim  \mathcal{R}_q\left(\{\psi_\pm(tB):t>0\}\right) \bignorm{t\mapsto\phi(tA)A^\theta x}_{X(L^q_*)}
         \,\,\bignorm{t\mapsto (\phi(tA)\big)'(A')^{-\theta}x'}_{X'(L^{q'}_*)}.
\]
By $\cR^q$-sectoriality of $B$ we have 
\begin{align}\label{eq:B-lq-sect}
 \mathcal{R}_q\left(\{ \psi_\pm(tB) :t>0\}\right) <\infty.
\end{align}
We define $\phi_{\theta},\phi_{1-\theta}\in H^\infty_0$ by $\phi_{\theta}(z):=z^{\theta}\phi(z)$, $\phi_{1-\theta}(z)=z^{1-\theta}\phi(z)$ and we rewrite
\begin{align*}
 \phi(tA)A^\theta x =&\  t^{-\theta} (tA)^\theta \phi(tA) x = t^{-\theta} \phi_\theta(tA) x \\
 \phi(tA')(A')^{-\theta} x' =&\  t^{\theta-1} (tA')^{1-\theta} \phi(tA') (A')^{-1} x' 
 = t^{\theta-1} \phi_{1-\theta}(tA') (A')_{-1}^{-1} x'. 
\end{align*}
Then we observe that
\begin{align*}
 \bignorm{t\mapsto t^{-\theta} \phi_\theta(tA)x}_{X(L^q_*)} \simeq \nn{x}{X_{\theta,\ell^q}(A)}
\end{align*}
by definition and 
 \begin{align*}
 \bignorm{t\mapsto t^{\theta-1} \phi_{1-\theta}(tA') (A')_{-1}^{-1} x'}_ {X'(L^{q'}_*)} \simeq \nn{x'}{(X_{\theta,\ell^q}(A))'}
\end{align*}
by Proposition~\ref{prop:dual-TL}. This finishes the proof.
\end{proof}

\begin{remark}\label{eq:TL-rem}
  In the case $q=2$, by Khintchine and Kahane's inequality,
  $X(L^2(I)) \simeq \gamma(I, X)$ so that 
  Theorem~\ref{thm:closed-lq} recovers the Kalton-Weis result.
\end{remark}

We illustrate our result with an application in a maximal regularity context.

\begin{prop}
  Let $E$ be a reflexive Banach function space (see
  Assumption~\assref{ass:BFS}), let $1<q<\infty$, and let $A_0$ be an
  $\cR_q$-sectorial operator in $E$ with dense domain and range,
  $0\in\rho(A_0)$, and $\om_{A_0}^{\ell^q}<\pi/2$. Let $X=E(L^q(\R_+))$
  and, for fixed $\theta\in(0,1)$, let
\[
Y_\theta:=\{f:\Om\times\R_+\to\C:
 \nn{f}{Y_\theta}:=\Bignorm{\left(\int_0^\infty\int_0^\infty |s^{-\theta}\ph(sA)(f(\cdot,t))|^q\,dt\frac{ds}s\right)^{1/q} }_{E}<\infty\},
 \]
 where the $f$ are measurable. Denote by $(A_0)_\theta$ the part of $A_0$ in
 $E_{\theta,\ell^q}(A_0)$.
 Then, for any $f\in Y_\theta$, the abstract Cauchy problem
 \[
 u'(t)+(A_0)_\theta u(t)= f(t), \quad t>0,\qquad u(0)=0,
 \]
has a unique mild solution $u$, and this solution satisfies $u',(A_0)u\in Y_\theta$. 
\end{prop}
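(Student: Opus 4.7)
The strategy is to reformulate the Cauchy problem as a closed-operator-sum question on a lifted space and apply Theorem~\ref{thm:closed-lq}. First I would lift $A_0$ and $d/dt$ to $X := E(L^q(\R_+))$ by setting $(Af)(\om,t) := (A_0\,f(\cdot,t))(\om)$ with the natural maximal domain, and $B := d/dt$ on $X$ with the boundary condition $u(0)=0$. Because the ``outer'' $L^q$ in the Littlewood--Paley variable appearing in the definition of $X_{\theta,\ell^q}(A)$ shares its exponent with the ``inner'' $L^q(\R_+)$ hidden inside $X$, a Fubini swap identifies $Y_\theta = X_{\theta,\ell^q}(A)$ isometrically, so $(A_0)_\theta$ from the proposition is precisely the part of $A$ in this space in the sense of Theorem~\ref{thm:closed-lq}.

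Next I would verify the hypotheses of Theorem~\ref{thm:closed-lq} for $(A,B)$ on $X$. Since $A$ acts coordinate-wise in $t$ in the $E$-direction, the $\cR_q$-bound for $\{\la R(\la,A_0)\}$ on $E$ transfers to $\{\la R(\la,A)\}$ on $X$ by interpreting the $L^q(dt)$-integral in the norm of $X$ as an additional $\ell^q$-summation that merges with the given one, i.e.\ treating the ``doubled'' index $(j,t)$ as a single family to which $\cR_q$-boundedness on $E$ applies; this yields $\cR_q$-sectoriality of $A$ on $X$ of the same angle $\om_{A_0}^{\ell^q} < \pi/2$, and $0\in\rho(A)$ is preserved. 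For $B = d/dt$, the scalar right-translation semigroup gives sectoriality of angle $\pi/2$ on $L^q(\R_+)$ with \emph{uniform} resolvent bounds; once $B$ is extended pointwise in $\om$ to $X$, the $\ell^q$ required in the definition of $\cR_q$-sectoriality reduces to these scalar $L^q(\R_+)$-bounds (because the $\ell^q$-sum and the $L^q(dt)$-integral collapse into a single $L^q$-integration that is controlled uniformly in~$\om$), so $B$ is $\cR_q$-sectorial of angle exactly $\pi/2$ on $X$. Resolvents of $A$ and $B$ commute trivially (they act on different variables), $\om_A^{\ell^q} + \om_B^{\ell^q} < \pi$, and the density of domains and ranges follows from the corresponding densities for $A_0$ on $E$ and $B$ on $L^q(\R_+)$, so Assumption~\assref{ass:main} holds at the $\cR_q$-level.

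Theorem~\ref{thm:closed-lq} then produces that $A_\theta + B_\theta$ with domain $D(A_\theta)\cap D(B_\theta)$ is closed and injective on $Y_\theta$, with $A_\theta(A_\theta+B_\theta)^{-1}$ (and hence $B_\theta(A_\theta+B_\theta)^{-1}$) bounded on $Y_\theta$. To translate this back to the Cauchy problem, I would consider the mild solution $u(t) := \int_0^t e^{-(t-s)(A_0)_\theta} f(s)\,ds$, which is well-defined because $-(A_0)_\theta$ generates a bounded analytic semigroup on $Y_\theta$ inherited from $-A_0$ on $E$. On a dense subspace of $Y_\theta$ (say, functions $f$ of compact support in $t$ with values in a smooth core of $(A_0)_\theta$), a direct computation shows $u \in D(A_\theta)\cap D(B_\theta)$ with $(A_\theta + B_\theta)u = f$; the a priori estimate $\nn{u'}{Y_\theta} + \nn{(A_0)u}{Y_\theta} \lesssim \nn{f}{Y_\theta}$ provided by Theorem~\ref{thm:closed-lq}, combined with the closedness of $A_\theta + B_\theta$, extends to every $f \in Y_\theta$ by density, and uniqueness follows from the injectivity of $A_\theta + B_\theta$.

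The main obstacle I expect is the middle paragraph: the careful bookkeeping that matches the $\ell^q$ in the definition of $\cR_q$-boundedness with the $L^q$-exponent embedded in the mixed-norm space $X = E(L^q(\R_+))$, so that the scalar and lifted bounds really line up with the correct angles. Everything else is essentially an invocation of Theorem~\ref{thm:closed-lq} together with the standard passage from closedness of the operator sum to maximal regularity for the abstract Cauchy problem.
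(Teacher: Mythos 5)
Your proposal follows essentially the same route as the paper: lift $A_0$ and $d/dt$ to $X=E(L^q(\R_+))$, transfer the $\cR_q$-bounds (for $A$ by merging the $L^q(dt)$-integral with the $\ell^q$-sum over a discretized family — the paper does this via step functions in $t$ plus the Fatou property — and for $B$ by reducing to uniform scalar resolvent bounds on $L^q(\R_+)$, where the paper instead dominates by the Hardy--Littlewood maximal function, though your collapse argument also works), then apply Theorem~\ref{thm:closed-lq} and identify $Y_\theta=X_{\theta,\ell^q}(A)$ by Fubini. The only point you leave unaddressed is the verification that $X=E(L^q(\R_+))$ itself satisfies Assumption~\assref{ass:BFS} and is reflexive (the paper's Step~1), which is a hypothesis of Theorem~\ref{thm:closed-lq}; this is routine but should be stated.
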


The interpretation is that $(A_0)_\theta$ has maximal
$Y_\theta$-regularity where we understand $Y_\theta$ as a realization
of $E_{\theta,\ell^q}(A_0)(L^q(\R_+))$.

\begin{proof}
  Step 1: By \cite[Prop. III.3.16]{DS:band1} elements of $X$ have
  measurable representants on $\Om\times\R_+$, we check that $X$ is
  reflexive and is a Banach function space, according to
  Assumption~\assref{ass:BFS}.

  \noindent Property \ref{BFS-item-i} is clear. By \cite[Prop.~2.5]{LN-BFS} the saturation
  property \ref{BFS-item-ii} for $E$ is equivalent to the existence of an
  increasing sequence $(F_n)$ of measurable subsets of $\Om$ with
  $\eins_{F_n}\in E$ and $\bigcup_n F_n=\Om$. Then $(F_n\times(1/n,n))$ is
  increasing and satisfies $\eins_{F_n\times(1/n,n)}\in X$ and
  $\bigcup_n (F_n\times(1/n,n))=\Om\times(0,\infty)$, which proves
  \ref{BFS-item-ii} for $X$. Since $E$ is reflexive and $q\in(1,\infty)$ we have
  $X'=E'(L^{q'}(\R_+))$ and see easily that $X$ is reflexive. By
  \cite[Cor.~3.16]{LN-BFS}, both $X$ and $X'$ have the Fatou property
  \ref{BFS-item-iii} and are order-continuous. By \cite[Prop.~3.12]{LN-BFS}
  again, $X$ (and $X'$) have absolute continuous norm, i.e. they
  satisfy \ref{BFS-item-iv}.

\smallskip\noindent Step 2: Define the linear operator $A$ in $X$ by
$(Au)(\om,t):=(A_0 u(\cdot,t))(\om)$ for $u\in D(A)$ where
\[
 D(A)=\{u\in X: u(\cdot,t)\in D(A_0)\ \mbox{for a.e. $t\in\R_+$}, Au\in X\}.
\]
We check that $A$ is $\cR_q$-sectorial in $X$ with
$\om_{A}^{\ell^q}=\om_{A_0}^{\ell^q}<\pi/2$: By $\cR_q$-sectoriality of
$A_0$ in $E$ (used for singletons) we first see that $A$ is sectorial
in $X$ of angle $\om_{A_0}^{\ell^q}$ and that this angle is optimal. Now
let $\si\in(\om_{A_0}^{\ell^q},\pi)$ and take a finite number of
$\la_j\in\C\sm\Si_\si$ and functions $f_j$ of the form
\[
 f_j(\cdot, t)=\sum_k f_{jk} \eins_{I_{jk}}(t),
 \]
 where each sum is finite, the $(I_{jk})_k$ are bounded pairwise disjoint intervals in $(0,\infty)$, and $f_{jk}\in E$. Then we have
\begin{align*}
  \bignorm{\big(\sum_j|\la_j R(\la_j,A)f_j|^q\big)^{1/q}}_{X}
  & =\ \Bignorm{\Big(\int_0^\infty\sum_j
    |\la_j R(\la_j,A_0)f_{jk} \eins_{I_{jk}}(t)|^q\,dt\Big)^{1/q}}_{E} \\
  & =\ \Bignorm{\Big(\sum_{j,k}
    \big|\la_j R(\la_j,A_0)\big(|I_{jk}|^{1/q}f_{jk}\big)\big|^q \Big)^{1/q}}_{E} \\
  & \le\ \cR_q\big(\{\la R(\la,A_0):\la\in\C\sm\Si_\si\,\}\big)
    \Bignorm{\Big(\sum_{j,k}
    \big|\big(|I_{jk}|^{1/q}f_{jk}\big)\big|^q \Big)^{1/q}}_{E} \\
  &=\ \cR_q\big(\{\la R(\la,A_0):\la\in\C\sm\Si_\si\,\}\big)  \bignorm{\big(\sum_j|f_j|^q\big)^{1/q}}_{X}.
\end{align*}
From this the claim follows by approximation and the Fatou property.

\smallskip\noindent Step 3: Define $B=\frac{d}{dt}$ in $X$ by $(Bu)(\om,t):=(\partial_t u(\om,\cdot))(t)$ for $u\in D(B)$ where
\[
 D(B)=\{u\in X: u(\om,\cdot)\in W^{1,q}(\R_+), u(\om,0)=0\
 \mbox{for a.e. $\om\in\Om$}, Bu\in X\}.
\]
We claim that $B$ is $\cR_q$-sectorial in $X$ with $\om_B^{\ell^q}=\pi/2$. So let $\si\in(0,\pi/2)$. First we observe that, for $\la\in\Si_\si\sm\{0\}$ and $g\in L^q(\R_+)$, the solution of $\la f(t)+f'(t)=g(t)$, $t\ge0$, with $f(0)=0$ is given by $g(t)=\int_0^t e^{-\la(t-s)}g(s)\,ds$, $t\ge0$, i.e. by convolution of $g$ with $e^{-\la(\cdot)}\in L^1(\R_+)$. For a finite family $(\la_j)$ in $\Si_\si\sm\{0\}$ and $f_j\in X$ we thus have
\begin{align*}
  \Bignorm{ \Big(\sum_j|\la_j(\la_j+B)^{-1}f_j|^q\Big)^{1/q}}_{X}
  &=\ \Bignorm{\Big(\int_0^\infty
    \sum_j|\la_j (e^{-\la_j(\cdot)}*f_j)(t) |^q\,dt\Big)^{1/q}}_{E} \\
  &\le\ \Bignorm{\Big(\int_0^\infty
    \sum_j\big(|\la_j| e^{-\Rep\la_j(\cdot)}\big)*|f_j|(t) |^q\,dt\Big)^{1/q}}_{E} \\
  & \le\ \frac1{\cos\si}\,
  \Bignorm{\Big(\sum_j\int_0^\infty (Mf_j)(t)^q\,dt\Big)^{1/q}}_{E},
\end{align*}
where $M$ denotes the Hardy-Littlewood maximal operator on $\R_+$. Boundedness of $M$ on $L^q(\R_+)$ (recall $1<q<\infty$) allows to conclude.

\smallskip\noindent Step 4: It is nearly obvious that resolvents of
$A$ and $B$ commute on $X$.  An application of
Theorem~\ref{thm:closed-lq} now yields that
$A_\theta(A_\theta+B_\theta)^{-1}$ and
$B_\theta(A_\theta+B_\theta)^{-1}$ are bounded in
$X_{\theta,\ell^q}(A)$. This yields the claim since
$X_{\theta,\ell^q}(A)=Y_\theta$ by Fubini.
\end{proof}

\section{The Dore-Venni result}\label{sec:Do-V}

Let $A$ be a sectorial operator of angle $\om_A\in[0,\pi)$ in a Banach
space $X$ with dense domain and range. Then $A$ is said to have
\emph{bounded imaginary powers} (BIP) if there exist constants $M>0$
and $\theta\ge0$ such that
\[
 \nn{A^{it}}{\cL(X)}\le M e^{\theta|t|},\quad t\in\R.
\]
We denote by $\theta_A\ge0$ the infimum of all such $\theta$ and call it the \emph{BIP angle} of $A$. We always have $\theta_A\ge\om_A$, see e.g. \cite[Cor.~4.3.4]{fc-book}

We formulate the result we shall prove. The original result by Dore
and Venni (\cite{Do-V}) assumed $0\in\rho(A)\cap\rho(B)$.  This
assumption has been removed by Pr\"uss and Sohr (\cite{Pr-So}). The
proof in \cite{Pr-So} is done by considering $\eps+A$ and $\eps+B$ and
their imaginary powers and then letting $\eps\to0+$. The key of the
proof is then, of course, to obtain estimates that are uniform in
$\eps\in(0,1)$.  There is another proof by Haase (\cite{Ha:groups}),
based on functional calculus for commuting groups and strip type
operators and building on previous work by Monniaux (\cite{Mo}), who
used so-called analytic generators.

The proof we shall give below is quite different. In line with the
general idea of our approach we are emphasizing the aspects common
with the other operator sum results. Here, the related interpolation
method is the complex method. In order to see this, we have to rely on
the results on BIP, the complex method, the Mellin transform
$\mathscr{M}$, and the Gamma function, presented in the appendix. In
fact, $A$ having BIP is equivalent to the analog of
\eqref{eq:normXeqsim}, see Proposition~\ref{prop:BIP-sqf}. The BIP
property for $B$ and the UMD property of $X$ together are needed to
verify the analog of \eqref{eq:B-bdd} in the proof, which is done in
Lemma~\ref{lem:G*-L2bdd} below. Thus these conditions together
correspond to conditions \eqref{eq:B-sect}, \eqref{eq:B-ga-sect}, or
\eqref{eq:B-lq-sect} in the previous sections.

Unfortunately but also unavoidably, somehow ``again the Mellin
transform [is] all over the place'' (\cite[p.~530]{Ha:groups}), but we
do not need to know much, which justifies to call it ``cheap'' again.

\begin{theorem}\label{thm:DV}
  Let Assumption~\assref{ass:main} hold. In addition, let 
  $A$ and $B$ have BIP of angles $\theta_A$ and $\theta_B$, respectively, 
  with $\theta_A+\theta_B<\pi$ and assume $\rho\in(\theta_A,\pi-\theta_B)$.
  Assume that $X$ is a UMD space. 

  Then $A+B$ with domain $D(A+B)=D(A)\cap D(B)$ is a closed, and
  $A(A+B)^{-1}$ extends to a bounded operator on $X$. 
\end{theorem}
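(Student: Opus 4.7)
I follow the same recipe as in Sections~\ref{sec:DaP-G} and \ref{sec:Ka-W}: define $S$ by \eqref{eq:defS-key} and invoke Lemma~\ref{lem:key}, so that the entire task reduces to verifying the pairing estimate \eqref{eq:key-lem-est} in the present BIP/UMD setting. The natural change of variable $t = e^s$ turns $\int_0^\infty\cdot\,\frac{dt}{t}$ into $\int_\R\cdot\,ds$, which is the ``Mellin side'' on which BIP, Mellin/Fourier analysis, and the complex interpolation method interact cleanly; this is exactly the framework developed in Appendix~\ref{app:complex}.

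For the $A$-factor, I replace the real-interpolation norm equivalence \eqref{eq:normXeqsim} used in the Da~Prato--Grisvard proof by its BIP counterpart provided by Proposition~\ref{prop:BIP-sqf}, which says that BIP of $A$ is equivalent to an $L^2$-type norm equivalence of the form
\[
 \|x\|_X \simeq \|t\mapsto \phi_\pm(tA)x\|_{L^2(0,\infty;\frac{dt}{t};X)}
\]
(in the complex-interpolation sense worked out in Appendix~\ref{app:complex}), together with its dual version giving the analogous control of $\|t\mapsto(\phi_\pm(tA))'x'\|$ by $\|x'\|_{X'}$. These two bounds play the roles of the $L^p$ and $L^{p'}$ norm equivalences used in Section~\ref{sec:DaP-G}.

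For the $B$-factor, the pointwise bound \eqref{eq:B-sect} (or its $\gamma$/$\ell^q$-analogs \eqref{eq:B-ga-sect}, \eqref{eq:B-lq-sect}) has to be upgraded to a \emph{multiplier} bound: the map $f\mapsto \psi_\pm(\cdot B)f(\cdot)$ must be bounded on $L^2(\R_+;\frac{dt}{t};X)$. After $t=e^s$ this is an operator-valued Fourier multiplier statement on $L^2(\R;X)$, and the symbol is tame thanks to BIP of $B$. Its boundedness is precisely Lemma~\ref{lem:G*-L2bdd} of Appendix~\ref{app:complex}, whose proof uses the UMD property of $X$ through an operator-valued Mihlin-type theorem. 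Chaining the three ingredients via H\"older on the Mellin side yields \eqref{eq:key-lem-est}, and Lemma~\ref{lem:key} then delivers both closedness of $A+B$ and boundedness of $A(A+B)^{-1}$.

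The main obstacle is exactly this last step: passing from a pointwise (or $\gamma$-, or $\ell^q$-) bound for $\{\psi_\pm(tB):t>0\}$ to a Fourier-multiplier bound on $L^2(\R;X)$. Pointwise sectoriality of $B$ is not enough once one works with the $L^2$-valued square function forced by BIP, and the multiplier step is what requires $X$ to be UMD. From the present perspective this explains transparently why UMD is the natural hypothesis in the Dore--Venni theorem: it is the minimal geometric assumption turning the BIP-driven Littlewood--Paley norm equivalence into a usable operator-valued multiplier calculus, at which point the proof proceeds on exactly the same lines as in Sections~\ref{sec:DaP-G} and \ref{sec:Ka-W}.
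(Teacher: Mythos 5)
Your high-level outline---reduce to \eqref{eq:key-lem-est} via Lemma~\ref{lem:key}, pass to the Mellin side, use BIP of $A$ for the two $\varphi_\pm$-factors and use UMD of $X$ to handle the $\psi_\pm(tB)$-factor, with Lemma~\ref{lem:G*-L2bdd} as the key technical lemma---is exactly the paper's strategy. But the middle of your argument contains a genuine error about what BIP delivers, and this is not just imprecision: the displayed norm equivalence is false in general Banach spaces, and if it were true your own account of the $B$-step would make UMD unnecessary, so the argument as written contradicts itself. BIP of $A$ does \emph{not} give $\|x\|_X\simeq\|t\mapsto\varphi_\pm(tA)x\|_{L^2(0,\infty;\frac{dt}t;X)}$. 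What Proposition~\ref{prop:BIP-sqf}~(b) (with $q=2$) gives is a bound for the Mellin transform on the imaginary axis,
\[
 \|x\|_X\simeq\Bignorm{\sigma\mapsto\int_0^\infty t^{i\sigma}\varphi_\pm(tA)x\,\frac{dt}t}_{L^2(\R;X)},
\]
i.e.\ an $L^2(\R;X)$ bound for $\mathscr{M}(\varphi_\pm(tA)x)(i\cdot)$, and there is no Plancherel theorem that transfers this back to the $t$-side outside Hilbert spaces. The paper therefore moves the whole pairing to the Mellin side via the scalar-valued Parseval identity \eqref{eq:dual-mellin}, which is safe in any Banach space, and estimates the resulting integral by a product of two $L^2(\R;\cdot)$ norms of Mellin transforms.

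This changes the nature of the $B$-factor step in a way your proposal misdescribes. On the $t$-side the map $f\mapsto(t\mapsto\psi_\pm(tB)f(t))$ is pointwise multiplication, bounded on $L^2(\R_+;\frac{dt}t;X)$ trivially by $\sup_t\|\psi_\pm(tB)\|$; and the substitution $t=e^s$ leaves it a pointwise multiplier, it does not turn it into a Fourier multiplier. What does need UMD is the \emph{conjugate} operator on the Mellin side: by \eqref{eq:mellin-conv}, multiplication by $\psi_\pm(tB)$ becomes convolution with the $\cL(X)$-valued kernel $G(\sigma)=\mathscr{M}(\psi_\pm(tB))(i\sigma)$, which has a $1/\sigma$-singularity at the origin, and one must bound $f\mapsto G*f$ on $L^2(\R;X)$. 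That is precisely Lemma~\ref{lem:G*-L2bdd}, whose proof uses the UMD-boundedness of the Hilbert transform on $L^2(\R;X)$ (after localizing and commuting with $B^{i\sigma}$), not a Mihlin theorem for pointwise multipliers. Your last paragraph gestures at this, but to make the proof work you need to replace the false $t$-side $L^2(\frac{dt}t;X)$ square-function equivalence with the Mellin-$L^2$ equivalence and recognize the $B$-factor as a convolution operator on the Mellin side, not a pointwise operator that ``becomes'' a multiplier under $t=e^s$.
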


\begin{proof}
  We define the operator $S$ by \eqref{eq:defS-key} and apply
  Lemma~\ref{lem:key}.  We have to show the estimate
  \eqref{eq:key-lem-est}. We shall only show the estimate for
\begin{align}\label{eq:DV+case}
 \left|\int_0^\infty \langle \psi_+(tB) \ph_+(tA)x, 
                                                \big( \ph_+(tA)\big)'x' \rangle\,\frac{dt}t \right|,       
\end{align}
in detail, the other case with $(\psi_-,\ph_-)$ in place of
$(\psi_+,\ph_+)$ being similar.  By \eqref{eq:dual-mellin}, the term
\eqref{eq:DV+case} equals, up to a constant,
\begin{align}\label{eq:L2-est-prod}
    & \left|\int_\R \langle \mathscr{M}(\psi_+(tB)\ph_+(tA)x)(i\si), \mathscr{M}(\ph_+(tA)'x')(-i\si)\rangle \,d\si\right| \notag \\
\le & \nn{ \mathscr{M}(\psi_+(tB)\ph_+(tA)x)(i\cdot)}{L^2(\R;X)} \nn{ \mathscr{M}(\ph_+(tA)'x')(-i\cdot)}{L^2(\R;X')}.
\end{align}
By Lemma~\ref{lem:mellin-ph} we have
\begin{align*}
 \mathscr{M}(\ph_+(tA)x)(i\si)& %\int_0^\infty t^{i\si}\ph_+(tA)x\,\frac{dt}t
 =\frac{(-e^{i\rho})^{i\si-1/4}}{\sqrt{\pi}} \Ga(\mbox{$\frac14$}+i\si)\Ga(\mbox{$\frac14$}-i\si) A^{-i\si}x \\
 & = e^{-i(\pi-\rho)/4} \, e^{-\si(\pi-\rho)} \, \Ga(\mbox{$\frac14$}+i\si)\Ga(\mbox{$\frac14$}-i\si) A^{-i\si}x.
\end{align*}
Since $A$ has BIP in $X$ of angle $\theta_A$, we obtain via Lemma~\ref{lem:Gamma-funktion} that
\begin{align*}
 \nn{\mathscr{M}(\ph_+(tA)x)(i\si)}{X}\lesssim e^{-\rho|\si|} \, e^{\tilde{\theta}_A|\si|} \, \nn{x}{X},
\end{align*}
where $\tilde{\theta}_A-\theta_A>0$ is as small as we wish. By choice of $\rho>\theta_A$ we thus conclude
\begin{align}\label{eq:L2-est-phi}
 \nn{ \mathscr{M}(\ph_+(tA)x)(i\cdot)}{L^2(\R;X)} \lesssim \nn{x}{X}.
\end{align}
Since $A'$ has BIP in $X'$ of angle $\theta_A$ as well, we conclude similarly %have by Proposition~\ref{prop:BIP-sqf}
\begin{align}\label{eq:L2-est-dual}
 \nn{ \mathscr{M}(\ph_+(tA)'x')(-i\cdot)}{L^2(\R;X')} \lesssim \nn{x'}{X'}.
\end{align}
In order to estimate the first factor in \eqref{eq:L2-est-prod} we
recall \eqref{eq:mellin-conv} : we have to convolve the $X$-valued
function $\mathscr{M}(\ph_+(tA)x)(i\cdot)$ with the $\cL(X)$-valued
function $\mathscr{M}(\psi_+(tB))(i\cdot)$.  By
Lemma~\ref{lem:mellin-ph} we have
\[
 \mathscr{M}(\psi_+(tB)x)(i\si) = % \ \int_0^\infty t^{i\si}\psi_+(tB)x\,\frac{dt}t 
  e^{i\rho(i\si-1)}\,\frac{\pi}{\sin(\pi i\si)} B^{-i\si}x  
  = e^{-i\rho} \, e^{-\rho\si} \frac\pi{i \sinh(\pi\si)} B^{-i\si}x.
\]
Hence the strongly continuous operator-valued function
\begin{equation}\label{eq:def-G}
  G:\R\sm\{0\} \to \cL(X),\quad \si\mapsto\mathscr{M}(\psi_+(tB))(i\si)
    = e^{-i\rho} \, e^{-\rho\si} \frac\pi{i \sinh(\pi\si)} B^{-i\si}
\end{equation}
has a singularity at the origin. Nevertheless, the following
Lemma~\ref{lem:G*-L2bdd} allows to finish the proof. Of course, this
is where the UMD assumption on $X$ enters.
\end{proof}

\begin{lemma}\label{lem:G*-L2bdd}
  Under the assumptions of Theorem~\ref{thm:DV} and for
  $\rho\in(\theta_A,\pi-\theta_B)$ let $G$ be given by
  \eqref{eq:def-G}. Then the operator $f\mapsto G*f$ is bounded on
  $L^2(\R;X)$.
\end{lemma}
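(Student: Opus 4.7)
My plan is to reduce the boundedness of $f\mapsto G*f$ to the boundedness of the Hilbert transform on $L^2(\R;X)$, which is available since $X$ is UMD. The BIP of $B$ together with the assumption $\theta_B+\rho<\pi$ is what allows the reduction.

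First I would split the kernel. Choose $\chi\in C_c^\infty(\R)$ with $\chi\equiv1$ near $0$, and write $G=G_s+G_r$, where
\[
G_s(\sigma):=\frac{e^{-i\rho}\chi(\sigma)}{i\sigma}\,B^{-i\sigma},\qquad G_r:=G-G_s.
\]
The point of the decomposition is that near the origin the scalar factor $e^{-\rho\sigma}\pi/\sinh(\pi\sigma)-\chi(\sigma)/\sigma$ is smooth (the leading $1/\sigma$ singularities cancel), while outside $\supp\chi$ the kernel of $G_r$ decays like $e^{-(\pi+\rho)\sigma}$ as $\sigma\to+\infty$ and like $e^{(\pi-\rho)\sigma}$ as $\sigma\to-\infty$. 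Combined with $\|B^{-i\sigma}\|_{\cL(X)}\le Me^{\theta_A|\sigma|}$ (noting $\theta_A<\rho<\pi-\theta_B$, but what we really need is that $\theta_B$-type exponential is compensated) and the hypothesis $\rho+\theta_B<\pi$, one finds $G_r\in L^1(\R;\cL(X))$, so $f\mapsto G_r*f$ is bounded on $L^2(\R;X)$ by Young's inequality.

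For the singular part, after substituting $\sigma=t-s$ we have
\[
(G_s*f)(t)=\frac{e^{-i\rho}}{i}\;\mathrm{p.v.}\!\int_\R\frac{\chi(\sigma)}{\sigma}\,B^{-i\sigma}f(t-\sigma)\,d\sigma.
\]
I would encode this via the $C_0$-group $V$ on $Y:=L^2(\R;X)$ defined by $(V(\sigma)g)(s):=B^{-i\sigma}g(s-\sigma)$; a direct check shows $V(\sigma)V(\tau)=V(\sigma+\tau)$ and $\|V(\sigma)\|_{\cL(Y)}\le Me^{\theta_B|\sigma|}$. In these terms,
\[
G_s*f=\frac{e^{-i\rho}}{i}\;\mathrm{p.v.}\!\int_\R\frac{\chi(\sigma)}{\sigma}\,V(\sigma)f\,d\sigma,
\]
which is a truncated/localized ergodic Hilbert transform along $V$. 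Because $\supp\chi$ is compact, the exponential growth of $V$ is harmless there. Since $X$ is UMD, so is $Y$, and the scalar Hilbert transform is bounded on $L^p(\R;Y)$ for $1<p<\infty$. I would then invoke the Berkson--Gillespie transference principle (for exponentially bounded $C_0$-groups on UMD spaces) to transfer this boundedness to the convolution with $\chi(\sigma)/\sigma\cdot V(\sigma)$ on $Y$, and hence obtain the desired bound on $L^2(\R;X)$.

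The step I expect to be the main obstacle is making the transference argument rigorous with a non-isometric, exponentially bounded group while a singular kernel is present. A cleaner self-contained route, if transference is cumbersome, is to use the Fourier transform identity $\widehat G(\xi)=2\pi(e^{i\rho}+e^\xi B)^{-1}$ (obtained by inverting the Mellin formula of Lemma~\ref{lem:mellin-ph}): the symbol is operator-valued, uniformly bounded in $\xi\in\R$, and extends holomorphically in a horizontal strip of half-width $\pi-\rho-\theta_B>0$ thanks to BIP. Such a holomorphic operator-valued symbol defines a bounded Fourier multiplier on $L^2(\R;X)$ for UMD $X$ by the standard ``holomorphic functional calculus on a strip'' argument, which again reduces to the boundedness of the Riesz projection. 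Either way, UMD enters exactly once, in the boundedness of the scalar Hilbert transform on $L^p(\R;X)$.
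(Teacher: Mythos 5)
Your splitting of the kernel and the treatment of the non-singular part (strong $L^1$ bound from $\rho<\pi-\theta_B$, then Young) are fine and match the paper's decomposition $G=G_0+G_1$ (note the typo: the group bound is $\|B^{-i\si}\|\lesssim e^{\theta_B'|\si|}$, not $\theta_A$). The genuine gap is exactly the step you flag yourself: bounding the truncated Hilbert transform twisted by the group $B^{-i\si}$. The Berkson--Gillespie/Coifman--Weiss transference principle you want to invoke is a theorem about \emph{uniformly bounded} groups; its proof averages over windows $[-L,L]$ with $L\to\infty$, where $\|V(s)\|\lesssim e^{\theta_B'|s|}$ is not under control, and compact support of $\chi$ does not repair this by itself, since the support only limits the convolution variable, not the averaging variable. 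What does repair it is a \emph{local} version of the idea, and that is precisely the paper's proof: since $g_0$ is supported in $[-1,1]$, one writes $B^{-i(\tau-\si)}=B^{-i(\tau-\ell)}B^{i(\si-\ell)}$ for $\tau\in Q_\ell=[\ell-\einhalb,\ell+\einhalb)$, so that only group elements $B^{it}$ with $|t|\le 3/2$ occur (uniformly bounded), applies the scalar convolution with $g_0$ (truncated Hilbert transform plus a smooth correction, bounded on $L^2(\R;X)$ by UMD) to the three neighbouring pieces $f_{\ell-1},f_\ell,f_{\ell+1}$, and sums the squares over $\ell\in\Z$. In your language this amounts to transferring only over a short window $|s|\le\delta$ and truncating $V(\cdot)f$ to $[-2-\delta,2+\delta]$, which costs a constant depending on $\delta$ and $e^{\theta_B}$ but nothing more; without this localization the argument does not close, so as written the appeal to an off-the-shelf transference theorem is a gap rather than a citation.

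Your fallback route is not sound. For operator-valued symbols on $L^2(\R;X)$ with $X$ UMD, uniform boundedness of $\xi\mapsto(e^{i\rho}+e^{\xi}B)^{-1}$, even with a bounded holomorphic extension to a horizontal strip, is not a sufficient multiplier condition: Weis-type theorems require $R$-boundedness of the symbol (and of its Mikhlin derivative), and holomorphy plus uniform boundedness does not upgrade to $R$-boundedness (bounded analytic semigroups that are not $R$-analytic are standard counterexamples to such an upgrade). Worse, the bounded holomorphic extension of your symbol to the strip of half-width $\pi-\rho-\om_B$ uses only sectoriality of $B$, not BIP; so if the principle ``bounded holomorphic operator-valued symbol on a strip $\Rightarrow$ multiplier on $L^2(\R;X)$ for UMD $X$'' were true, Theorem~\ref{thm:DV} would hold for arbitrary resolvent-commuting sectorial operators with $\om_A+\om_B<\pi$ on a UMD space, which is known to be false. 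UMD must enter through a singular-integral/Hilbert-transform argument applied after the group has been neutralized, as in the paper's localization, not through a purely ``bounded holomorphic symbol'' argument.
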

\begin{proof}
  We write
  $G=G\,\eins_{[-1,1]}+G\,\eins_{\R\sm[-1,1]}=:G_0+G_1$. Since, for
  $|\si|\ge1$, we have
\begin{align*}
 \nn{G(\si)}{\cL(X)}\lesssim e^{\rho|\si|} e^{-\pi|\si|} e^{\tilde{\theta}_B|\si|}, 
\end{align*}
where $\tilde{\theta}_B-\theta_B>0$ is as small as we wish, the
function $G_1$ is by choice of $\rho$ strongly in $L^1$, and thus
convolution with $G_1$ is bounded on $L^2(\R;X)$.

\smallskip\noindent We turn to $G_0$. Since $X$ is UMD, convolution
with $\si\mapsto \frac1{\pi\si}\eins_{[-1,1]}(\si)$ is bounded on
$L^2(\R;X)$. Since $\frac1{\pi\si}-\frac{e^{-\rho\si}}{\sinh(\pi\si)}$
is smooth at the origin, also convolution with
\begin{align*}
 g_0:\R\sm\{0\}\to\R, \si\mapsto  g_0(\si) = \pi e^{-i\rho}\frac{e^{-\rho\si}}{\sinh(\pi\si)}\,\eins_{[-1,1]}(\si),
\end{align*}
is bounded on $L^2(\R;X)$. Now we write, for $\tau\in\R$ and $l\in\Z$
with $-\frac12\le\tau-l<\frac12$,
\begin{align*}
  G_0*f(\tau)=
  &\int G_0(\tau-\si)f(\si)\,d\si
    = \int g_0(\tau-\si)B^{-i(\tau-\si)}f(\si)\,d\si  \\
 =& B^{-i(\tau-\ell)}\int g_0(\tau-\si) B^{i(\si-\ell)}f(\si)\,d\si \\
 =& \sum_{j=-1}^1 B^{-i(\tau-\ell)}\int g_0(\tau-\si) B^{i(\si-\ell)} f_{l+j}(\si)\,d\si,
% =& \sum_{j=-1}^1 B^{ij} B^{-i(\tau-l)} \int g_0(\tau-\si) B^{i(\si-(l+j))} f_{l+j}(\si)\,d\si,
\end{align*}
where $f_k:=f \eins_{Q_k}$ and $Q_k=[k-\einhalb,k+\einhalb)$ for each
$k\in\Z$. We thus have, for $\ell\in\Z$,
\begin{align*}
\nn{G_0*f}{L^2(Q_\ell;X)}^2\lesssim&\sum_{j=-1}^1\nn{B^{-i((\cdot)-\ell)}(g_0*(B^{i((\cdot)-\ell)}f_{\ell+j}))}{L^2(Q_\ell;X)}^2 \\
\lesssim&\sum_{j=-1}^1\nn{f_{\ell+j}}{L^2(Q_{\ell+j};X)}^2.
\end{align*}
Summing over $\ell\in\Z$ we obtain
\begin{align*}
 \nn{G_0*f}{L^2(\R;X)}\lesssim  \nn{f}{L^2(\R;X)},
\end{align*}
which finishes the proof.
\end{proof}

\begin{remark}\rm
Note that the argument to exchange non-zero functions $\phi\in H^\infty_0$ in Littlewood-Paley norms is different here from those in the previous sections.
There the arguments allow to include an operator $F(A)$ for $F\in H^\infty$ and to show at the same time that $A$ has a bounded $H^\infty$-calculus in the respective interpolation spaces. This can not be possible for the complex interpolation method, as this would imply, via similarity of spaces $\dot{D}_\al(A)$ and $X$, that an operator $A$ with BIP of angle $\theta_A<\pi$ always has a bounded $H^\infty$-calculus in $X$, which is known to be false. 
\end{remark}

\appendix

\section{Duals of generalized Triebel-Lizorkin spaces}\label{app:dual-lq}

We shall combine methods from \cite{Ku:lq} and \cite{KUll}. By
\cite{Ku:lq}, the generalized Triebel-Lizorkin spaces from \cite{KUll}
can be obtained by $\ell^q$-interpolation. We are interested in the dual
spaces of the former. More generally, we shall first describe the dual
space for a general interpolation couple $(\cX_0,\cX_1)$ where, for
$\nu=0,1$, the structured Banach spaces $\cX_\nu$ have the form
$\cX_\nu=(X_\nu,J_\nu,E_\nu)$, where $E_\nu$ satisfies
~\assref{ass:BFS} and $J_\nu:X_\nu\to E_\nu$ is an isometric
\emph{surjection}.  This special case covers what we actually need in
Section~\ref{sec:Tr-L} (in \cite{Ku:lq}, the $J_\nu$ are required to
be isometric, but not necessarily surjective). We start with some
preparations.

\begin{lemma}
  If $E$ satisfies Assumption~\assref{ass:BFS} then its dual space $E'$
  equals, in a canonical way, its K\"othe dual
  \[
    \{ g\in L^0(\Om): \; \forall f \in E\quad fg \in L^1(\Om) \}
  \]
  If $E$ is in addition reflexive then also its dual space $E'$
  satisfies Assumption~\assref{ass:BFS}.
\end{lemma}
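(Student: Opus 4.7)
The plan is to handle the two assertions separately, largely by invoking results the paper has already cited. For the first assertion, since $E$ satisfies property~\ref{BFS-item-iv}, \cite[Prop.~3.12]{LN-BFS} yields order-continuity of $E$, and then \cite[Prop.~3.15]{LN-BFS} canonically identifies the topological dual $E'$ with the Köthe dual $\{g\in L^0(\Om): fg\in L^1(\Om)\text{ for all }f\in E\}$. Both references are already invoked in Section~\ref{sec:Tr-L}, so this is essentially a one-line invocation. In particular it gives the dual norm representation $\norm{g}[E']=\sup\{\int|fg|\,d\mu:\norm{f}[E]\le 1\}$, which I will use throughout the second assertion.

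For the second assertion I would verify properties~\ref{BFS-item-i}--\ref{BFS-item-iv} one by one for $E'$ equipped with this Köthe norm. Property~\ref{BFS-item-i} is immediate from the formula for $\norm{\cdot}[E']$. Property~\ref{BFS-item-iii} follows by applying monotone convergence to $|fg_n|\nearrow|fg|$ inside the supremum over $\norm{f}[E]\le 1$, the reverse norm inequality being trivial. Property~\ref{BFS-item-iv}, i.e.\ absolute continuity of the norm of $E'$, is equivalent to order-continuity of $E'$ and is exactly the ``reflexive implies $E'$ order-continuous'' direction of \cite[Cor.~3.16]{LN-BFS}, already quoted in the paper; this is the \emph{only} place where reflexivity enters, which matches the statement.

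The one property that demands a short additional argument is the saturation~\ref{BFS-item-ii} for $E'$. My plan here: given a measurable $U\subseteq\Om$ of positive measure, saturation of $E$ produces a positive-measure subset $F_1\subseteq U$ with $\eins_{F_1}\in E$. Since $\eins_{F_1}\neq 0$ in $E$, Hahn--Banach yields a continuous functional on $E$ not vanishing at $\eins_{F_1}$, which by the first assertion is represented by some $g\in E'$ with $\int_{F_1}g\,d\mu\neq 0$. Hence $|g|>0$ on a subset of $F_1$ of positive measure, so for some $c>0$ the set $F:=\{|g|\ge c\}\cap F_1$ still has positive measure; then $c\,\eins_F\le|g|\in E'$ together with the ideal property~\ref{BFS-item-i} force $\eins_F\in E'$, which is the desired subset. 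I expect this saturation step to be the only genuine obstacle---all other pieces are either direct citations or immediate consequences of the Köthe-dual formula.
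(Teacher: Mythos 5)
Your proposal is correct, but note that the paper itself gives no argument at all here: its ``proof'' is the single citation \cite[Prop.~3.1 and Cor.~3.16]{LN-BFS}, which already packages the identification of $E'$ with the K\"othe dual together with the fact that the K\"othe dual is again a Banach function space with the Fatou property, and the equivalence of reflexivity with order-continuity of $E'$. You instead cite \cite[Prop.~3.12, Prop.~3.15]{LN-BFS} for the duality identification (exactly the chain the paper quotes in Section~\ref{sec:Tr-L}) and then verify the axioms \ref{BFS-item-i}--\ref{BFS-item-iv} for $E'$ by hand. Those verifications are sound: the ideal property and the Fatou property follow from the K\"othe-dual norm formula and monotone convergence as you say; your saturation argument is valid precisely because $E$ is order-continuous, so every Hahn--Banach functional is represented by a K\"othe-dual element $g$, and the set $\{|g|\ge c\}\cap F_1$ then does the job via the ideal property; and you correctly isolate reflexivity as entering only through order-continuity of $E'$ (\cite[Cor.~3.16]{LN-BFS}), after which \cite[Prop.~3.12]{LN-BFS} applied to $E'$ (legitimate, since you have already established \ref{BFS-item-i} and \ref{BFS-item-ii} for $E'$) gives absolute continuity of the norm. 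What your route buys is a self-contained proof that makes the logical dependencies explicit, in particular that saturation of $E'$ is the one point needing a genuine (if short) argument; what the paper's route buys is brevity, since \cite[Prop.~3.1]{LN-BFS} already asserts the structural properties of the K\"othe dual that you re-derive. The only cosmetic omission is completeness of $E'$, which you should note is automatic because $E'$ is identified isometrically with the topological dual of $E$.
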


For the proof we refer to \cite[Prop.~3.1~and~Cor.~3.16]{LN-BFS}.

\begin{lemma}
  Let $\cX=(X,J,E)$ be a structured Banach space where $E$ satisfies
  Assumption~\assref{ass:BFS}, $J:X\to E$ is an isometric surjection
  and $X$ is reflexive. Then $\cX':=(X',(J^{-1})',E')$ is again a
  structured Banach space of this form.
\end{lemma}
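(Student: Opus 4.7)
The plan is to verify the three ingredients required for $\cX'=(X',(J^{-1})',E')$ to be a structured Banach space of the stated form: first, that $E'$ satisfies Assumption~\assref{ass:BFS}; second, that the map $(J^{-1})':X'\to E'$ is a well-defined isometric surjection; and third, that $X'$ is reflexive. The third point is entirely standard: reflexivity of $X$ transfers to $X'$ by the classical duality theory of reflexive spaces.

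For the second point, I would start from the observation that, since $J:X\to E$ is an isometric surjection, it is in particular an isometric isomorphism, so $J^{-1}:E\to X$ is also an isometric isomorphism. A routine functional-analytic argument shows that the Banach-space adjoint of an isometric isomorphism is again an isometric isomorphism; applied to $J^{-1}$ this gives that $(J^{-1})':X'\to E'$ is bijective and norm-preserving. In particular, the map goes in the correct direction $X'\to E'$ needed for $\cX'$ to fit the structured Banach space framework.

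For the first point, the plan is to invoke the preceding lemma, which asserts that if $E$ satisfies Assumption~\assref{ass:BFS} and is in addition reflexive, then $E'$ satisfies Assumption~\assref{ass:BFS}. So it suffices to check that $E$ is reflexive. But $E$ is isometrically isomorphic to $X$ via $J^{-1}$, and reflexivity is preserved under isometric isomorphism; since $X$ is reflexive by hypothesis, so is $E$. Thus the hypotheses of the preceding lemma are met and $E'$ satisfies Assumption~\assref{ass:BFS}.

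I do not expect any serious obstacle: every step is a transfer of a standard Banach-space property along the isometric isomorphism $J$ (or its inverse/adjoint), combined with the already-established lemma about duals of BFS. The only point that requires a tiny bit of care is making sure the direction of the adjoint map is the right one for the definition of a structured Banach space, namely $(J^{-1})':X'\to E'$ rather than $J':E'\to X'$; the formulation in the statement already builds this in correctly.
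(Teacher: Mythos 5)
Your proof is correct. In fact, the paper states this lemma without providing a proof of its own (it appears to be regarded as routine, coming immediately after the dual-BFS lemma attributed to Lorist--Nieraeth), so there is no intended argument to compare against. Your three-step verification is exactly what is needed: the transfer of reflexivity from $X$ to $E$ along the isometric isomorphism $J$ and then to $X'$; the invocation of the preceding lemma to get that $E'$ satisfies Assumption~\assref{ass:BFS}; and the standard fact that the adjoint of an isometric isomorphism (here $J^{-1}:E\to X$) is again an isometric isomorphism, giving that $(J^{-1})':X'\to E'$ is a well-defined isometric surjection in the correct direction. All steps are sound and no gap remains.
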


\begin{theorem}\label{thm:dual-lq}
  Let $(\cX_0,\cX_1)$ be an interpolation couple of structured Banach
  spaces $\cX_\nu=(X_\nu,J_\nu,E_\nu)$, $\nu=0,1$, where the $E_\nu$
  satisfy Assumption~\assref{ass:BFS} and the $X_\nu$ are
  reflexive. Let $\theta\in(0,1)$.  For the duality pairing
  $\langle\cdot,\cdot\rangle_{X_0\cap X_1,X_0'+X_1'}$ we have
\[
 \big((\cX_0,\cX_1)_{\theta,\ell^q}\big)'=(\cX_0',\cX_1')_{\theta,\ell^{q'}}.
\]
\end{theorem}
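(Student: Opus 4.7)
The plan is to imitate the classical duality proof for the real interpolation method, with discrete $\ell^q$-sums replaced by $E_\nu$-lattice norms of $L^q_*$-valued functions. Two ingredients are essential: the equivalence between the $J$- and $K$-descriptions of $(\cX_0,\cX_1)_{\theta,\ell^q}$ established in \cite{Ku:lq}, and K\"othe duality $(E_\nu(L^q_*))'=E_\nu'(L^{q'}_*)$. The latter is available here because Assumption~\assref{ass:BFS} identifies $E_\nu'$ with the K\"othe dual of $E_\nu$ via \cite[Prop.~3.15]{LN-BFS}, and reflexivity of $X_\nu$ transfers through the isometric surjection $J_\nu$ to $E_\nu$, so that $E_\nu'$ itself satisfies~\assref{ass:BFS} by the lemma preceding the theorem.

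First I would realize $(\cX_0,\cX_1)_{\theta,\ell^q}$ as a quotient of the intersection-type space
\[
 \mathcal{F} := \bigl\{u:(0,\infty)\to X_0\cap X_1 \text{ strongly measurable} :\ \max_{\nu=0,1}\bignorm{t^{\nu-\theta}J_\nu u(t)}[E_\nu(L^q_*)]<\infty\bigr\},
\]
via the map $\pi(u):=\int_0^\infty u(t)\,dt/t$ taking values in $X_0+X_1$. The $J$-method description of the $\ell^q$-interpolation space from \cite{Ku:lq} yields $\pi(\mathcal{F})=(\cX_0,\cX_1)_{\theta,\ell^q}$ isometrically with the quotient norm. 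Consequently the dual of the interpolation space embeds isometrically into $\mathcal{F}'$ as the annihilator of $\ker\pi$.

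Next I would compute $\mathcal{F}'$ using K\"othe duality on each factor. Since $\mathcal{F}$ is the intersection of two spaces isometric (via $J_\nu$) to the weighted spaces $E_\nu(L^q_*;\,t^{\nu-\theta})$, its dual is the algebraic sum $\mathcal{G}:=\mathcal{G}_0+\mathcal{G}_1$ of $\mathcal{G}_\nu := E_\nu'(L^{q'}_*;\,t^{\theta-\nu})$, with pairing
\[
 \langle u,\, v_0+v_1\rangle_{\mathcal{F},\mathcal{F}'} = \int_0^\infty \bigl\langle u(t),\, J_0'v_0(t)+J_1'v_1(t)\bigr\rangle_{X_0\cap X_1,\,X_0'+X_1'}\,\frac{dt}{t}.
\]
Identifying the annihilator of $\ker\pi$ inside $\mathcal{G}$ then reduces to two claims: first, $v_0+v_1$ annihilates all $u\in\mathcal{F}$ with $\int u\,dt/t=0$ precisely when $J_0'v_0(t)+J_1'v_1(t)$ equals a.e.\ a constant $y\in X_0'+X_1'$; second, the infimum of $\mathcal{G}$-norms over all decompositions $(v_0,v_1)$ representing a given $y$ equals the $K_{\ell^{q'}}$-functional of $y$, which by the $K$-description in \cite{Ku:lq} is its norm in $(\cX_0',\cX_1')_{\theta,\ell^{q'}}$.

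The hard part will be this final identification. Forcing $J_0'v_0(t)+J_1'v_1(t)$ to be a.e.\ constant requires a Lebesgue-differentiation-type argument, testing against $u(t)=\varphi(t)h$ for $h\in X_0\cap X_1$ and scalar $\varphi\in C_c((0,\infty))$ with $\int\varphi\,dt/t=0$; for this family of test functions to be rich enough, one needs absolute continuity of the norm in both $E_\nu$ and $E_\nu'$ — the latter being where reflexivity of $X_\nu$ enters. Matching the infimum $\mathcal{G}$-norm with the $K_{\ell^{q'}}$-norm requires exactly the dual version of the $J/K$-equivalence from \cite{Ku:lq}, which is essentially what this appendix provides. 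The endpoint case $q=\infty$ would require a separate treatment (only the closure of $X_0'\cap X_1'$ would appear), but this case is not needed for Theorem~\ref{thm:closed-lq}.
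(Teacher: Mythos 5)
Your overall architecture --- realize the $\ell^q$-interpolation space through a $J$-type representation map $\pi$, compute the dual of the representing space by K\"othe duality, identify the annihilator of $\ker\pi$, and read off the resulting infimum as a $K$-type norm for the dual couple --- is the mirror image of the paper's argument, which pairs the discrete $K$-decomposition of $x$ with a $J$-representation of $\phi$ for one inclusion and, for the other, applies Hahn--Banach on $E_0(\ell^q)\times E_1(\ell^q)$ to a functional defined on the subspace of compatible decompositions; the relation $J_0'\psi_k^0=2^kJ_1'\psi_k^1$ there plays exactly the role of your ``annihilator $=$ constants'' step. The decisive difference is discrete versus continuous parameter, and this is where your proposal has a genuine gap at its foundation: the spaces of \cite{Ku:lq} are defined through decompositions indexed by $j\in\Z$, and a continuous-parameter quotient realization $\pi(\mathcal{F})=(\cX_0,\cX_1)_{\theta,\ell^q}$, $\pi(u)=\int_0^\infty u(t)\,\frac{dt}{t}$, is not available off the shelf (the continuous characterizations of \cite[Sec.~5]{Ku:lq} are used in this paper only for quasi-linearizable couples). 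Worse, $\pi$ need not even be defined on your $\mathcal{F}$: because the $L^q(\frac{dt}{t})$-norm sits \emph{inside} the lattice norm, finiteness of $\max_\nu\bignorm{t^{\nu-\theta}J_\nu u(t)}[E_\nu(L^q_*)]$ does not control $\int_0^\infty\nn{u(t)}{X_0+X_1}\,\frac{dt}{t}$; Minkowski's integral inequality goes the wrong way, and for $E_\nu=L^p$ with $p>q$ one can superpose bumps so that the mixed norm is finite while $t\mapsto\nn{u(t)}{X_\nu}$ fails to be locally integrable. In the discrete setting this problem disappears because $\ell^{q'}$ embeds into $\ell^\infty$ termwise, which is precisely what the paper uses to show that its series $\sum_j\phi_j$ converges in $X_0'+X_1'$.

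Two further steps you flag as ``hard'' also need more than you indicate. The a.e.-constancy of $J_0'v_0(t)+J_1'v_1(t)$ presupposes that the slices $v_\nu(\cdot,t)$ lie in $E_\nu'$ for a.e.\ $t$ and that a common null set can be chosen; membership in the mixed space $E_\nu'(L^{q'}_*)$ does not give this for free, so the Lebesgue-differentiation argument must be set up at the level of the double integral over $\Om\times(0,\infty)$. And the final identification of the infimum with the norm of $(\cX_0',\cX_1')_{\theta,\ell^{q'}}$ requires a continuous-to-discrete transfer (e.g.\ averaging over dyadic blocks, which is compatible with the lattice-mixed norms) together with the $J/K$-equivalence of \cite{Ku:lq} applied to the dual couple; the latter is legitimate because the preceding lemma shows $\cX_\nu'$ is again a structured space of the admissible type, but it is not ``what this appendix provides'' --- the appendix provides the duality theorem itself. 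All of this is repairable: if you set up your scheme discretely from the start (sequences $(u_j)_{j\in\Z}$ with $\sum_j u_j$ required to converge in $X_0+X_1$, Hahn--Banach on $E_0(\ell^q)\times E_1(\ell^q)$ instead of K\"othe duality of an intersection), it essentially collapses to the paper's proof. As written, however, the proposal is not yet a proof.
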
   

\begin{proof} We follow the lines of the proof of \cite[1.11.2
  Theorem]{Triebel}, the corresponding result for the real
  interpolation method. First let
  $\phi\in(\cX_0',\cX_1')_{\theta,\ell^{q'}}\subseteq X_0'+X_1'$ and
  $x\in X_0\cap X_1\subseteq (\cX_0,\cX_1)_{\theta,\ell^q}$. By
  \cite[Def. 2.9]{Ku:lq} we decompose $x=x_0(j)+x_1(j)$, $j\in\Z$,
  with $x_\nu(j)\in X_\nu$ and
\begin{equation}\label{eq:lq-Kdec}
 \sum_{\nu=0}^1\bignorm{ (2^{(\nu-\theta)j} J_\nu x_\nu(j))_j }_{E_\nu(\ell^q(\Z))} 
 \le 2 \nn{x}{(\cX_0,\cX_1)_{\theta,\ell^q}}<\infty,
\end{equation}
and %by \cite[Thm. 3.1]{Ku:lq}
we decompose $\phi=\sum_{j\in\Z}\phi_j$ (convergence in $X_0'+X_1'$)
where $\phi_j\in X_0'\cap X_1'$.
We then have 
\begin{align*}
 |\idual{x}{\phi}{X_0\cap X_1,X_0'+X_1'}|
 \le&\ \sum_{\nu=0}^1\Bigabs{\sum_j \idual{x_\nu(j)}{\phi_j}{X_0\cap X_1,X_0'+X_1'}} \\
 =&\ \sum_{\nu=0}^1 \Bigabs{\sum_j \idual{J_\nu x_\nu(j)}{(J_\nu^{-1})'\phi_j}{E_\nu,E_\nu'}} \\
 \le&\ \sum_{\nu=0}^1 \bignorm{( 2^{(\nu-\theta)j} J_\nu x_\nu(j))_j}_{E_\nu(\ell^q)} 
         \bignorm{( 2^{(\nu-\theta)(-j)} (J_\nu^{-1})' \phi_j))_j}_{E_\nu'(\ell^{q'})}.
\end{align*}
Taking infima we see by \cite[Thm. 3.1]{Ku:lq} that $\phi$ is in the
dual space of $(\cX_0,\cX_1)_{\theta,\ell^q}$ and with corresponding norm
$\lesssim \nn{\phi}{(\cX_0',\cX_1')_{\theta, \ell^{q'}}}$.

\medskip\noindent Conversely, let
$\phi\in\big((\cX_0,\cX_1)_{\theta,\ell^q}\big)'$. Consider the linear
subspace $Y$ of all sequences $(y_j^0,y_j^1)_{j\in\Z}$ in
$E_0(\ell^q)\times E_1(\ell^q)$ such that there exists $x\in X_0\cap X_1$
with
\begin{equation*} %\label{eq:lq-Kdec}
\forall j\in\Z \qquad  2^{j\theta}J_0^{-1}y_j^0+2^{(\theta-1)j}J_1^{-1}y_j^1=x 
\end{equation*}
Define the linear functional $\phi_Y$ on $Y$ by
$\phi_Y((y_j^0,y_j^1)_j):=\phi(x)$. Observe that $\phi_Y$ is
well-defined and linear. In addition, we have
\[
  \begin{aligned}
    |\phi_Y((y_j^0,y_j^1)_j)|
    = & \; |\phi(x)|\le\nn{\phi}{((\cX_0,\cX_1)_{\theta,\ell^q})'} \nn{x}{(\cX_0,\cX_1)_{\theta,\ell^q}} \\
 \le & \; \nn{\phi}{((\cX_0,\cX_1)_{\theta,\ell^q})'} \left(\sum_{\nu=0}^1\bignorm{(y_j^\nu)_j}_{E_\nu(\ell^q)}\right).
  \end{aligned}
\]
By Hahn-Banach, $\phi_Y$ has a norm-preserving extension to a linear
functional $\psi$ on $E_0(\ell^q)\times E_1(\ell^q)$ which has the form
\[
 \psi((y_j^0,y_j^1)_j)=\sum_{j\in\Z} \Big(\idual{y_j^0}{\psi_j^0}{E_0,E_0'}+\idual{y_j^1}{\psi_j^1}{E_1,E_1'}\Big),
\]
where $(\psi_j^0,\psi_j^1)_{j\in\Z}\in E'_0(\ell^{q'})\times E'_1(\ell^{q'})$ thus satisfy
\begin{align}\label{eq:psi-normest}
 \sum_{\nu=0}^1 \bignorm { (\psi_j^\nu)_j}_{E_\nu'(\ell^{q'})} \lesssim \nn{\phi}{((\cX_0,\cX_1)_{\theta,\ell^q})'}.
\end{align}
We claim that 
\begin{align}\label{eq:psik=}
 J_0'\psi_k^0=2^k J_1'\psi_k^1,\quad \mbox{$k\in\Z$}.
\end{align}
In order to see this, fix $k\in\Z$, take $x_k^0\in X_0\cap X_1$ and
let $x_j^0:=0$ for all $j\in\Z\sm\{k\}$. For $j\in\Z$, define
$x_j^1\in X_0\cap X_1$ by $2^{j\theta}x_j^0+2^{(\theta-1)j}x_j^1=0$
and put $y_j^0:=J_0x_j^0$, $y_j^1:=J_1x_j^1$. Observe that
$x_k^1=-2^kx_k^0$. Then $(y_j^0,y_j^1)_j\in Y$ and
\begin{align*}
 0=&\ \phi(0)=\psi((y_j^0,y_j^1)_j)=\sum_{\nu=0}^1\idual{y_k^\nu}{\psi_k^\nu}{E_\nu,E_\nu'}
 =\sum_{\nu=0}^1\idual{x_k^\nu}{J_\nu'\psi_k^\nu}{X_\nu,X_\nu'} \\
 =&\ \idual{x_k^0}{J_0'\psi_k^0}{X_0,X_0'}-\idual{x_k^0}{2^kJ_1'\psi_k^1}{X_1,X_1'}
 =\idual{x_k^0}{J_0'\psi_k^0-2^kJ_1'\psi_k^1}{X_0\cap X_1,X_0'+X_1'}
\end{align*}
As $x_k^0\in X_0\cap X_1$ was arbitrary, this proves \eqref{eq:psik=}
in $X_0'+X_1'$ which by $J_\nu' \psi_k^\nu\in X_\nu'$ in turn implies that
$J_\nu' \psi_k^\nu\in X_0'\cap X_1'$. We thus can define
\begin{align}\label{eq:phik}
 \phi_k:=2^{-\theta k}J_0'\psi_k^0=2^{(1-\theta)k}J_1'\psi_k^1\in X_0'\cap X_1',\quad k\in\Z.
\end{align}
Again, we take an arbitrary $x\in X_0\cap X_1$ and decompose as
$x=x_j^0+x_j^1$ with $x_j^\nu\in X_\nu$
and \eqref{eq:lq-Kdec}. Then
\begin{align*}
 \dual{x}{\phi}=&\ \phi_Y((2^{-j\theta}J_0x_j^0,2^{(1-\theta)j}J_1x_j^1)_j)
 =\sum_{j\in\Z}\left(\dual{2^{-j\theta}J_0x_j^0}{\psi_j^0}+\dual{2^{(1-\theta)j}J_1x_j^1}{\psi_j^1}\right) \\
 =&\sum_{j\in\Z} \dual{x_j^0+x_j^1}{\phi_j} =\sum_{j\in\Z}\dual{x}{\phi_j},
\end{align*}
and we conclude that $\phi=\sum_{j\in\Z}\phi_j$ weak$^*$ in
$X_0'+X_1'=(X_0\cap X_1)'$. But by \eqref{eq:phik} and
\eqref{eq:psi-normest} we have
\[
 \sum_{\nu=0}^1 \bignorm { (2^{(\theta-\nu) j} \phi_j)_j}_{E_\nu'(\ell^{q'})}
 =\sum_{\nu=0}^1 \bignorm { (\psi_j^\nu)_j}_{E_\nu'(\ell^{q'})} \lesssim\nn{\phi}{((\cX_0,\cX_1)_{\theta,\ell^q})'},
\]
which first implies $\phi=\sum_{j\in\Z}\phi_j$ in $X_0'+X_1'$ and then
finally shows $\phi\in (\cX_0',\cX_1')_{\theta,\ell^{q'}}$ with norm
$\lesssim\nn{\phi}{((\cX_0,\cX_1)_{\theta,\ell^q})'}$.
\end{proof}

\begin{remark}\rm
We remark that the assertion of Theorem~\ref{thm:dual-lq} can also be proved as an application of \cite[Prop~3.14]{LL-interpol}, see \cite[Example~3.15~(iv)]{LL-interpol}.
\end{remark}

Theorem~\ref{thm:dual-lq} allows us to give a description of the dual
space of generalized Triebel-Lizorkin spaces for an operator $A$ in
terms of generalized Triebel-Lizorkin spaces for the dual operator
$A'$.

\begin{prop}\label{prop:dual-TL}
Assume that $X$ satisfies Assumption~\assref{ass:BFS} and is reflexive, that $A$ is an $\cR_q$-sectorial operator in $X$ of angle $\om_A^{\ell^q}\in[0,\pi)$, where $q\in[1,\infty)$, and that $0\in\rho(A)$ holds. For each $\theta\in(0,1)$ we have
\begin{align}\label{eq:dual-TL}
 \big(X_{\theta,\ell^q}(A)\big)'=(A')_{-1}\big( (X')_{1-\theta,\ell^{q'}}(A') \big),
\end{align}
where the duality pairing is given by
$\dual{x}{x'}=\idual{x}{x'}{X,X'}$ for $x\in D(A)$ and $x'\in D(A')$.
Here, the operator $(A')_{-1}$ in $(X')_{-1}(A')$ is the unique
continuous extension $(A')_{-1}:X'\to (X')_{-1}(A')$ of
$A': D(A') \to X'$. Equivalent norms on $\big(X_{\theta,\ell^q}(A)\big)'$
are given by
\begin{align}\label{eq:dual-TL-norms}
 \Bignorm{\Big(\int_0^\infty \big| t^{\theta-1}\phi(tA')\big((A')_{-1}\big)^{-1}x'\big|^{q'}\,\frac{dt}t\Big)^{1/q'}}_{X'},
\end{align}
where $\phi\neq0$ is a function in $H^\infty_0(\sect{\om})$ for some
$\om\in(\om_A^{\ell^q},\pi)$ such that
$z\mapsto z^{\theta-1}\phi(z)\in H^\infty_0(\sect{\om})$.
\end{prop}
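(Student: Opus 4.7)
The plan is to realize $X_{\theta,\ell^q}(A)$ as the $\ell^q$-interpolation space of a couple of structured Banach spaces and invoke Theorem~\ref{thm:dual-lq}. Since $0\in\rho(A)$, the map $A$ is an isometric surjection $(D(A),\nn{A\cdot}{X})\to X$, so I set $\cX_0:=(X,\mathrm{id}_X,X)$ and $\cX_1:=(D(A),A,X)$; both underlying Banach spaces are reflexive and $E_0=E_1=X$ satisfies \assref{ass:BFS}. By the construction of the generalized Triebel-Lizorkin spaces in \cite{Ku:lq, KUll}, one has $X_{\theta,\ell^q}(A)=(\cX_0,\cX_1)_{\theta,\ell^q}$ with equivalent norms, so Theorem~\ref{thm:dual-lq} yields
\[
  (X_{\theta,\ell^q}(A))'=(\cX_0',\cX_1')_{\theta,\ell^{q'}}=(X',D(A)')_{\theta,\ell^{q'}}.
\]

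The second step is to rewrite this interpolation couple in a more convenient form. Standard extrapolation theory identifies $D(A)'$ with $(X')_{-1}(A')$ via the dual of $A^{-1}$ composed with the extrapolation iso $(A')_{-1}:X'\to(X')_{-1}(A')$, so that the inclusion $X'\hookrightarrow D(A)'$ (by restriction of functionals) corresponds to the canonical dense embedding $X'\hookrightarrow(X')_{-1}(A')$. Under this identification, $(A')_{-1}$ is an isometric isomorphism on $X'$ that restricts to $A':D(A')\to X'$ on the dense subspace $D(A')$, hence defines an isomorphism of interpolation couples $(D(A'),X')\to(X',(X')_{-1}(A'))$. Combined with the symmetry $(U_0,U_1)_{\theta,\ell^{q'}}=(U_1,U_0)_{1-\theta,\ell^{q'}}$ of the $\ell^{q'}$-interpolation functor (immediate from the reindexing $j\mapsto-j$ in the K-type decomposition~\eqref{eq:lq-Kdec}), this gives
\[
  (X',(X')_{-1}(A'))_{\theta,\ell^{q'}}=(A')_{-1}\big((X',D(A'))_{1-\theta,\ell^{q'}}\big).
\]
Applying the identification of the first paragraph, now to $A'$ acting on $X'$, recognises $(X',D(A'))_{1-\theta,\ell^{q'}}$ as $(X')_{1-\theta,\ell^{q'}}(A')$ and yields~\eqref{eq:dual-TL}. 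The norm formula~\eqref{eq:dual-TL-norms} is then the isometric pullback under $(A')_{-1}$ of the generalized Triebel-Lizorkin norm of $((A')_{-1})^{-1}x'$, with the exponent $\theta-1=-(1-\theta)$ matching the definition in Section~\ref{sec:Tr-L}.

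The hard part will be justifying the re-use of the first-paragraph identification for $A'$ on $X'$: this requires that $X'$ itself satisfies~\assref{ass:BFS} (which follows from reflexivity of $X$ via the lemmas preceding Theorem~\ref{thm:dual-lq}), that $A'$ is $\cR_{q'}$-sectorial in $X'$ of the same angle as $A$ (by trace duality of $\ell^q$-boundedness on order-continuous Banach function spaces, with the roles of $q$ and $q'$ exchanged), and that $0\in\rho(A')$ (immediate). Granted these, coherence of the pairing with $\dual{x}{x'}=\idual{x}{x'}{X,X'}$ for $x\in D(A)$ and $x'\in D(A')$ follows from the definition of the adjoint $A'$ together with density of such pairs in the respective interpolation spaces.
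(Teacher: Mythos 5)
Your argument is correct, and its second half takes a genuinely different route from the paper. The first half coincides: you realize $X_{\theta,\ell^q}(A)=(\cX_0,\cX_1)_{\theta,\ell^q}$ with $\cX_0=(X,I_X,X)$, $\cX_1=(D(A),A,X)$, apply Theorem~\ref{thm:dual-lq}, use the reindexing symmetry, and identify $D(A)'\simeq (X')_{-1}(A')$ — exactly as in the paper. Where you diverge is in identifying the resulting interpolation space: the paper stays with the extrapolated couple $\big(((X')_{-1}(A'),((A')_{-1})^{-1},X'),(X',I_{X'},X')\big)$, verifies by hand that it is $\ell^{q'}$-quasi-linearizable via $V_\nu(t)=(t(A')_{-1})^{1-\nu}(1+t(A')_{-1})^{-1}$ (reducing the four required $\ell^{q'}$-boundedness conditions to $\cR_{q'}$-sectoriality of $A'$), and then reads off the discrete and continuous norms from \cite[Prop.~4.2, Sec.~5]{Ku:lq}; you instead transport the dual couple along the isomorphism $(A')_{-1}$ onto the standard couple $\big((X',I_{X'},X'),(D(A'),A',X')\big)$ and invoke the realization of \cite{Ku:lq} a second time, now for $A'$ in $X'$. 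Your route is shorter and reuses \cite{Ku:lq} as a black box, but it needs two points to be checked with some care, which you only gesture at: first, the $\ell^{q'}$-method is a functor on \emph{structured} couples, so the isomorphism must intertwine the structure maps and not just the Banach spaces — this does hold, since $((A')_{-1})^{-1}$ carries the structure $((A')_{-1})^{-1}$ on $(X')_{-1}(A')$ to $I_{X'}$ on $X'$ and the structure $I_{X'}$ on $X'$ to $A'$ on $D(A')$, which are precisely the structures used to define $(X')_{1-\theta,\ell^{q'}}(A')$; second, the hypotheses of \cite{Ku:lq} for the pair $(X',A',q')$ (Assumption~\assref{ass:BFS} for $X'$, $\cR_{q'}$-sectoriality of $A'$ of the same angle, $0\in\rho(A')$), which you correctly list and which the paper needs in equivalent form anyway. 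Finally, your last sentence about the norm \eqref{eq:dual-TL-norms} implicitly uses the equivalence of the continuous-parameter norms for different admissible $\phi$ (as in \cite[Sec.~5]{Ku:lq} or \cite[Prop.~4.5]{KUll}); the paper cites this explicitly. What the paper's more pedestrian computation buys is the explicit discrete norm on the dual space along the way; what yours buys is economy and a cleaner conceptual picture, namely that dualization simply exchanges $(\theta,q,A,X)$ with $(1-\theta,q',A',X')$ up to the shift $(A')_{-1}$.
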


\begin{proof}
Let $\theta\in(0,1)$.
By \cite{Ku:lq} we have
\[
 X_{\theta,\ell^q}(A)=(\cX_0,\cX_1)_{\theta,\ell^q},
\]   
i.e. the generalized Triebel-Lizorkin space $X_{\theta,\ell^q}(A)$ can be
obtained as an $\ell^q$-interpolation space between the structured Banach
spaces $\cX_0:=(X,I_X,X)$ and $\cX_1:=(X_1(A),A,X)$. Observe that
$I:X\to X$ and $A:X_1(A)\to X$ are surjective (the latter by
$0\in\rho(A)$) and that we have
\[
 \cX_0'=(X',I_{X'},X')=:\cY_1 \quad\mbox{and}\quad \cX_1'=(X_1(A)',(A^{-1})',X'):=\cY_0.
\]
Application of Theorem~\ref{thm:dual-lq} and
\cite[Prop. 2.13~(a)]{Ku:lq} thus yields
\[
 \big(X_{\theta,\ell^q}(A)\big)'=(\cX_0',\cX_1')_{\theta,\ell^{q'}}=(\cX_1',\cX_0')_{1-\theta,\ell^{q'}}=(\cY_0,\cY_1)_{1-\theta,\ell^{q'}}.
\]
It is easy to see that, in a canonical way via the dual pairing
$\idual{\cdot}{\cdot}{X,X'}$, we have $X_1(A) ' = (D(A))' \simeq (X')_{-1}(A')$, the
first extrapolation space of $X'$ with respect to $A'$.
In this sense we then have
\[
 \cY_0=\cX_1'=\Big((X')_{-1}(A'), \big((A')_{-1}\big)^{-1}, X'\Big).
\]
Now, we can see that the couple $(\cY_0,\cY_1)$ is
$\ell^{q'}$-quasi-linearizable (see \cite[Def.~4.1]{Ku:lq} for this
notion). Indeed, this is similar to \cite[Example~4.3]{Ku:lq}: We let
\[
 V_\nu(t):=(t(A')_{-1})^{1-\nu}(1+t(A')_{-1})^{-1},\quad t>0,\,\nu=0,1. 
\]
Then $V_0(t)+V_1(t)=I$ on $(X')_{-1}(A')$ and we have to check that $\{t^{\nu-\rho}V_\nu(t):t>0\}$ are $\ell^{q'}$-bounded $\cY_\rho\to \cY_\nu$ for $\nu,\rho=0,1$ in the sense of \cite[Def.~2.4]{Ku:lq}. In our situation this means to show that the four sets
\begin{align}\label{eq:4sets}
 \{t^{\nu-\rho}J_\nu V_\nu(t) (J_\rho)^{-1}:t>0\},\quad \nu,\rho=0,1,
\end{align}
are $\ell^{q'}$-bounded $X'\to X'$ in the sense of Section~\ref{sec:Tr-L}, where for $\nu=0,1$, $J_\nu$ denotes the embedding onto $X'$ in $\cY_\nu$, i.e. $J_0=\big((A')_{-1}\big)^{-1}$ and $J_1=I_{X'}$, or $J_\nu=\big((A')_{-1}\big)^{\nu-1}$. We thus have, for $\nu,\rho=0,1$ and $t>0$,
\begin{align*}
 t^{\nu-\rho}J_\nu V_\nu(t) (J_\rho)^{-1}=&\ 
 t^{\nu-\rho} \big((A')_{-1}\big)^{\nu-1} (t(A')_{-1})^{1-\nu}(1+t(A')_{-1})^{-1} \big((A')_{-1}\big)^{\rho-1} \\
 =&\ (tA')^{1-\rho}(1+tA')^{-1}.
\end{align*}
Since $\cR_q$-sectoriality of $A$ in $X$ implies $\cR_{q'}$-sectoriality of $A'$ in $X'$ of angle $\om_A^{\ell^q}$, the four sets in \eqref{eq:4sets} are $\ell^{q'}$-bounded $X'\to X'$. 

Now we apply \cite[Prop.~4.2]{Ku:lq} and see that $(X_{\theta,\ell^q}(A))'=(\cY_0,\cY_1)_{1-\theta,\ell^{q'}}$ consists of all
$x'\in (X')_{-1}(A')$, for which the following norm is finite, and this expression gives an equivalent norm on $(X_{\theta,\ell^q}(A))'$:
\begin{align*}
 &\ \sum_{\nu=0,1} \Bignorm{\Big(\sum_{j\in\Z} |2^{(\nu-(1-\theta))j} J_\nu V_\nu(2^j)x' |^q\Big)^{1/q} }_{X'} \\
 =&\ \sum_{\nu=0,1} \Bignorm{\Big(\sum_{j\in\Z} \big|2^{(\nu-(1-\theta))j} \big((A')_{-1}\big)^{\nu-1} (2^j(A')_{-1})^{1-\nu} \big(1+2^j(A')_{-1}\big)^{-1} x' \big|^q\Big)^{1/q} }_{X'} \\
=&\ \sum_{\nu=0,1} \Bignorm{\Big(\sum_{j\in\Z} \big| 2^{\theta j} \big(1+2^j(A')_{-1}\big)^{-1} x' \big|^q\Big)^{1/q} }_{X'} \\
=&\ 2 \Bignorm{\Big(\sum_{j\in\Z} \big| 2^{-(1-\theta) j} (2^jA') \big(1+2^j A' \big)^{-1} \big((A')_{-1}\big)^{-1} x' \big|^q\Big)^{1/q} }_{X'}.
\end{align*}
This also proves \eqref{eq:dual-TL}. Finally, by \cite[Sec.~5]{Ku:lq} we conclude that the above norm is equivalent to the expressions in \eqref{eq:dual-TL-norms}.
\end{proof}

\section{The complex method and operators with BIP}\label{app:complex}

\subsection*{Complex interpolation revisited} We recall the definition
of the complex interpolation method, originally introduced by
Calder\'on in \cite{calderon}. Here, we follow the presentation in
\cite[Section~1.9]{Triebel}. We denote
$\mathbb{S}:=\{z\in \C:0<\Rep z< 1\}$.

Let $(X_0,X_1)$ be an interpolation couple and $\ga\in\R$. Then
$F(X_0,X_1,\ga)$ is the vector space of all functions
$f:\ov{\mathbb{S}}\to X_0+X_1$ that are continuous on
$\ov{\mathbb{S}}$, holomorphic in $\mathbb{S}$, satisfy
\[
  \sup \bigl\{ e^{-|\ga| \, |\Imp z|}  \,  \|f(z)\|_{X_0+X_1} \;: \quad z\in\ov{\mathbb{S}} \bigr\} \; < \;\infty
\]
and are such that $\R\to X_j$,
$\si\mapsto f(j+i\si)$, are continuous with
\[
  \sup \bigl\{ e^{-\ga|\si|} \nn{f(j+i\si)}{X_j} : \si\in\R \}    \bigr\} \; < \; \infty, \qquad j=0,1.
\]
By \cite[1.9.1 Theorem (a)]{Triebel}, $F(X_0,X_1,\ga)$ is a Banach space for the norm 
\begin{align*}
 \nn{f}{F(X_0,X_1,\ga)}:=\max_{j=0,1}\Big(\sup_{\si\in\R}e^{-\ga|\si|}\nn{f(j+i\si)}{X_j}\Big).
\end{align*}
For $\theta\in(0,1)$, the complex interpolation space $[X_0,X_1]_\theta$ is defined by 
\begin{align*}
 [X_0,X_1]_\theta:=\{ f(\theta) \in X_0+X_1: f\in F(X_0,X_1,\ga)\}
\end{align*}
and is equipped with the quotient norm
\begin{align*}
\nn{x}{[X_0,X_1]_\theta}=\inf\{\nn{f}{F(X_0,X_1,\ga)}:f\in F(X_0,X_1,\ga), f(\theta)=x\}.
\end{align*} 
The definition of $[X_0,X_1]_\theta$ does not depend on $\ga\in\R$,
and the norms for different $\ga$ are equivalent (see \cite[1.9.2
Theorem]{Triebel}).

\medskip\noindent Since we work with Littlewood-Paley expressions in this paper we need
a corresponding description for the complex
method.
If $\psi:(0,\infty)\to X_0\cap X_1$ is continuous %with values in $X_0+X_1$ 
such that we have 
\begin{align}\label{eq:C-j}
 \nn{t^j\psi(t)}{X_j}\le C_j\min\{t^\eps,t^{-\eps}\},\quad t>0,\quad\mbox{for $j=0,1$ and some $\eps>0$}. 
\end{align}
Then 
\begin{align}\label{eq:C-01}
 \nn{\psi(t)}{X_0+X_1}\le C_\psi\min\{ t^\eps, t^{-1-\eps}\},\quad t>0,\quad\mbox{for the same $\eps>0$,}
\end{align}
and we can define the Mellin transform $\mathscr{M}\psi$ via
\begin{align*}
 \mathscr{M}\psi(s):=\int_0^\infty t^{s}\psi(t)\,\frac{dt}t,\quad s\in\ov{S},
\end{align*}
where the integral is absolutely convergent in $X_0+X_1$. Observe that
$\mathscr{M}\psi$ is actually analytic on the strip
$\mathbb{S}+B(0,\eps)$ with values in $X_0+X_1$ and that we have
\begin{align*}
 \nn{\mathscr{M}\psi(s)}{X_0+X_1}\le C_\psi \int_0^\infty \min\{t^{\theta+\eps}, t^{\theta-1-\eps}\}\,\frac{dt}t\le C_\psi\int_0^\infty\min\{t^\eps,t^{-\eps}\}\,\frac{dt}t,\quad s=\theta+i\si\in\ov{\mathbb{S}}.
\end{align*}
Moreover, the functions $\R\to X_j$,
$\si\mapsto \mathscr{M}\ph(j+i\si)$, are continuous for $j=0,1$.  For
the following we use, for continuous functions $f:(0,\infty)\to X_j$
with $\nn{f(t)}{X_j}\le C\min\{t^\eps,t^{-\eps}\}$, and $\ga\in\R$ the
notation
\begin{align*}
 \nn{f}{\mathscr{F}_\ga(X_j)}:=\sup_{\si\in\R}e^{-\ga|\si|}\, \Bignorm{\int_0^\infty t^{i\si} f(t)\,\frac{dt}t}_{X_j}.
\end{align*}
where the integral is understood in the Bochner sense in $X_j$. For
$\ga=0$ we simply write $\mathscr{F}(X):=\mathscr{F}_0(X)$. Finally,
we denote by $\mathscr{C}_0(X_0,X_1)$ the set of all continuous
functions $\psi:(0,\infty)\to X_0\cap X_1$ satisfying \eqref{eq:C-j}
and
\[
 \max_{j=0,1}\nn{t\mapsto t^j\psi(t)}{\mathscr{F}(X_j)}<\infty.
\]
Then we have the following.

\begin{prop}\label{prop:cplx-interpol-norm}
Let $\theta\in(0,1)$. Then $[X_0,X_1]_\theta$ is the set of all $x\in X_0+X_1$ such that there exists a continuous function $\ph:(0,\infty)\to X_0\cap X_1$ such that $\psi: t\mapsto t^{-\theta}\ph(t)\in \mathscr{C}_0(X_0,X_1)$ and $\int_0^\infty \ph(t)\,\frac{dt}t=x$ where the integral is a Bochner integral in $X_0+X_1$. Taking the infimum of
\begin{align*}
 \max_{j=0,1} \nn{t\mapsto t^{j-\theta}\ph(t)}{\mathscr{F}(X_j)}<\infty.          
\end{align*}
over such functions $\ph$ yields an equivalent norm on $[X_0,X_1]_\theta$. 

The same assertion holds if we restrict to the class of such $\ph$
that have a bounded holomorphic extension (with values in
$X_0\cap X_1$) to each sector $\sect{\rho}$, $\rho\in[0,\pi)$.
\end{prop}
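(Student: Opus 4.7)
The plan hinges on a Mellin transform correspondence between $F(X_0,X_1,\gamma)$ and representations $x=\int_0^\infty\ph(t)\,\frac{dt}{t}$. Under the substitution $\psi(t):=t^{-\theta}\ph(t)$, the Mellin transform $f:=\mathscr{M}\psi$ satisfies $f(\theta)=x$ and $f(j+i\si)=\mathscr{M}(t^{j-\theta}\ph)(i\si)$, so that one has the identity $\|f\|_{F(X_0,X_1,0)}=\max_{j=0,1}\nn{t\mapsto t^{j-\theta}\ph(t)}{\mathscr{F}(X_j)}$.

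For the sufficiency, given $\ph$ with $\psi=t^{-\theta}\ph\in\mathscr{C}_0(X_0,X_1)$, set $f(z):=\int_0^\infty t^{z-\theta}\ph(t)\,\frac{dt}{t}$. By \eqref{eq:C-01} the integral converges absolutely in $X_0+X_1$ on $\ov{\mathbb{S}}$, so $f$ is continuous there, holomorphic on $\mathbb{S}$ by Morera, and $f(\theta)=x$. On the boundary line $\Rep z=j$ the value $f(j+i\si)=\mathscr{M}(t^{j-\theta}\ph)(i\si)$ lies in $X_j$ with $\si$-uniform norm bound $\nn{t\mapsto t^{j-\theta}\ph}{\mathscr{F}(X_j)}$, and continuity into $X_j$ follows from dominated convergence via \eqref{eq:C-j}. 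Hence $f\in F(X_0,X_1,0)$ with $\|f\|_{F(X_0,X_1,0)}=\max_j\nn{t\mapsto t^{j-\theta}\ph}{\mathscr{F}(X_j)}$, which yields $x\in[X_0,X_1]_\theta$ together with the desired norm estimate.

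For the necessity, take $f\in F(X_0,X_1,\gamma)$ with $f(\theta)=x$ and regularize by the entire Gaussian multiplier $h(z):=e^{(z-\theta)^2}$, so that $f_1:=hf\in F(X_0,X_1,0)$ remains holomorphic on $\mathbb{S}$, satisfies $f_1(\theta)=x$, and enjoys Gaussian decay $\nn{f_1(j+i\si)}{X_j}\lesssim e^{-\si^2+\gamma|\si|}\|f\|_{F(X_0,X_1,\gamma)}$ on horizontal lines. Define
\begin{align*}
 \psi(t):=\frac{1}{2\pi i}\int_{\theta-i\infty}^{\theta+i\infty}t^{-s}f_1(s)\,ds,\qquad\ph(t):=t^\theta\psi(t).
\end{align*}
Via the substitution $t=e^u$ this is the Fourier inversion of $\si\mapsto f_1(\theta+i\si)$, so Mellin-Fourier inversion yields $\mathscr{M}\psi=f_1$ on $\ov{\mathbb{S}}$. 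Consequently $\int_0^\infty\ph(t)\,\frac{dt}{t}=\mathscr{M}\psi(\theta)=x$ and $\mathscr{M}(t^{j-\theta}\ph)(i\si)=f_1(j+i\si)$, giving $\nn{t\mapsto t^{j-\theta}\ph}{\mathscr{F}(X_j)}\lesssim\|f\|_{F(X_0,X_1,\gamma)}$. Shifting the inversion contour to $\Rep s=j$, justified by Gaussian decay and strip holomorphy, represents $\psi(t)$ as an $X_j$-valued integral, so $\psi:(0,\infty)\to X_0\cap X_1$ is continuous; the pointwise bound \eqref{eq:C-j} is obtained by combining this with a further shift to $\Rep s=j\pm\delta\in(0,1)$ that produces the polynomial factor $t^{\mp\delta}$.

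The main obstacle I expect is precisely the pointwise decay \eqref{eq:C-j}: the contour shift to $\Rep s=j$ delivers $X_j$-valuedness but only uniform boundedness in $t$, whereas an off-boundary shift produces genuine $t$-decay only in the weaker $X_0+X_1$-norm. Reconciling the two to extract pointwise $X_j$-norm decay is the technical core, requiring a careful use of strip holomorphy of $f_1$ together with the Gaussian smoothness of $\si\mapsto f_1(j+i\si)$. Once this is in hand, the strengthened claim about bounded holomorphic extensions to sectors $\sect{\rho}$, $\rho<\pi$, is immediate: for $z\in\sect{\rho}$ the bound $|z^{-\theta-i\si}|\le|z|^{-\theta}e^{\rho|\si|}$ is absorbed by the Gaussian decay of $f_1(\theta+i\si)$, so the integral defining $\ph$ converges absolutely and holomorphically in $z\in\sect{\rho}$.
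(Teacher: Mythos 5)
Your overall framework -- identifying admissible representations $x=\int_0^\infty\ph(t)\,\frac{dt}t$ with functions $f=\mathscr{M}(t^{-\theta}\ph)\in F(X_0,X_1,\ga)$ via the Mellin transform, so that $f(\theta)=x$ and $f(j+i\si)=\mathscr{M}(t^{j-\theta}\ph)(i\si)$ -- is the right correspondence, and your sufficiency direction is sound: for $\psi=t^{-\theta}\ph\in\mathscr{C}_0(X_0,X_1)$ the estimates \eqref{eq:C-j} and \eqref{eq:C-01} do give $f\in F(X_0,X_1,0)$ with $\norm{f}[F(X_0,X_1,0)]=\max_{j}\nn{t\mapsto t^{j-\theta}\ph}{\mathscr{F}(X_j)}$, hence one inequality between the norms.

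The necessity direction, however, has a genuine gap, and it is exactly the one you flag yourself: your construction does not yield \eqref{eq:C-j}, and without \eqref{eq:C-j} the function $\psi$ you build is not in $\mathscr{C}_0(X_0,X_1)$, so it is not an admissible representation at all (this also undermines the continuity into $X_0\cap X_1$, the identity $\mathscr{M}\psi=f_1$ on $\ov{\mathbb{S}}$ as you derive it, and the sector-extension claim, all of which you make contingent on \eqref{eq:C-j}). The obstruction is not merely technical. After the contour shift to $\Rep s=j$ you have $t^{j}\psi(t)=\frac1{2\pi}\int_\R t^{-i\si}g_j(\si)\,d\si$ with $g_j(\si)=f_1(j+i\si)\in X_j$; this is the Fourier transform, evaluated at $\ln t$, of a function that is only \emph{continuous} in $X_j$ (with Gaussian decay). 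Vector-valued Riemann--Lebesgue then gives $\nn{t^{j}\psi(t)}{X_j}\to0$ as $t\to0,\infty$, but no rate; the polynomial rate $\min\{t^{\eps},t^{-\eps}\}$ required in \eqref{eq:C-j} would need an $X_j$-valued analytic (or at least Sobolev/Hölder) extension of $g_j$ to a horizontal strip around $\R$, and this is not available: $f_1$ is holomorphic only in the open strip $\mathbb{S}$ and only as an $X_0+X_1$-valued function, so the Gaussian factor $e^{(z-\theta)^2}$ cannot create smoothness of the boundary values in the endpoint norms. Your proposed remedy -- shifting to $\Rep s=j\pm\del$ -- indeed only produces decay in the $X_0+X_1$-norm, as you note. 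This is precisely the difficulty the paper's proof avoids: it invokes \cite[1.9.1 Theorem (b)]{Triebel}, by which the span of the functions $z\mapsto e^{\del z^2}e^{\la z}x$ with $x\in X_0\cap X_1$ is dense in $F(X_0,X_1,\ga)$; for these the inverse Mellin transforms are explicit Gaussians in $\ln t$ times fixed vectors of $X_0\cap X_1$, hence bounded and analytic on every sector $\sect{\rho}$ with arbitrary polynomial decay at $0$ and $\infty$, and the general case follows by approximation in the $F$-norm. To complete your argument you would either need such a density/approximation step or a different regularization of $f$ whose inverse Mellin transform has quantified decay in $X_0$ and $X_1$ separately; the Gaussian multiplier alone cannot do this.
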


\begin{proof}
  We use \cite[1.9.1~Theorem (b)]{Triebel} which states that the
  linear span of functions $z\mapsto e^{\del z^2}e^{\la z} x$ with
  $\del>0$, $\la\in\R$ and $x\in X_0\cap X_1$ is dense in
  $F(X_0,X_1,\ga)$ for each $\ga$. The inverse Mellin transforms 
  of these functions can be calculated explicitly, see formula~\eqref{eq:mellin-inv} 
  below. They are bounded and analytic on each sector $\sect{\rho}$ 
  and have arbitrary polynomial decay at $0$ and $\infty$. Indeed, we have
  \[
  \frac1{2\pi} \int_\R t^{-i\si} e^{-\del \si^2+\la i\si}\,d\si
  =\frac1{2\pi}\int_\R e^{i\si(\la-\ln t)} e^{-\del\si^2}\,d\si=\mathscr{F}^{-1}(e^{-\del(\cdot)^2})(\la-\ln t),
  \]
  and the inverse Fourier transform of $\si\to e^{-\del\si^2}$ decays
  faster than any exponential at $\pm\infty$.
\end{proof}

\subsection*{Mellin transforms} The Mellin transform is related to the
Fourier transform $\mathscr{F}$ or the (two-sided) Laplace transform
$\mathscr{L}$ via the substitution $t=e^x$:
\begin{align*}
  \mathscr{M}\ph(i\si)=\int_\R e^{i\si x}\ph(e^x)\,dx=\mathscr{F}(\ph\circ\exp)(-\si)=\mathscr{L}(\ph\circ\exp)(-i\si).
\end{align*}
For $\ph$ such that $\mathscr{M}\ph$ is analytic on the strip
$\{ a<\Rep s<b \}$ and nice enough, this yields the Mellin inversion formula
\begin{align}\label{eq:mellin-inv}
 \ph(t)=\frac1{2\pi i} \int_{c-i\infty}^{c+i\infty} t^{-s}\mathscr{M}\ph(s)\,ds,
\end{align}
where $c\in(a,b)$. Moreover, it is clear that the Mellin transform of
a product $\psi(t)\ph(t)$ is given by convolution of $\mathscr{M}\psi$
and $\mathscr{M}\ph$ along parallels to the imaginary axis. 
\begin{align}\label{eq:mellin-conv}
 \mathscr{M}(\ph\psi)(s)=\frac1{2\pi i}\int_{c-i\infty}^{c+i\infty}\mathscr{M}\ph(z) \, \mathscr{M}\psi(s-z)\,dz.
\end{align}
From the  definition and Plancherel's theorem, we have
\begin{align}\label{eq:dual-mellin}
  \int_0^\infty \langle\ph(t),\psi(t)\rangle\,\frac{dt}t
= \int_\R \langle\ph(e^x),\psi(e^x)\rangle \,dx
= \frac1{2\pi}\int_\R \langle\mathscr{M}\ph(i\si),\mathscr{M}\psi(-i\si)\rangle\,d\si,
\end{align}
where $\ph$ has values in $X$ and $\psi$ has values in $X'$.

\medskip\noindent We calculate some Mellin transform integrals.
\begin{lemma}\label{lem:mellin-ph}
Let $\beta>\al\ge0$ and $a\in\C\sm(-\infty,0]$ and put $\ph(t):=\frac{t^\al}{(a+t)^\beta}$, $t>0$. Then
\[
 \mathscr{M}\ph(s)=\frac{a^{s+\al-\beta}}{\Ga(\beta)}\Ga(s+\al)\Ga(\beta-\al-s)
\]
for $s\in\C$ with $\Rep s\in (-\al,\beta-\al)$. 
\end{lemma}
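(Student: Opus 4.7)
The plan is to reduce the claim to the classical Beta function identity by a change of variables, and then to handle the complex parameter $a$ by analytic continuation. By definition
\[
\mathscr{M}\varphi(s)=\int_0^\infty t^{s-1}\frac{t^\alpha}{(a+t)^\beta}\,dt
=\int_0^\infty \frac{t^{s+\alpha-1}}{(a+t)^\beta}\,dt,
\]
and the assumption $-\alpha<\Rep s<\beta-\alpha$ makes the integral absolutely convergent at both endpoints (at $0$ because $\Rep(s+\alpha)>0$, at $\infty$ because $\Rep(\beta-\alpha-s)>0$).

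First I would treat the case $a>0$. Substituting $t=au$ gives
\[
\mathscr{M}\varphi(s)=a^{s+\alpha-\beta}\int_0^\infty \frac{u^{s+\alpha-1}}{(1+u)^\beta}\,du
=a^{s+\alpha-\beta}\,B(s+\alpha,\beta-\alpha-s),
\]
and the classical identity $B(p,q)=\Gamma(p)\Gamma(q)/\Gamma(p+q)$ yields the formula.

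For general $a\in\C\setminus(-\infty,0]$, the cheapest route is analytic continuation in $a$. Both the left-hand side and the right-hand side of the claimed equality are holomorphic functions of $a$ on $\C\setminus(-\infty,0]$: the right-hand side is obvious (principal branch of $a^{s+\alpha-\beta}$), and for the left-hand side holomorphy follows from dominated convergence, using that for $a$ in any compact subset of $\C\setminus(-\infty,0]$ one has $|a+t|\gtrsim |a|+t$ uniformly, which gives an integrable majorant $t^{\Rep s+\alpha-1}(|a|+t)^{-\Rep\beta}$. Since the two sides agree on the ray $a>0$ by the previous step, they agree everywhere on $\C\setminus(-\infty,0]$ by the identity theorem.

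The only place that requires care is the uniform integrable majorant for the differentiation under the integral sign near the slit $(-\infty,0]$, but this is routine. Alternatively one could perform the substitution $t=au$ directly and rotate the contour from the ray $\arg u=-\arg a$ back to the positive real axis, using Cauchy's theorem and the fact that the integrand is holomorphic in $u$ on the corresponding sector (its only singularity is at $u=-1$, which lies outside) together with the decay at $0$ and $\infty$ coming from the strip condition on $s$; I expect the analytic continuation argument to be the shortest.
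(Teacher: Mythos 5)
Your proof is correct and follows essentially the same route as the paper's: reduce to the case $a>0$ by the scaling substitution $t=au$, identify the remaining integral as a Beta integral and apply $B(p,q)=\Gamma(p)\Gamma(q)/\Gamma(p+q)$, then extend to $a\in\C\setminus(-\infty,0]$ by analytic continuation. The paper merely orders the two scalar steps slightly differently (first $a=1$, then scaling) and is terser about the continuation, which you spell out more carefully.
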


\begin{proof}
  First let $a=1$.  We substitute $1+t=\frac1{1-x}$,
  $dt=\frac{dx}{(1-x)^2}$, $t=\frac{x}{1-x}$ and calculate
\[
 \int_0^\infty \frac{t^{s+\al}}{(1+t)^\beta}\,\frac{dt}t 
 = \int_0^1 \Big(\frac{x}{1-x}\Big)^{s+\al-1}(1-x)^{\beta}\,\frac{dx}{(1-x)^2}
 =\int_0^1 x^{s+\al-1}(1-x)^{\beta-(s+\al)-1}\,dx,
\]
which equals $\Ga(\beta)^{-1}\Ga(s+\al)\Ga(\beta-\al-s)$. For $a>0$ we have 
\[
  \int_0^\infty \frac{t^{s+\al}}{(a+t)^\beta}\,\frac{dt}t=\int_0^\infty \frac{(at)^{s+\al}}{(a+at)^\beta}\,\frac{dt}t
  = a^{s+\al-\beta}  \int_0^\infty \frac{t^{s+\al}}{(1+t)^\beta}\,\frac{dt}t,  
\]
and the general case follows by analytic continuation.
\end{proof}

\noindent In particular we have
\begin{align*}
 \mathscr{M}\ph(s)&=a^{s-1}\Ga(s)\Ga(1-s)=a^{s-1}\frac{\pi}{\sin(\pi s)},\quad 0<\Rep s<1\quad 
 \mbox{for}\quad \ph(t)=\frac{1}{a+t},\\
 \mathscr{M}\ph(s)&=a^{s-1}\Ga(s+1)\Ga(1-s)=a^{s-1}\frac{\pi s}{\sin(\pi s)},\quad -1<\Rep s<1\quad \mbox{for}\  \ph(t)=\frac{t}{(a+t)^2},\\
 \mathscr{M}\ph(s)&=\frac{a^{s-2}}2\Ga(s+1)\Ga(2-s)=\frac{a^{s-2}}2\frac{\pi s(1-s)}{\sin(\pi s)},\quad -1<\Rep s<2\quad 
 \mbox{for}\ \ph(t)=\frac{t}{(a+t)^3}.
\end{align*}
Observe that $\mathscr{M}\ph$ is meromorphic on $\C$ here with simple poles
at $k\in\Z$, $k\in\Z\sm\{0\}$, and $k\in\Z\sm\{0,1\}$, respectively.

\subsection*{Gamma function estimates}
We are particularly interested in the case $\al=\frac14$,
$\beta=\frac12$, i.e. $\ph(t)=\frac{t^{1/4}}{(a+t)^{\einhalb}}$, $t>0$, for
which we have
$\mathscr{M}\ph(s)=\frac{a^{s-1/4}}{\sqrt{\pi}}\Ga(\frac14+s)\Ga(\frac14-s)$. We
need an estimate for $s=i\si\in i\R$ and 

\begin{lemma}\label{lem:Gamma-funktion}
  We have
\[
  |\si\Ga(i\si)|^2=\frac{\pi\si}{\sinh(\pi\si)} \qquad\mbox{and}\qquad
  \mbox{$|\Ga(\frac12+i\si)|$}^2=\frac\pi{\cosh(\pi\si)},\qquad\si\in\R,
\]
and there exists a constant $C>0$ such that
\begin{align}\label{eq:Ga-14}
 C^{-1} \frac1{1+\si^2}\,\frac{\pi\si}{\sinh(\pi\si)} \le
  \mbox{$\Ga(\frac14+i\si)\Ga(\frac14-i\si)$}=|\mbox{$\Ga(\frac14+i\si)$}|^2\le
  C \frac{\pi\si}{\sinh(\pi\si)}, \quad \si\in\R.
\end{align}
\end{lemma}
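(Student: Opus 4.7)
The plan is to extract everything from the reflection formula $\Gamma(z)\Gamma(1-z) = \pi/\sin(\pi z)$, the functional equation $\Gamma(z+1) = z\Gamma(z)$, the symmetry $\Gamma(\overline{z}) = \overline{\Gamma(z)}$, and Stirling's asymptotic on vertical lines.

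For the first identity, I would apply the reflection formula at $z = i\sigma$ and rewrite $\Gamma(1-i\sigma) = -i\sigma\,\Gamma(-i\sigma) = -i\sigma\,\overline{\Gamma(i\sigma)}$ together with $\sin(\pi i\sigma) = i\sinh(\pi\sigma)$. This produces $-i\sigma\,|\Gamma(i\sigma)|^2 = \pi/(i\sinh\pi\sigma)$; multiplying through by $i\sigma$ yields exactly $\sigma^2|\Gamma(i\sigma)|^2 = \pi\sigma/\sinh(\pi\sigma)$, which is the first claim. The second identity is a one-liner: the reflection formula at $z = \frac12+i\sigma$ combined with $\sin(\frac\pi2 + i\pi\sigma) = \cos(i\pi\sigma) = \cosh(\pi\sigma)$ gives $|\Gamma(\frac12+i\sigma)|^2 = \Gamma(\frac12+i\sigma)\Gamma(\frac12-i\sigma) = \pi/\cosh(\pi\sigma)$ directly.

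For the two-sided estimate \eqref{eq:Ga-14}, the strategy is a continuity-plus-asymptotics argument. Both expressions are continuous, strictly positive, and even in $\sigma\in\R$ (note that $\sigma/\sinh(\pi\sigma)$ is even). Stirling's asymptotic on the vertical line $\mathrm{Re}\,z = \frac14$ gives
\[
|\Gamma(\tfrac14+i\sigma)|^2 \asymp |\sigma|^{-1/2}\,e^{-\pi|\sigma|} \qquad \text{and} \qquad \frac{\pi\sigma}{\sinh(\pi\sigma)} \asymp |\sigma|\,e^{-\pi|\sigma|}
\]
as $|\sigma|\to\infty$, so their quotient decays like $|\sigma|^{-3/2}$ at infinity. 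Since this quotient is continuous on $\R$ and takes the strictly positive value $|\Gamma(\frac14)|^2$ at $\sigma = 0$, it is bounded on $\R$, which is the upper bound in \eqref{eq:Ga-14}. Multiplying the quotient by $1+\sigma^2$ produces a continuous function that grows like $|\sigma|^{1/2}$ at infinity and equals $|\Gamma(\frac14)|^2 > 0$ at the origin; by continuity it is bounded below by a positive constant, which gives the lower bound.

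The only mild obstacles are the bookkeeping of $i$'s in part one and citing the correct form of Stirling on vertical lines in part three. An alternative that avoids Stirling directly would combine Legendre's duplication formula at $z=\tfrac14+i\sigma$ with the second identity to obtain the exact relation $|\Gamma(\frac14+i\sigma)|^2\,|\Gamma(\frac34+i\sigma)|^2 = 2\pi^2/\cosh(2\pi\sigma)$, reducing everything to two-sided bounds on $|\Gamma(\frac34+i\sigma)|^2$; but this still ultimately relies on Stirling, so the direct continuity argument above is cleaner.
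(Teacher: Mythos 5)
Your proof is correct. The two exact identities are obtained exactly as in the paper: the reflection formula $\Gamma(s)\Gamma(1-s)=\pi/\sin(\pi s)$, the functional equation, conjugation symmetry $\Gamma(\bar s)=\overline{\Gamma(s)}$, and $\sin(i\pi\si)=i\sinh(\pi\si)$, $\cos(i\pi\si)=\cosh(\pi\si)$; your bookkeeping of the $i$'s checks out. Where you genuinely diverge from the paper is the two-sided bound \eqref{eq:Ga-14}: you argue softly, noting that Stirling on the line $\Rep z=\tfrac14$ gives $|\Ga(\tfrac14+i\si)|^2\asymp|\si|^{-1/2}e^{-\pi|\si|}$ while $\pi\si/\sinh(\pi\si)\asymp|\si|e^{-\pi|\si|}$, so the quotient decays like $|\si|^{-3/2}$ and, being continuous and positive (Gamma has neither zeros nor poles on that line, and $\pi\si/\sinh(\pi\si)$ extends continuously to $1$ at $\si=0$), is bounded above; multiplying by $1+\si^2$ yields a continuous positive function tending to $+\infty$, hence bounded below, which is the lower estimate. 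The paper instead quotes an explicit classical inequality of Lerch from Nielsen's Handbuch, namely \eqref{eq:nielsen}, valid for all $\tau\in(0,1)$ and all $\si$, and simply sets $\tau=\tfrac14$. The trade-off: your compactness-plus-asymptotics argument is self-contained modulo the standard Stirling estimate on vertical lines but produces a non-explicit constant $C$ (which is all the lemma requires), whereas the paper's route gives explicit constants, covers the whole family $\tau\in(0,1)$ at once, and avoids asymptotic analysis entirely at the cost of invoking a less commonly cited reference. Your Legendre-duplication aside is a reasonable alternative but, as you note, does not remove the dependence on Stirling, so the direct argument you give is the right choice.
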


\begin{proof}
  Replacing $s$ in $\Ga(s)\Ga(1-s)=\frac\pi{\sin(\pi s)}$ by
  $s+\frac12$ yields
\[
  \mbox{$\Ga(\frac12+s)\Ga(\frac12-s)$}
  =\frac\pi{\sin(\pi s+\frac\pi2)}=\frac\pi{\cos(\pi s)},
\]
whereas using $\Ga(1-s)=-s\Ga(-s)$ yields
\[
 \Ga(s)\Ga(-s)=\frac{-\pi}{s\sin(\pi s)}.
\]
Now let $s=i\si$ and recall $\ov{\Ga(s)}=\Ga(\ov{s})$,
$\cos(is)=\cosh s$, and $i\sin(is)=-\sinh s$. 
For the proof \eqref{eq:Ga-14} we quote the following from \cite[\S 8 (10) on p. 24]{Nielsen}, the formula is attributed to Lerch there (observe that we have corrected the obvious typo): For $\tau\in(0,1)$, one has
\begin{align}\label{eq:nielsen}
 \Ga(1+\tau) \frac1{\sqrt{\tau^2+\si^2}}\, 
  \le |\Ga(\tau+i\si)|\, \left(\frac{\sinh(\pi\si)}{\pi\si}\right)^{\einhalb}
  \le \Ga(1+\tau) \frac{\sqrt{1+\si^2}}{\sqrt{\tau^2+\si^2}},\quad\si\in\R.
\end{align}
In particular, for $\tau\in(0,1)$ there exist constants $c_\tau,C_\tau>0$ such that
\begin{align}\label{eq:nielsen-cor}
 \frac{c_\tau}{\sqrt{1+\si^2}}\, \left(\frac{\pi\si}{\sinh(\pi\si)}\right)^{\einhalb}
 \le |\Ga(\tau+i\si)|\le C_\tau  \left(\frac{\pi\si}{\sinh(\pi\si)}\right)^{\einhalb},\quad\si\in\R,
\end{align}
and taking $\tau=\frac14$ proves \eqref{eq:Ga-14}.
\end{proof}

\medskip

\begin{remark}\rm
The proof of \eqref{eq:nielsen} given in \cite[\S8]{Nielsen} starts from the formula
\[
 \Ga(s)=\frac{e^{-\ga s}}{s}\,\prod_{n=1}^\infty \frac{e^{s/n}}{1+\frac{s}n},
\]
from which the real part of $\log\Ga(s)$ is calculated. 
\end{remark}

\medskip

\subsection*{Operators with BIP}
Now let $A$ be a sectorial operator of angle $\om_A\in[0,\pi)$ in $X$
with dense domain and range. Let $\ph\in H^\infty_0(\sect{\rho})$ with
$\int_0^\infty \ph(t)\,\frac{dt}t=1$ where $\rho\in(\om_A,\pi)$. Then we
have
\begin{align}
 \int_0^\infty \ph(tA)x\,\frac{dt}t=x, \quad x\in D(A)\cap R(A),
\end{align}
see, e.g., \cite[Thm 5.2.6]{fc-book}. For an arbitrary $\psi\in H^\infty_0(\sect{\rho})$
and $x\in D(A)\cap R(A)$ we therefore calculate
\begin{align}\label{eq:mellin-psiA}
 \int_0^\infty t^{i\si} \psi(tA)x\,\frac{dt}t = A^{-i\si} \int_0^\infty (tA)^{i\si}\psi(tA)x\,\frac{dt}t 
 = \mathscr{M}\psi(i\si) \, A^{-i\si}x,  
\end{align}
since, for $\psi_\si(t):=t^{i\si}\psi(t)$, we have $\int_0^\infty \psi_\si(t)\,\frac{dt}t=\mathscr{M}\psi(i\si)$ (observe 
$|\psi_\si(t)|=|\psi(t)|$ for $t>0$). This is already the basis for the following.

\begin{prop}\label{prop:BIP-sqf}
  Let $A$ be a sectorial operator of angle $\om_A\in[0,\pi)$ in $X$ with
  dense domain and range and $\ph(t):=\frac{t}{(1+t)^2}$,
  $t>0$. Then %Then $A$ has bounded imaginary powers in $X$ if and only if there is a $\ga>0$ such that
  \begin{aufzi}
  \item\label{item:BIP-sqf-phi}  $A$ has bounded imaginary powers (BIP) in $X$ if and only if
    there is a $\ga\ge0$ such that
    \begin{align}\label{eq:BIP-ga-x}
      \nn{t\mapsto \ph(tA)x}{\mathscr{F}_\ga(X)}\lesssim\nn{x}{X}.
    \end{align}
  \item\label{item:BIP-sqf-psi} If $A$ has BIP in $X$ with
    $\theta_A\in[0,\pi)$ and $\rho\in(\theta_A,\pi)$ then, for any
    $\psi\in H^\infty_0(\sect{\rho})\sm\{0\}$ and
    %, where $\rho\in(\om_A,\pi)$, and any $\tilde{\ga}\ge\ga$ and
    $q\in[1,\infty]$, the expression
\[
\Bignorm{\si\mapsto \int_0^\infty  t^{i\si}\psi(tA)x\,\frac{dt}t}_{L^q(\R;X)}
\]
yields an equivalent norm on $X$. Moreover, for $\theta\in(0,1)$ we
have $\dot{D}(A^\theta)=[X,\dot{D}(A)]_\theta$ and for
$\tilde{\psi}\neq0$ with
$z\mapsto z^{-\theta}\tilde\psi(z)\in H^\infty_0(\sect{\rho})$, an
equivalent norm on the complex interpolation space
$[X,\dot{D}(A)]_\theta$ is given by
\[
\bignorm{t\mapsto t^{-\theta}\tilde\psi(tA)x}_{\mathscr{F}(X)}.
\]
\end{aufzi}
\end{prop}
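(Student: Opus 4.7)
The plan is to use the key identity~\eqref{eq:mellin-psiA}, which states that
\[
 \int_0^\infty t^{i\si}\psi(tA)x\,\frac{dt}{t} \;=\; \mathscr{M}\psi(i\si)\,A^{-i\si}x, \qquad x\in D(A)\cap R(A),\; \psi\in H^\infty_0(\sect{\rho}).
\]
This turns every Littlewood--Paley expression into a product of a scalar Mellin multiplier and an imaginary power of $A$, after which the relevant estimates reduce to decay/growth bounds that are already at hand in the excerpt.

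For part~(a), I would first compute, via Lemma~\ref{lem:mellin-ph}, that $\mathscr{M}\phi(s)=\Ga(s+1)\Ga(1-s)=\pi s/\sin(\pi s)$ for $\phi(t)=t/(1+t)^2$, so $\mathscr{M}\phi(i\si)=\pi\si/\sinh(\pi\si)$, a positive scalar decaying at rate $\pi$. The implication ``$\Rightarrow$'' is then immediate: if $\nn{A^{-i\si}}{\cL(X)}\lesssim e^{\theta|\si|}$ for some $\theta\ge\theta_A$, any $\ga\ge(\theta-\pi)^+$ does the job. Conversely, the $\mathscr{F}_\ga$-bound and the fact that $\sinh(\pi\si)/(\pi\si)$ grows like $e^{\pi|\si|}$ give $\nn{A^{-i\si}x}{X}\lesssim e^{(\ga+\pi)|\si|}\nn{x}{X}$ on the dense set $D(A)\cap R(A)$; since $A^{-i\si}$ is closable, this extends to $X$, establishing BIP.

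For the first assertion of part~(b), I would deform the contour in $\mathscr{M}\psi(i\si)=\int_0^\infty t^{i\si}\psi(t)\frac{dt}{t}$ onto the ray $e^{\pm i(\rho-\eps)}(0,\infty)$; since $\psi\in H^\infty_0(\sect{\rho})$, this yields $|\mathscr{M}\psi(i\si)|\lesssim e^{-(\rho-\eps)|\si|}$, and together with $\nn{A^{-i\si}}{\cL(X)}\lesssim e^{(\theta_A+\eps)|\si|}$ and the choice $\rho>\theta_A$ one obtains
\[
 \nn{\mathscr{M}\psi(i\si)A^{-i\si}x}{X}\;\lesssim\; e^{-c|\si|}\nn{x}{X}
\]
for some $c>0$, which gives the $L^q(\R;X)$-upper bound for all $q\in[1,\infty]$. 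For the reverse inequality, choose $\eta\in H^\infty_0(\sect{\rho})$ with $\int_0^\infty \psi(t)\eta(t)\frac{dt}{t}=1$, so that $x=\int_0^\infty \eta(tA)\psi(tA)x\,\frac{dt}{t}$ on $D(A)\cap R(A)$; pairing with $x'\in X'$ and applying the Mellin--Plancherel identity~\eqref{eq:dual-mellin} yields
\[
 |\langle x,x'\rangle| \;\lesssim\; \int_\R \nn{\mathscr{M}\psi(i\si)A^{-i\si}x}{X}\,|\mathscr{M}\eta(-i\si)|\,\nn{(A')^{i\si}x'}{X'}\,d\si,
\]
and H\"older combined with the upper bound applied to the dual operator $A'$ (which has BIP of the same angle on $X'$) gives $\nn{x}{X}\lesssim \nn{\si\mapsto\mathscr{M}\psi(i\si)A^{-i\si}x}{L^q(\R;X)}$.

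For the remaining claims in~(b) I would invoke Proposition~\ref{prop:cplx-interpol-norm} with $(X_0,X_1)=(X,\dot D(A))$. Writing $\tilde\psi_\theta(z):=z^{-\theta}\tilde\psi(z)\in H^\infty_0(\sect{\rho})$, the identity $t^{-\theta}\tilde\psi(tA)=\tilde\psi_\theta(tA)A^\theta$ combined with~\eqref{eq:mellin-psiA} produces
\[
 \nn{t\mapsto t^{-\theta}\tilde\psi(tA)x}{\mathscr{F}(X)} \;=\; \sup_\si \nn{\mathscr{M}\tilde\psi_\theta(i\si)A^{-i\si}A^\theta x}{X} \;\simeq\; \nn{A^\theta x}{X}
\]
by the first claim of~(b) with $q=\infty$. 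The analogous computation in $\dot D(A)$, using $t^{1-\theta}\tilde\psi(tA)=(tA)\tilde\psi_\theta(tA)A^{\theta-1}$, reduces the $\mathscr{F}(\dot D(A))$-quantity to an equivalent expression. Normalising $\tilde\psi$ so that $\int_0^\infty\tilde\psi(t)\frac{dt}{t}=1$ makes $\varphi(t):=\tilde\psi(tA)x$ reproduce $x$, and Proposition~\ref{prop:cplx-interpol-norm} then identifies $[X,\dot D(A)]_\theta$ with $\dot D(A^\theta)$ and the displayed expression as an equivalent norm.

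The main obstacle I anticipate is technical: verifying the pointwise decay \eqref{eq:C-j} required for membership in $\mathscr{C}_0(X,\dot D(A))$, particularly for the $\dot D(A)$-component, since $z\tilde\psi_\theta(z)$ need not itself lie in $H^\infty_0(\sect{\rho})$. This is most cleanly handled by first smoothing $x$ through the approximants $\vrh_n(A)$ from~\eqref{eq:def-rho-n} -- which bring in an extra decaying factor at $0$ and $\infty$ -- and then passing to the limit by dominated convergence, a device used repeatedly in the earlier sections.
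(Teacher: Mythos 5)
Parts (a) and the first assertion of (b) are correct and essentially the paper's own argument: everything is reduced to the identity \eqref{eq:mellin-psiA}, the decay $|\mathscr{M}\psi(i\si)|\lesssim e^{-\rho'|\si|}$ is obtained by rotating the ray of integration (the paper does exactly this via dilation invariance and minimizing over $\arg z$), and this is combined with the BIP bound. Your lower bound is a genuine variant: instead of the paper's elementary observation that $|\mathscr{M}\psi|\ge\del$ on some interval together with $\n{A^{-i\si}x}\ge M^{-1}e^{-\theta|\si|}\n{x}$, you use a Calder\'on pair $(\psi,\eta)$, the Plancherel identity \eqref{eq:dual-mellin} and duality; this works (take $\eta(z)=c\,\overline{\psi(\overline{z})}$ to guarantee $\int_0^\infty\psi\eta\,\frac{dt}t\neq0$) and only costs the adjoint bound $\n{(A^{i\si})'}=\n{A^{i\si}}$, so it is a legitimate, slightly less elementary alternative. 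Two small slips: in (a) the factor $\pi\si/\sinh(\pi\si)$ gives $e^{-\pi|\si|}$ only up to a polynomial, so in the borderline case $\theta\ge\pi$ you need $\ga$ strictly larger than $\theta-\pi$; and the extension of the converse bound from $D(A)\cap R(A)$ to $X$ uses that $A^{i\si}$ is closed with this set as a core, not mere closability.

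The genuine gap is in the final claims of (b). The paper does not reprove $\dot{D}(A^\theta)=[X,\dot{D}(A)]_\theta$; it quotes \cite[Theorem 15.28]{KuW:levico} (the homogeneous version of \cite[1.15.3]{Triebel}) and then only verifies the norm formula via the first claim of (b) with $q=\infty$. You instead try to extract the identification from Proposition~\ref{prop:cplx-interpol-norm}, but your construction only yields one inclusion: exhibiting the representation $\ph(t)=\tilde\psi(tA)x$ with both the $\mathscr{F}(X)$- and $\mathscr{F}(\dot{D}(A))$-quantities $\simeq\n{A^\theta x}$ gives $\dot{D}(A^\theta)\hookrightarrow[X,\dot{D}(A)]_\theta$. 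The converse embedding --- that an arbitrary $x$ admitting \emph{some} admissible representation $\ph$ satisfies $\n{A^\theta x}\lesssim\max_{j}\nn{t\mapsto t^{j-\theta}\ph(t)}{\mathscr{F}(X_j)}$ --- is precisely where BIP is needed (classically via a three-lines argument applied to $z\mapsto A^{z}f(z)$, or its Mellin analogue), and nothing in your sketch addresses it; it is the substance of the cited theorem, not a formal consequence of Proposition~\ref{prop:cplx-interpol-norm}. Either supply that argument or cite the theorem as the paper does. Two further repairs for the inclusion you do prove: ``normalising $\tilde\psi$'' is not possible in general, since for an admissible $\tilde\psi$ the integral $\int_0^\infty\tilde\psi(t)\,\frac{dt}t$ need not converge absolutely and may vanish; and, as you note, $z^{1-\theta}\tilde\psi(z)$ need not lie in $H^\infty_0(\sect{\rho})$. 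Both problems disappear if you run the inclusion with one fixed, well-decaying choice of $\tilde\psi$ (which also settles the \eqref{eq:C-j} decay without the $\vrh_n(A)$-smoothing detour) and then transfer to general $\tilde\psi$ through the already-established norm equivalence from the first claim of (b).
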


\begin{proof}
\ref{item:BIP-sqf-phi} By the calculations above we have, for $x\in D(A)\cap R(A)$ and $\si\in\R$,
\[
 \int_0^\infty t^{i\si} \ph(tA)x\,\frac{dt}t=\frac{\pi i\si}{\sin(\pi i\si)}A^{-i\si}x=\frac{\pi\si}{\sinh(\pi\si)}A^{-i\si}x.
\]
If $A$ has BIP with in $X$ with $\n{A^{i\si}}\lesssim e^{\om|\si|}$
then \eqref{eq:BIP-ga-x} holds with $\ga=0$ if $\om<\pi$, and with
$\ga>\om-\pi$ if $\om\ge\pi$. Conversely, if \eqref{eq:BIP-ga-x} holds
then
\[
 \n{A^{-i\si}x}\lesssim e^{\ga|\si|}\frac{\sinh(\pi\si)}{\pi\si}\n{x}\lesssim e^{(\pi+\ga)|\si|}\n{x},
\]
and $A$ has BIP.

\ref{item:BIP-sqf-psi} We have to estimate the right hand side of
\eqref{eq:mellin-psiA}. We choose $\theta\in(\theta_A,\rho)$ and find
$M>0$ such that $\n{A^{i\si}x}\le Me^{\theta|\si|}\n{x}$ for all
$\si\in\R$ and $x\in X$.  For the Mellin transform of $\psi$ we
observe that dilation invariance of the integral and analytic
continuation yield
\[
  \mathscr{M}\psi(i\si)
  =\int_0^\infty t^{i\si} \psi(t)\,\frac{dt}t
  =\int_0^\infty (zt)^{i\si}\psi(zt)\,\frac{dt}t
\]
for any $z\in\sect{\rho}$. Now the right hand side equals
\begin{align*}
 e^{-\si\arg z}\int_0^\infty (|z|t)^{i\si} \psi(zt)\,\frac{dt}t
 =  e^{-\si\arg z}\int_0^\infty t^{i\si} \psi(\mbox{$\frac{z}{|z|}$}t)\,\frac{dt}t,
\end{align*}
where we used dilation invariance again. By assumption on $\psi$ we have, for some $C,\eps>0$,
\begin{align*}
 \left|\int_0^\infty t^{i\si} \psi(\mbox{$\frac{z}{|z|}$}t)\,\frac{dt}t\right| 
 \le C \int_0^\infty\min\{t^\eps,t^{-\eps}\}\,\frac{dt}t=:C_\psi<\infty.
\end{align*} 
Minimizing with respect to $\arg z$ we thus have
\begin{align*}
 |\mathscr{M}\psi(i\si)|\le&\  C_\psi\,e^{-\rho|\si|},\quad\si\in\R, \\
 \Bignorm{\int_0^\infty t^{i\si}\psi(tA)x\,\frac{dt}t}\le &\ C_\psi M e^{-(\rho-\theta)|\si|}\n{x},\quad\si\in\R,
\end{align*}
and conclude
\[
\Bignorm{\si\mapsto \int_0^\infty  t^{i\si}\psi(tA)x\,\frac{dt}t}_{L^q(\R;X)}\lesssim\n{x}.
\]
For the inverse estimate we first note
\begin{align}\label{eq:BIP-below}
  \n{A^{-i\si}x}\ge\frac1M\,e^{-\theta|\si|}\n{x},\quad\si\in\R, x\in X.
\end{align}
Then we find by $\psi\neq0$ a $\si_0\in\R$ with
$\mathscr{M}\psi(i\si_0)\neq0$ and by continuity of $\mathscr{M}\psi$
a bounded open interval $I$ around $\si_0$ and $\del>0$ such that
$|\mathscr{M}\psi(i\si)|\ge\del$ for $\si\in I$.  Invoking also
\eqref{eq:BIP-below} we hence have, for any $q\in[1,\infty]$,
\begin{align*}
 \Bignorm{\si\mapsto \int_0^\infty  t^{i\si}\psi(tA)x\,\frac{dt}t}_{L^q(\R;X)}
 \ge &\ \nn{\si\mapsto \mathscr{M}(i\si)A^{-i\si}x}{L^q(I;X)}  \\
 \ge &\ |I|^{1/q}\inf_{\si\in I}\left(|\mathscr{M}\psi(i\si)|\,\n{A^{-i\si}x}\right) \\
 \ge &\ \bigl(\del|I|^{1/q}\frac1M \min_{\si\in\ov{I}}e^{-\theta|\si|}\bigr) \,\n{x}.
\end{align*} 
We conclude
\[
 \n{x}\lesssim\Bignorm{\si\mapsto \int_0^\infty  t^{i\si}\psi(tA)x\,\frac{dt}t}_{L^q(\R;X)}.
\]
For the proof of the last assertion we fix $\theta\in(0,1)$ and $\tilde\psi$ as in the assumption. Then $\psi(z):=z^{-\theta}\tilde\psi(z)$ defines a function $\psi\in H^\infty_0(\sect{\rho})\sm\{0\}$. For $x\in D(A^\theta)$ we then have
\[ 
 t^{-\theta}\tilde\psi(tA)x=(tA)^{-\theta}\tilde\psi(tA)A^\theta x=\psi(tA)A^\theta x,
\] 
hence, for $x$ sufficiently nice, 
\[
\nn{t\mapsto t^{-\theta}\tilde\psi(tA)x}{\mathscr{F}(X)}
=\Bignorm{\si\mapsto \int_0^\infty t^{i\si}\psi(tA)A^\theta x\,\frac{dt}t}_{L^\infty(\R;X)}\simeq\n{A^\theta x}.
\]
We recall that $[X,\dot{D}(A)]_\theta=\dot{D}(A^\theta)$ holds by \cite[Theorem 15.28]{KuW:levico}, the homogeneous version of \cite[1.15.3 Theorem]{Triebel}. 
\end{proof}

\providecommand{\bysame}{\leavevmode\hbox to3em{\hrulefill}\thinspace}

\end{document}